\newtheorem{theorem}{Theorem}[section]
\newtheorem{lemma}[theorem]{Lemma}
\newtheorem{corollary}[theorem]{Corollary}
\newtheorem{proposition}[theorem]{Proposition}
\theoremstyle{definition}
\newtheorem{remark}[theorem]{Remark}
\newtheorem{example}[theorem]{Example}
\numberwithin{equation}{section}
\theoremstyle{plain}    
\numberwithin{equation}{section} 
\numberwithin{figure}{section} 
\theoremstyle{plain}    
\theoremstyle{plain}    
\theoremstyle{remark}    
\newtheorem*{acknowledgement*}{Acknowledgement} 
\newcommand{\cD}{{\mathcal D}}
\newcommand{\cL}{{\mathcal L}}
\newcommand{\cS}{{\mathcal S}}
\newcommand{\cT}{{\mathcal T}}
\newcommand{\cW}{{\mathcal W}}
\newcommand{\cY}{{\mathcal Y}}
\newcommand{\cZ}{{\mathcal Z}}
\newcommand{\te}{{\theta}}
\newcommand{\Te}{{\Theta}}
\newcommand{\ve}{{\varepsilon}}
\newcommand{\del}{{\delta}}
\newcommand{\gam}{{\gamma}}
\newcommand{\Gam}{{\Gamma}}
\newcommand{\sig}{{\sigma}}
\newcommand{\al}{{\alpha}}
\newcommand{\Up}{{\Upsilon}}
\newcommand{\bbR}{{\mathbb R}}
\newcommand{\bbZ}{{\mathbb Z}}
\newcommand{\bbI}{{\mathbb I}}
\newcommand{\bfP}{{\mathbf P}}
\newcommand{\bfZ}{{\mathbf Z}}
\begin{document}
\title[]{Tails of polynomials of random variables and stable limits 
for nonconventional sums}%
\vskip 0.1cm 
\author{ Yuri Kifer
\quad\quad and\quad\quad\quad\quad S.R.S. Varadhan\\
\vskip 0.1cm
Institute of Mathematics\quad\quad\quad Courant Institute\\
The Hebrew University\quad\quad\quad New York University\\
Jerusalem, Israel\quad\quad\quad\quad\quad\quad New York, USA}%
\address{
Institute of Mathematics, The Hebrew University, Jerusalem 91904, Israel}
\email{ kifer@math.huji.ac.il}
\address{
Courant Institute for Mathematical Studies, New York University, 
251 Mercer St, New York, NY 10012, USA}%
\email{ varadhan@cims.nyu.edu}%

\thanks{Yu. Kifer was supported by ISF grant 82/10 and Einstein Foundation,
Berlin grant A 2012 137, S.R.S. Varadhan was supported by NSF grant DMS  
1208334}
\subjclass[2000]{Primary: 60F17 Secondary: 60E07, 60G52}%
\keywords{stable distributions, heavy tails, limit theorems, Levi process, 
nonconventional
 sums.}%
\dedicatory{  }
\date{\today}
\begin{abstract}\noindent
First, we obtain decay rates of probabilities of tails of 
polynomials in  several  independent random variables with heavy tails. 
Then we derive stable limit theorems for  sums of the form
 $\sum_{Nt\geq n\geq 1}F\big(X_{q_1(n)},\ldots,X_{q_\ell(n)}\big)$ where $F$ 
 is a polynomial, $q_i(n)$ is either $n-1+i$ or $ni$  and $X_n,n\geq 0$ is
 a sequence of independent  identically distributed random variables with heavy
  tails.  Our results can be viewed as an extension to the heavy tails
 case of the nonconventional functional central limit theorem from \cite{KV1}.

\end{abstract}
\maketitle
\markboth{Yu.Kifer and S.R.S.Varadhan}{Nonconventional stable limit theorems} 
\renewcommand{\theequation}{\arabic{section}.\arabic{equation}}
\pagenumbering{arabic}

\renewcommand{\theequation}{\arabic{section}.\arabic{equation}}
\pagenumbering{arabic}

\section{Introduction}\label{sec1}\setcounter{equation}{0}

Nonconventional ergodic theorems (with the name coming from \cite{Fur}) 
motivated originally by multiple recurrence problems have attracted much 
attention  during the last 30 years. Probabilistic limit theorems for
corresponding expressions have appeared more recently, in particular,
a functional central limit theorem for nonconventional sums of the form
\begin{equation}\label{1.1}
S_N(t)=\sum_{Nt\geq n\geq 1}F\big(X_{q_1(n)},\ldots,X_{q_\ell(n)})
\end{equation}
was obtained in \cite{KV1} for sequences of random variables $X_1,X_2,...$
with weak dependence. In this paper we consider polynomial functions
\begin{equation}\label{1.2}
F(x_1,...,x_\ell)=\sum_{\sigma_1,\ldots,\sigma_\ell}h_{\sigma_1\cdots 
\sigma_\ell} \,x_1^{\sigma_1}\cdots x_\ell^{\sigma_\ell},
\end{equation}
where the sum is taken over a finite set of nonnegative integer indexes,
and study  the tail probabilities of $F(X_1,X_2,...,X_\ell)$ for independent
random variables $X_i,\, i\geq 1$ with heavy tails. Then we obtain various
 results concerning convergence of distributions of properly normalized and
 centralized sums of the form (\ref{1.1}) where for all $i$'s either
  $q_i(n)=n-1+i$ or $q_i(n)=in$, $F$ has the form (\ref{1.2}) and  
  $X_i$'s are independent and have identical distributions with heavy tails. 
  
  We start with random variables $X$ having the same asymptotical
tail behavior as stable distributions, i.e.
\begin{equation}\label{1.3}
\bfP\{\pm X>x\}\sim c^\pm x^{-\al}\,\,\mbox{as}\,\, x\to\infty
\end{equation}
where $0<\al <2$ and $c^\pm\geq 0$ with $c^++c^->0$. Then we consider the
minimal class which contains all such random variables and which is closed
with respect to sums and products of independent random variables with such
tails. It turns out that this class consists of random variables $X$ with 
tails
\begin{equation}\label{1.4}
P\{\pm X>x\}\sim c^\pm x^{-\al}(\ln x)^k\,\,\mbox{as}\,\, x\to\infty
\end{equation}
where $k\geq 0$ is an integer. 

Namely, if we start with $\ell$ independent random variables $X_1,...,X_\ell$
with tails of the form
\begin{equation}\label{1.5}
P\{\pm X_i>x\}\sim c_i^\pm x^{-\al_i}(\ln x)^{k_i}\,\,\mbox{as}\,\, x\to
\infty,
\end{equation}
where $0<\al<2$, $k_i\geq 0$ are some integers and $c^\pm\ge 0$, $c^+_i+c^-_i>0$,
then the polynomial $Y=F(X_1,...,X_\ell)$, with $F$ given by (\ref{1.2}), will 
have the tail behavior
\begin{equation}\label{1.6}
P\{\pm Y>x\}\sim  c_\ast^\pm x^{-\al_\ast}(\ln x)^{k_\ast}\,\,\mbox{as}\,\,
 x\to\infty
\end{equation}
where $c_\ast^+ +c_\ast^->0$, $0<\al_\ast <2 $ and both $\alpha_*$ and an 
integer $k_\ast\geq 0$  can be explicitly described. Actually, this holds
true for a somewhat larger class of random variables $X_i$ having tail
distributions satisfying (\ref{1.5}) with any real  $k_i\geq 0$, and so we
obtain our results for the latter. Allowing 
negative integer values for $k$ will generate tail behavior that contains 
additional $\ln\ln x$ terms. In fact, our method works directly also for
$F$ given by a finite sum (\ref{1.2}) with arbitrary real nonnegative
$\sig_1,...,\sig_\ell$ provided random variables $X_i$'s are nonnegative
to make sense of their arbitrary real powers.

Next, we consider nonconventional sums (\ref{1.1}) where $F$ is a polynomial
of the form (\ref{1.2}), \hbox{$\{ X_i,\, i\geq 1\}$ } is a sequence of 
independent identically distributed (i.i.d.) random variables with the tail 
behavior given by (\ref{1.4}). Concerning integer valued functions 
$1\leq q_1(n)<q_2(n)<\cdots <q_\ell(n)$ we will concentrate on two important
 cases, namely, the $\ell$-dependence case $q_j(n)=n+j-1$ and the long range 
 dependence
arithmetic progression case  $q_j(n)=jn$ which leads to an interesting 
comparison
of  results for these two cases. After obtaining the tail behavior of the 
form
(\ref{1.6}) for summands of $S_N$ we proceed to establishing limit theorems 
for
$S_N$ showing that for a particular class of polynomials $F$ and certain
sequences $a_N$ and $b_N$,
\[
\frac 1{b_N}(S_N(t)-Nta_N)
\]
weakly converges in the Skorokhod $J_1$ topology to an $\al_\ast$-stable 
L\' evy
process. On the other hand, for general polynomials $F$  in the 
$\ell$-dependence case $q_i(n)=n+j-1$ the convergence in $J_1$ may not
 hold true though finite dimensional distributions always weakly converge.
 In the arithmetic progression case $q_j(n)=jn$,  while the convergence in 
 $J_1$ takes place for any polynomial $F$, the limiting process in some cases 
 may have dependent increments.

In the $\ell$-dependence case the summands form a stationary sequence, and so
after verifying corresponding conditions it is possible to rely on previous
results (see \cite{TK}). On the other hand, in the arithmetic progression case $q_j(n)=jn$
the sum $S_N$ consists of  summands  with a strong long range
dependence which do not form a stationary sequence. This does not enable
us to rely directly on existing results concerning stable limit theorems for
 sums of stationary weakly dependent random variables 
(see, for instance \cite{DR}, \cite{Da}, \cite{DJ}, \cite{TK} and references
 there). Some of our results can be extended to a wider class of 
random variables which have tail behavior given by (\ref{1.4}) with more 
general slowly varying  functions in place of powers of the 
 logarithm but then most of our explicit computations will not be available.

 \section{Preliminaries and main results}\label{sec2}\setcounter{equation}{0}

Let $X_1,...,X_\ell$ be independent random variables with tail distribution
satisfying
\begin{equation}\label{2.1}
\lim_{z\to\infty}z^{\al_j}(\ln z)^{-k_j}P\{\pm X_j>z\}=c_j^\pm,\, 
j=1,...,\ell
\end{equation} 
for some $\al_j\in(0,2),\, k_j\geq 0$ and $c_j^\pm\geq 0$ with
$c_j^++c_j^->0$, $j=1,...,\ell$. We will often consider a space $\mathcal W$
of bounded continuous functions $W$ defined on some $m$-dimensional
Euclidean space $R^m$ and satisfying
\begin{equation}\label{2.1+}
|W(x)|\le C|x|^2(1+|x|^2)^{-1}
\end{equation}
for some constant $C>0$. It is known (see Theorem 3.6 of \cite{Re} and 
Lemma \ref{l5.1} in Appendix) that (\ref{2.1}) holds true if and only if
\begin{equation}\label{2.2}
\lim_{z\to\infty}z^{\al_j}(\ln z)^{-k_j}E[ W(\frac{X_j}{z})]=\int W(x)
(c^j_-{\bf 1}_{x<0}+c^j_+{\bf 1}_{x<0})\frac{dx}{|x|^{1+\alpha_j}}
\end{equation} 
for all $W\in\mathcal W$ (here $m=1$) and $j=1,\ldots,\ell$.

 First, we want to establish the tail behavior of random variables
  $Y=F(X_1,...,X_\ell)$  where $F$ is a  polynomial of the form
\begin{equation}\label{2.2+}
F(x_1,...,x_\ell)=\sum_{\theta\in\Theta} h_\theta g_\theta(\bf x)  
\end{equation}
where $\theta=(\sigma_1,\ldots,\sigma_\ell)$  is a multi index from a finite 
set $\Te$ of multi indices of integers $\sigma_i> 0$ and $g_\theta(\bf x)=
x_1^{\sigma_1}\cdots x_\ell^{\sigma_\ell}$ is a monomial.
We will also assume that $\Theta$  consists only of those $\theta$ for which 
$h_\theta\neq 0$.

Introduce the following notations
\begin{eqnarray*}
&\alpha(\theta)=\min_{1\leq j\leq\ell}\frac {\al_j}{\sigma_j},
\, J(\theta)=\{ j: \frac{\al_j}{\sigma_j}=\alpha(\theta) \},\, p(\theta)=
|J(\theta)|=\#\{ j: \frac{\al_j}{\sigma_j}=\alpha(\theta)\}\nonumber\\
&k(\theta)=p(\theta)-1+\sum_{j\in J(\theta)} k_j,\,\alpha_*=\min_{\theta\in
\Theta} \alpha(\theta),\, k_*=\max_{\theta\in \Theta, \alpha(\theta)=
\alpha_*} k(\theta)\nonumber\\
&\mbox{and}\,\,\Theta_*=\{ \theta:\,\alpha(\theta)=\alpha_* \,\,\mbox {and} 
\, \,k(\theta)=k_*\}.\nonumber
\end{eqnarray*}
Now we can state our first main result. Consider the collection of random 
variables $\{g_\theta(X_1,\ldots, X_\ell)\}$
and view them as a random vector $\bf Z$ in $R^m$ where $m$ is the cardinality 
of $\Theta$. 

\begin{theorem}\label{thm2.1} (i) The limit
\begin{equation}\label{2.3}
\lim_{z\to\infty} z^{\alpha_*} (\ln z)^{-k_*} E[W(\frac{\bf Z}{z})]=
\int_0^\infty \int_{S^{m-1}}W(rs)\frac{\nu(ds)\, dr}{r^{1+\alpha_*}}
\end{equation}
exists for any $W\in\mathcal W$ with some measure $\nu$ on the sphere 
$S^{m-1}$
satisfying $\nu(S^{m-1})>0$. Moreover, let $m_*$ be the cardinality of 
$\Theta_*$ and
$R^{m_*}$ be any $m_*$-dimensional subspace containing the support of the 
distribution
of the random vector $\{ g^*_\theta(X_1,...,X_\ell),\,\theta\in\Theta\}$ where
$g^*_\theta=g_\theta$ if $\theta\in\Theta_*$ and $g_\theta^*\equiv 0$ if 
$\theta\in\Theta\setminus\Theta_*$.
Then $\nu\big(S^{m-1}\setminus (R^{m_*}\cap S^{m-1})\big)=0$.

(ii) The tail behavior of the polynomial $Y=F(X_1,...,X_\ell)$ is given by
\begin{equation}\label{2.4}
\lim_{z\to\infty} z^{\al_*}(\ln z)^{-k_*}P\{\pm Y>z\}=c^F_\pm
\end{equation}
where $c_F^\pm\geq 0$ satisfies $c_F^++c_F^->0$. 
\end{theorem}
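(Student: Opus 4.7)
The plan is to first establish part (i) and deduce (ii) by specialising (\ref{2.3}) to the one-dimensional test function $W(\mathbf{y})=\chi(\langle\mathbf{h},\mathbf{y}\rangle)$ with $\chi\in\cW$ and $\mathbf{h}=(h_\theta)_{\theta\in\Theta}$, noting that $Y=\langle\mathbf{h},\mathbf{Z}\rangle$ and that such composites still lie in $\cW$. The equivalence between tail asymptotics and test-function limits stated for (\ref{2.1})--(\ref{2.2}) (Theorem 3.6 of \cite{Re} together with the Appendix lemma) then converts (\ref{2.3}) into (\ref{2.4}). Positivity $c_F^++c_F^->0$ follows because $\mathbf{h}$ does not vanish on the support of $\nu$: at least one $h_\theta$ with $\theta\in\Theta_*$ is nonzero, and $\nu$ will be shown to live on $R^{m_*}\cap S^{m-1}$, the span of the dominant monomials.

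The heart of the argument is a scaling analysis of each monomial. Setting $Y_j=|X_j|^{\alpha_j}$, assumption (\ref{2.1}) gives $\bfP\{Y_j>y\}\sim C_j\, y^{-1}(\ln y)^{k_j}$. Factor
\[
|g_\theta(\mathbf{X})|^{\alpha(\theta)}=\prod_{j\in J(\theta)} Y_j\;\cdot\;\prod_{j\notin J(\theta)}|X_j|^{\sigma_j\alpha(\theta)},
\]
and observe that for $j\notin J(\theta)$ one has $\sigma_j\alpha(\theta)<\alpha_j$, so $|X_j|^{\sigma_j\alpha(\theta)}$ has a finite moment of some order strictly greater than one. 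A Breiman-type lemma then reduces the tail analysis to the product $\prod_{j\in J(\theta)}Y_j$ of $p(\theta)$ independent variables with tails of the form $y^{-1}(\ln y)^{k_j}$; an induction via convolution (equivalently, a Mellin-transform computation) shows that this product has tail $\sim Cy^{-1}(\ln y)^{p(\theta)-1+\sum_{j\in J(\theta)}k_j}$, yielding exactly the exponent $k(\theta)$. Hence $\bfP\{\pm g_\theta(\mathbf{X})>z\}\sim c_\theta^\pm z^{-\alpha(\theta)}(\ln z)^{k(\theta)}$. For $\theta\notin\Theta_*$ this is $o(z^{-\alpha_*}(\ln z)^{k_*})$, so the corresponding coordinates of $\mathbf{Z}/z$ vanish in the limit (\ref{2.3}) and force $\nu$ to be supported on $R^{m_*}\cap S^{m-1}$. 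For $\theta\in\Theta_*$, rescaling the driving variables $X_j$ with $j\in\bigcup_{\theta\in\Theta_*}J(\theta)$ into polar form and keeping the remaining $X_j$'s bounded yields the vague convergence of $z^{\alpha_*}(\ln z)^{-k_*}\bfP\{\mathbf{Z}/z\in\cdot\}$ on $R^m\setminus\{0\}$ to a measure of product form $\nu(ds)\,dr/r^{1+\alpha_*}$, with $\nu$ explicitly identified as the pushforward of the product of limiting stable measures of the individual $X_j$'s under the polynomial map $\{g^*_\theta\}_{\theta\in\Theta_*}$.

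The main obstacle is treating polynomials whose dominant monomials share only partial sets of driving variables, so that the joint image $(g_\theta(\mathbf{X}))_{\theta\in\Theta_*}$ sits on a non-linear subvariety of $R^{m_*}$ rather than a single ray. One must verify that the correct logarithmic exponent is $k_*=\max_{\theta\in\Theta_*}k(\theta)$ and not a sum of the $k(\theta)$'s: the events where two genuinely different subsets $J(\theta),J(\theta')$ are simultaneously driven to be of order $z$ must be shown to contribute at a strictly lower order than $z^{-\alpha_*}(\ln z)^{k_*}$, after accounting for the extra independent factors that must be large. A clean route is induction on $|\bigcup_{\theta\in\Theta_*}J(\theta)|$, combined with a truncation separating the driving variables from the rest and an inclusion-exclusion decomposition of the events $\{|g_\theta(\mathbf{X})|\geq\epsilon z\}$ over $\theta\in\Theta_*$, which simultaneously yields convergence to the limiting measure and the correct normalisation.
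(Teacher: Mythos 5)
Your high-level strategy coincides with the paper's: establish the asymptotics for a single monomial by combining a Breiman argument (for the factors with strictly smaller $\alpha_j/\sigma_j$) with a convolution/Mellin computation on $\prod_{j\in J(\theta)}Y_j$, show the cross-monomial events are negligible so that the joint Lévy measure $\nu$ is concentrated on the dominant subspace, and deduce (ii) from (i) by projecting along the linear map $\mathbf{z}\mapsto\langle\mathbf{h},\mathbf{z}\rangle$. You have also correctly identified the main obstacle: one must show that two monomials with distinct dominant index sets $J(\theta)\ne J(\theta')$ cannot be simultaneously of order $z$ without forcing extra independent factors to be large.

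The gap is that this obstacle is left unresolved. The ``clean route'' you invoke (induction on $|\bigcup J(\theta)|$ plus truncation plus inclusion--exclusion) is stated, not executed, and it is not at all clear it closes without considerable extra bookkeeping. The paper instead uses a short concrete device (Lemma~\ref{lem4.3}): writing $Y=g_{\theta_1}=V_1V_2$ and $Z=g_{\theta_2}=V_2V_3$ with $V_2$ the product of the shared dominant factors, it bounds $P\{|Y|>z,\,|Z|>z\}\le P\{|YZ|^{1/2}>z\}=P\{|V_1V_3|^{1/2}|V_2|>z\}$ and observes that this is governed by a strictly larger effective $\alpha$ (or fewer dominant factors), hence decays faster than $z^{-\alpha_*}(\ln z)^{k_*}$ by the already-established product estimates; this is precisely what lets Lemma~\ref{lem4.1} localize $\nu$. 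You should supply an argument of comparable force. A second, less serious, imprecision is the identification of $\nu$ as ``the pushforward of the product of limiting stable measures of the individual $X_j$'s under $\{g_\theta^*\}$'': that pushforward is not a finite measure (the product of two Lévy measures pushed forward by $(x_1,x_2)\mapsto x_1x_2$ diverges logarithmically, which is exactly where the $(\ln z)^{k_*}$ factor comes from). The correct description (Lemma~\ref{lem4.2}) factors each dominant monomial as $V_iY_i$, with $V_i$ the scalar product of the $J$-factors and $Y_i$ the independent remaining piece; $\nu$ restricted to the corresponding subspace is then the actual law of $Y_i$ weighted by $|y|^{\alpha_*}$ and decorated by the constants $c_\pm$ from $V_i$, not a product of stable measures. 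Finally, passing to $Y_j=|X_j|^{\alpha_j}$ discards the signs; one must separately track the sign configuration (as the paper does in Proposition~\ref{prop3.1} and Corollary~\ref{cor3.3}) to recover the signed constants $c_F^\pm$ and, in turn, $c_F^++c_F^->0$.
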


Theorem \ref{thm2.1} claims, in particular, that the class of random variables
satisfying (\ref{2.1}) is closed with respect to taking products and sums of 
products of independent random variables. We will prove Theorem \ref{thm2.1}
by establishing first the joint tail behavior of the collection of monomials 
$\{g_\theta (X_1,...,X_\ell),\,\theta\in\Theta\}$ which is the assertion (i)
and deduce from it the tail behavior  of the linear combination    
$\sum_{\theta\in\Theta} h_\theta g_\theta({X_1,...,X_\ell})$ which is the
assertion (ii) of the theorem. 

It is natural to inquire whether the wider class of random variables $X_j$ 
satisfying (\ref{2.1}) with arbitrary integers $k_j$ is closed with respect
to products of independent random variables, as well. This turns out to be
false as the following example shows.
\begin{example}\label{ex2.2}
Let $X_1$ and $X_2$ be independent symmetric random variables such that
\begin{equation}\label{2.5}
\lim_{z\to\infty}z^\al P\{ X_1>z\}=c_1\,\,\mbox{and}\,\,
\lim_{z\to\infty}z^\al(\ln z)P\{ X_2>z\}=c_2.
\end{equation}
Then
\begin{equation}\label{2.6}
\lim_{z\to\infty}z^\al(\ln\ln z)^{-1}P\{ X_1X_2>z\}=2c_1c_2\al.
\end{equation}
\end{example}
We will make necessary computations leading to (\ref{2.6}) at the end of 
Section \ref{sec3}. This example also shows that it may be difficult to
obtain general precise folmulas for tail asymptotics beyond the class of
random variables satisfying (\ref{2.1}).

Next, we consider a sequence of i.i.d. random variables $X_1,X_2,...$
satisfying  
\begin{equation}\label{2.1a}
\lim_{z\to\infty}z^{\al}(\ln z)^{-k} P\{\pm X_j>z\}=c_\pm
\end{equation}
where, again, $\al\in(0,2),\, k\geq 0,\, c^\pm\geq 0$ and $c^++c^->0$.
For $0\le t\le T$, set
\begin{equation}\label{2.10}
S_N(\theta,t)=\sum_{1\le n\le Nt} g_\theta(X_{q_1(n)},X_{q_2(n)}
,...,X_{q_\ell(n)})
\end{equation}
and 
\begin{equation}\label{2.7}
S_N(t)=\sum_{1\le n\le Nt} F(X_{q_1(n)},X_{q_2(n)},...,X_{q_\ell(n)})
=\sum_{\theta\in\Theta}h_\theta S_N(\theta, t).
\end{equation}
Here $F$ is the same as in (\ref{1.2}) and (\ref{2.2+}) while the integer
valued functions \hbox{$1\leq q_1(n)<q_2(n)<\cdots <q_\ell(n)$ } will be 
considered
here in two situations
\begin{equation}\label{2.8}
\ell-\mbox{dependence case: }\,\, q_j(n)=n+j-1,\,\, j=1,2,...,\ell;\,
n=1,2,...\,\,\mbox{and}
\end{equation}
\begin{equation}\label{2.9}
\mbox{arithmetic progression case: }\,\, q_j(n)=jn,\,\, j=1,2,...,\ell;\,
n=1,2,....
\end{equation}
Set  
\begin{eqnarray*}
&\sigma(\theta)=\max_{1\le j\le \ell}\sigma_j,\,\alpha(\theta)=\frac{\alpha}
{\sigma(\theta)}, \,J(\theta)=\{j: \sigma_j=\sigma(\theta)\};\,
 p(\theta)=\# \{j:j\in J(\theta)\},\\
 & k(\theta)=(k+1)p(\theta)-1\,\,\mbox{and}\,\, \alpha_*=\min_{\theta\in\Theta}
 \alpha(\theta),\, k_*=\max_{\theta: \alpha(\theta)=\alpha_*} k(\theta).
 \end{eqnarray*}
 We define again $\Te_*$ as above and observe that for all $\te\in\Te_*$,
 \[
 p_*=p(\te)=(k_*+1)(k+1)\quad\mbox{and}\quad\sig_*=\sig(\te)=\al\al_*^{-1}
 \]
 are the same. We denote also by $m_*$ the cardinality of $\Te_*$.

Consider the collection of random variables $\{Z_\theta,\,\theta\in\Theta\}$
 where $Z_\theta=g_\theta(X_1,\ldots,X_\ell)$. Let
$$
b_N=N^{\frac {1}{\al_*}}(\frac {1}{\al_*}\ln N)^{\frac{ k_*}
{\al_*}},\,\,\mbox{and}\,\,
a^\theta_N=E[\frac{b_N^2 Z_\theta}{ b_N^2+Z_\theta^2}]
$$
In Section \ref{sec5} we will establish the following limit theorem
for the $\ell$-dependence case.

\begin{theorem}\label{thm2.3} Let $q_j(n)$ be defined by (\ref{2.8}).

(i) As $N\to\infty$, all finite dimensional distributions of the $R^m$ valued
  process
\begin{equation}\label{2.12}
\Xi_N(\theta, t)=\frac {1}{b_N}\big(S_N(\theta,t)-Nta^\theta_N\big),\,\, 
t\in [0,T],\,\,\theta\in\Theta
\end{equation}
 converge weakly to the corresponding finite dimensional distributions of 
an $\al _*$-stable L\' evy process $\{\Xi(\theta,t),\, \theta\in\Theta\},\,
t\in[0,T]$ where $\Xi(\theta,\cdot)\equiv 0$ for $\theta\in\Theta\setminus
\Theta_*$.
 For each $\theta$ the process $\Xi_N(\theta, t)$ converges in $J_1$ 
topology but the $R^m$ valued vector process  $\{\Xi_N(\theta,\cdot),\,\theta
\in\Theta\}$
 may not converge weakly in any of Skorokhod's $J_1,J_2,M_1$ or $M_2$ 
 topologies.

(ii) Suppose now that in the representation (\ref{2.10}) there
exists no pair $\theta_1,\theta_2\in\Theta$ satisfying
 $J(\theta_2)=J(\theta_1)+r$ in the sense that there is 
no  integer $r$ such that  $i\in J(\theta_1)$ if and only if $i+r\in 
J(\theta_2)$. Then, as $N\to\infty$, the vector process $\{\Xi_N(\theta,\cdot),
\,\theta\in\Theta\}$ converges weakly in the $J_1$ topology on the space 
$D([0,T],\,\bbR^m)$
to the above $\al_*$-stable vector L\' evy process $\{\Xi(\theta,\cdot),\,
\theta\in\Theta\}$ and the component processes $\Xi(\theta,\cdot)$
are mutually independent for different $\te\in\Te$.

(iii) The sum $\xi_N(t)=\sum_{\theta\in\Theta}h_\theta \,\Xi_N(\theta, t)$ 
converges to the $\alpha_*$-stable L\' evy process 
$\xi(t)=\sum_{\theta\in\Theta}h_\theta\,\Xi(\theta, t)$, in general,
 only in the sense of weak convergence of finite dimensional distributions 
 while under the additional condition of (ii) the convergence is in the $J_1$ 
topology. 
\end{theorem}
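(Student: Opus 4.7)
The summands $\xi_n^\te:=g_\te(X_n,X_{n+1},\ldots,X_{n+\ell-1})$ form a stationary $(\ell-1)$-dependent sequence, and by Theorem~\ref{thm2.1} applied with common $\al_j=\al$, $k_j=k$, each $\xi_0^\te$ has tails of index $\al(\te)=\al/\sig(\te)$ with logarithmic factor $k(\te)=(k+1)p(\te)-1$. For $\te\in\Te\setminus\Te_*$ these tails are strictly lighter than those captured by the normalization $b_N$, so a standard truncation-and-centering estimate yields
\[
b_N^{-1}\sup_{t\le T}\bigl|S_N(\te,t)-Nt\,a_N^\te\bigr|\xrightarrow{\bfP}0,
\]
whence $\Xi(\te,\cdot)\equiv 0$ for such $\te$. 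The analysis therefore reduces to the $\bbR^{m_*}$-valued stationary $(\ell-1)$-dependent sequence $\{(\xi_n^\te)_{\te\in\Te_*}\}_{n\ge 1}$, whose joint tail behavior in the sense of (\ref{2.3}) is governed by Theorem~\ref{thm2.1}(i), which identifies a spectral measure $\nu$ on $S^{m_*-1}$; the normalization $b_N$ is precisely the one for which $b_N^{-1}\max_{n\le N}|\xi_n^\te|$ has a nondegenerate limit in distribution for $\te\in\Te_*$.

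\textbf{FDD convergence and scalar $J_1$.} Since $(\ell-1)$-dependence trivially satisfies every strong-mixing and anti-clustering hypothesis, the scalar stable limit theorem of \cite{TK} applies to each $\{\xi_n^\te\}_{n\ge 1}$ and yields the $J_1$-convergence of the one-dimensional process $\Xi_N(\te,\cdot)$ to an $\al_*$-stable L\'evy process, proving the last assertion of (i). The joint finite-dimensional weak convergence in (i) is then obtained either via the Cram\'er--Wold device together with the scalar stable CLT applied to arbitrary linear combinations $\sum_\te c_\te\xi_n^\te$, or more directly by a multivariate version of the same theorem for stationary $(\ell-1)$-dependent vector sequences; in either approach the joint L\'evy measure of the limit is read off from Theorem~\ref{thm2.1}(i), and all cross-$\te$ correlations between summand clusters are exhausted inside the bounded window $[n,n+\ell-1]$, hence contribute at most an $N$-independent factor.

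\textbf{Joint $J_1$: obstruction and resolution.} The remaining and most delicate statements concern the vector-valued $J_1$ topology and rest on tracking the \emph{clusters} of large summands generated by a single extreme $X_m$. For $\te\in\Te_*$ the dominant contributions of $X_m$ to $\xi_n^\te$ occur at the $p(\te)$ micro-times $n=m-j+1$, $j\in J(\te)$, all within $O(1/N)$ of the macroscopic time $m/N$ and collapsing to a single limit jump. When $J(\te_2)=J(\te_1)+r$ for some integer $r\neq 0$, the same $X_m$ produces cluster patterns in the $\te_1$- and $\te_2$-components that are rigid shifts of each other by $r/N\to 0$, with sizes both proportional to $X_m^{\sig_*}$; no Skorokhod topology $J_1,J_2,M_1,M_2$ can accommodate such an asymptotically simultaneous but strictly ordered pair of jumps, and the explicit counterexample is $\ell=2$, $F(x_1,x_2)=x_1^2x_2+x_1x_2^2$ with $\te_1=(2,1)$, $\te_2=(1,2)$ satisfying $J(\te_2)=J(\te_1)+1$. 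Under the non-overlap hypothesis of (ii), by contrast, a point-process analysis of the big jumps, based once more on the joint tail identity (\ref{2.3}), shows that the limiting L\'evy measure on $\bbR^{m_*}$ is concentrated on the coordinate axes, so distinct components $\Xi(\te,\cdot)$ have a.s.\ disjoint jump supports, are independent $\al_*$-stable L\'evy processes, and $J_1$-convergence of the vector process follows from the absence of common jumps. The axis-support claim is the main technical obstacle: it requires matching the combinatorics of which $\te\in\Te_*$ receives a dominant cluster from a given extreme $X_m$ to the joint extreme-value asymptotics of Theorem~\ref{thm2.1}(i), using the failure of any translation identity $J(\te_2)=J(\te_1)+r$. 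Part (iii) is then immediate: the linear map $(v_\te)\mapsto\sum_\te h_\te v_\te$ preserves FDD convergence unconditionally, and under the hypothesis of (ii) the limit vector L\'evy process has a.s.\ no common jumps across components, so this map is a.s.\ continuous at the limit in $J_1$ and the continuous mapping theorem gives the claimed $J_1$-convergence of $\xi_N$.
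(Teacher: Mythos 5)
Your proposal correctly identifies the broad shape of the argument — appeal to the TK stable limit theorem for stationary $\ell$-dependent arrays, reduce to $\Te_*$, recognize that shift pairs $J(\te_2)=J(\te_1)+r$ are the obstruction to vector $J_1$-convergence — but it contains a genuine gap at its foundation and is missing the paper's central device.

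The gap: you assert that ``$(\ell-1)$-dependence trivially satisfies every strong-mixing and anti-clustering hypothesis,'' and build everything on this. This is false. The TK theorem (Proposition~\ref{prop5.1} in the paper) requires the anti-clustering condition
\[
\lim_{N\to\infty}N\,P\{|Z_1|>\del b_N,\ |Z_j|>\del b_N\}=0,\quad j=2,\dots,\ell,
\]
which is a statement about \emph{within-window} clustering and is not a consequence of finite-range dependence; it depends entirely on whether two nearby summands can be simultaneously large, i.e.\ on the algebraic structure of $g_\te$. The paper verifies it for each fixed $\te$ only by exploiting the product structure of the monomials (Lemma~\ref{lem4.3} applied to the common factor $V_2$ of the two overlapping windows, which yields a strictly smaller logarithmic exponent). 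And crucially, for the \emph{vector} summands the condition genuinely fails whenever a shift pair $J(\te_2)=J(\te_1)+r$ exists — that failure is precisely the content of the obstruction in part (i). Declaring anti-clustering trivial erases the distinction between (i) and (ii) and makes the rest of the argument internally inconsistent: if it were trivially satisfied you could apply Proposition~\ref{prop5.1} directly to the vector process and get vector $J_1$-convergence unconditionally, contradicting the statement you are trying to prove.

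Because of this, the FDD step in your (i) is not justified. You propose Cram\'er--Wold, but a linear combination $\sum_\te c_\te\xi_n^\te$ is again a polynomial in the $X$'s and again may violate anti-clustering (e.g.\ $\xi_n^{\te_1}+\xi_n^{\te_2}=X_n+X_{n+1}$), so you cannot simply invoke the scalar theorem for it. The paper resolves this with a \emph{rearrangement procedure}: it time-shifts the summands $Y_n(\te)$ within each shift class $\Psi_i$ so that the large monomials for related $\te$'s become aligned at the same index $n$, producing new vector summands $\cY_n(\te)$ that do satisfy (\ref{5.2}); Proposition~\ref{prop5.1} then applies to $\cS_N$, and the boundary error (\ref{5.7}) is small enough for FDD convergence but not for $J_1$. (The Appendix gives an alternative block-decomposition proof of the FDD convergence that likewise bypasses the failure of anti-clustering.) Neither mechanism appears in your proposal; without one of them, the FDD convergence in (i) is unproved. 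Your sketches for (ii) and (iii) would be fine once (i) and the anti-clustering verification are in place, and the heuristic picture of overlapping clusters is accurate, but as written the proof does not stand.
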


For the arithmetic progression case we will obtain in Section \ref{sec6} the 
following result.

\begin{theorem}\label{thm2.4} Let $S_N(\theta,t)$ and $\Xi_N(\theta,t)$ be 
defined by (\ref{2.10}) and (\ref{2.12}), respectively, with $q_j(n)$ 
defined by (\ref{2.9}).

(i) As $N\to\infty$, the $R^m$-valued vector process $\{\Xi_N(\theta,\cdot),
\,\theta\in\Theta\}$ converges weakly in the $J_1$ topology on the space $D([0,T],\,\bbR^m)$
 to a process $\{\Xi(\theta,\cdot),\,\theta\in\Theta\}$ where 
 $\Xi(\theta,\cdot)
 \equiv 0$ for $\theta\in\Theta\setminus\Theta_*$. Each $\Xi(\theta,\cdot),\,
 \theta\in\Theta_*$ is an $\alpha_*$ stable L\' evy process but, in general, 
 these processes are mutually dependent. Their dependence structure will be
  clarified in the proof. Furthermore, the vector process $\{\Xi(\theta,\cdot),
  \,\theta\in\Theta\}$ has, in general, dependent increments.

(ii) Suppose that in the representation (\ref{2.10}) there exists no pair
 $\theta_1,\theta_2\in\Theta_*$ satisfying $J(\theta_2)=r\,J(\theta_1)$ in the 
 sense that there is no positive number $r$  such that $i\in J(\theta_1)$ if 
 and only if $ir\in J(\theta_2)$. Then, as $N\to\infty$, for each $\theta\in
 \Theta_*$ the  process $\Xi_N(\theta,\cdot)$ weakly converges in $J_1$ 
 topology to an  $\alpha_*$-stable L\' evy process $\Xi(\theta,\cdot)$ and 
 the latter processes are independent. Moreover, as $N\to\infty$ the vector 
 process $\{\Xi_N(\theta,\cdot),\,\te\in\Te_*\}$ weakly converges in the $J_1$
 topology of $D([0,T];R^{m_*})$ to the $m_*$-dimensional $\alpha_*$ stable 
 L\' evy process $\{\Xi(\theta,\cdot),\,\te\in\Te_*\}$.
 
 (iii) The sum $\xi_N(t)=\sum_{\theta\in\Theta}h_\theta\Xi_N(\theta,t ),\, 
 t\in[0,T]$
  converges weakly in the $J_1$ topology to an $\al_*$-stable process $\xi(t)
  =\sum_{\theta\in\Theta}h_\theta\,\Xi(\theta, t),
  \, t\in[0,T]$ which may have, in general, dependent increments but under the 
   additional condition of (ii) this process has independent increments, i.e.
    it is a L\' evy process. 
\end{theorem}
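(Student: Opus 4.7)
\emph{Plan.} The approach is to track ``big summands'' via a point-process argument, using Theorem~\ref{thm2.1} applied to each monomial and to the full monomial vector. Since each summand $g_\theta(X_n, X_{2n},\ldots,X_{\ell n})$ for $\theta\in\Theta_*$ has tail $\sim c\,y^{-\alpha_*}(\ln y)^{k_*}$ and $b_N^{\alpha_*}(\ln b_N)^{-k_*}$ is of order $N$, the $Nt$ summands produce an $\alpha_*$-stable limit upon centering by $Nt\,a_N^\theta$ and scaling by $b_N^{-1}$. The starting point is the standard heavy-tail truncation: split each summand at the threshold $\epsilon b_N$ and show that, after centering, the below-threshold part contributes $O(\epsilon^{2-\alpha_*})$ to the scaled variance, hence is negligible as $\epsilon\downarrow 0$.

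For the finite-dimensional convergence in (i), I would evaluate the joint characteristic functional of $(\Xi_N(\theta_j,t_i))_{i,j}$ using Theorem~\ref{thm2.1}(i) applied to the monomial vector $\{g^*_\theta(X_{q_1(n)},\ldots,X_{q_\ell(n)})\}_{\theta\in\Theta}$ at each $n$. The key observation: although a single ``big'' $X_i$ can appear in multiple summands at indices $n=i/j$ for $j\in J(\theta)$ dividing $i$, a summand at such $n$ reaches order $b_N$ only when all $p_*$ cooperative factors $X_{jn}^{\sigma_*}$ are simultaneously somewhat large; for $p_*\ge 2$ the joint probability that a single $X_i$ produces two distinct big summands decays faster than $N^{-1}$ and is asymptotically negligible, while for $p_*=1$ each big $X_i$ produces at most one big summand. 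Hence each $\Xi(\theta,\cdot)$ has independent increments, giving an $\alpha_*$-stable L\'evy process. Tightness in $J_1$ then follows from an Aldous-type criterion applied to the truncated processes together with the Poisson control $O(1)$ on the number of big jumps in $[0,T]$.

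The joint dependence across $\theta_1,\theta_2\in\Theta_*$ arises from configurations in which big summands at $n_1$ (for $\theta_1$) and $n_2$ (for $\theta_2$) share a common cooperative variable, i.e.\ $i=j\,n_1=j'\,n_2$ with $j\in J(\theta_1)$ and $j'\in J(\theta_2)$. For the \emph{entire} cooperative collection at $n_1$ to match one at $n_2$, one needs $J(\theta_2)=(n_1/n_2)\,J(\theta_1)$. Under the no-resonance hypothesis of (ii) no positive $r$ satisfies $J(\theta_2)=r\,J(\theta_1)$, so the joint L\'evy measure factorizes over $\theta\in\Theta_*$, yielding mutually independent components. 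Without the hypothesis, the specific rational ratios $r=n_1/n_2$ compatible with the resonance contribute a nontrivial joint intensity, explaining both the coordinate dependence and the dependent increments noted in (i). Part~(iii) then follows from the continuous mapping theorem, since the linear map $(y_\theta)\mapsto\sum_\theta h_\theta y_\theta$ is $J_1$-continuous on $D([0,T];\bbR^m)$.

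The main obstacle is to identify rigorously, in the general case of~(i), the limiting vector-valued L\'evy measure that encodes the clustered contributions from shared $X_i$'s via resonant ratios, and to verify joint $J_1$ convergence despite the long-range dependence produced by the progressions $q_j(n)=jn$. The $\ell$-dependence arguments of~\cite{TK} do not apply here since the summands are not stationary, so this identification must be carried out directly from the point-process limit.
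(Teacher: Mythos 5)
Your intuition is on the right track: the ``resonance'' condition $J(\theta_2)=r\,J(\theta_1)$ you identify is exactly the hypothesis of part (ii), and the mechanism you describe --- a single large $X_i$ contributing to the $\theta_1$-summand at $n_1=i/j$ and the $\theta_2$-summand at $n_2=i/j'$ --- is indeed the source of both the cross-$\theta$ dependence and the dependent increments. But you explicitly flag as ``the main obstacle'' the step of rigorously identifying the limiting vector L\'evy structure and establishing $J_1$ convergence under the long-range dependence, and that obstacle is not something a point-process truncation alone resolves. What is missing is the structural decomposition that the paper borrows from \cite{KV2}: factor $\mathbb{Z}_+=\bigcup_{i\in\mathbb{Z}_0}\Gamma_i$, where $\Gamma_1=\{p_1^{b_1}\cdots p_s^{b_s}\}$ is the set of $\ell$-smooth numbers (ordered as $n_1<n_2<\cdots$), $\mathbb{Z}_0$ is the complementary set of integers with no prime factor $\le\ell$, and $\Gamma_i=i\,\Gamma_1$. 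Because $n\mapsto jn$ maps $\Gamma_i$ into itself for $j\le\ell$, the summands $Y_n(\theta)$ for $n$ in different $\Gamma_i$'s are mutually independent. Fixing finitely many $n_1,\ldots,n_q\in\Gamma_1$ and letting $i$ range over $\mathbb{Z}_0$ gives genuinely i.i.d.\ block data, and the crucial device is the time rescaling
\[
\Xi_N^q(\theta,t)=\sum_{j=1}^{q}\hat\Xi_N^{j,\theta}\!\Big(\frac{t}{n_j}\Big),
\qquad
\hat\Xi_N^{j,\theta}(s)=\frac{1}{b_N}\sum_{i\in\mathbb{Z}_0,\;i\le Ns}\big(Y_{in_j}(\theta)-a_N^\theta\big),
\]
so the process is a finite sum of independent stable L\'evy processes each run at a different linear speed $1/n_j$. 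It is precisely this representation that makes (a) each $\Xi(\theta,\cdot)$ a L\'evy process (since for fixed $\theta$ the pairs $(n_j,\theta)$ are always in distinct equivalence classes, the $\hat\Xi^{j,\theta}$ are independent, and the telescoping in display~(6.5) reconstitutes independent increments), (b) the vector process have dependent increments whenever $(n_{j_1},\theta_1)\sim(n_{j_2},\theta_2)$ with $n_{j_1}\ne n_{j_2}$ (the same underlying L\'evy jump appears at the two distinct rescaled times $t/n_{j_1}$ and $t/n_{j_2}$), and (c) the vector $J_1$ convergence provable via the no-common-jump criterion of Corollary~\ref{cor7.1}. Your Aldous-criterion sketch does not address any of this.

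Two further gaps: first, your reduction to finitely many resonant configurations has to be justified uniformly in $N$, which the paper does through Lemmas~\ref{lem6.1}--\ref{lem6.3} (a $(\ell^2+1)$-coloring to extract independent subfamilies, plus a density estimate on the tail $\{n_{q'}:q'>q\}$ that controls $\sup_t|\Xi_N-\Xi_N^q|$ in probability uniformly in $N$); second, passing from $\Xi^q$ to $\Xi$ requires showing that $\{\mathcal{L}(\Xi^q)\}_q$ is Cauchy in the Prokhorov metric and then identifying the limit with $\lim_N\mathcal{L}(\Xi_N)$, which you do not address. Your claim that each per-$\theta$ process is L\'evy because ``a single $X_i$ produces at most one big summand'' for $p_*=1$ and because the $p_*\ge 2$ double-tail probability is $o(N^{-1})$ is correct intuition (and the second half is essentially Corollary~\ref{cor4.3+}), but by itself it only controls the L\'evy \emph{measure} of the one-dimensional marginal; it does not establish independence of increments across disjoint time intervals, which in the paper comes from the time-rescaled i.i.d.\ decomposition above. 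Part (iii) is fine via continuous mapping once the vector $J_1$ convergence of (i) is in hand, as you say.
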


Setting $Y_n=F(X_{q_1(n)},X_{q_2(n)},...,X_{q_\ell(n)})$
we observe that when $q_j(n)=n+j-1$ then the sequence $\{Y_n,n\geq 1\}$ is
stationary and $\ell$-dependent, and  it is known that under conditions which 
can be veryfied in our circumstances the stable limit theorem holds true (see,
for instance, \cite{TK}). 
When $q_j(n)=jn$ the sequence $\{Y_n; \,n\geq 1\}$ is strongly long 
 range dependent and it is not stationary.  So we are not able to rely 
 directly  on any known results. We deal with this case
 establishing first a multidimensional stable limit theorem for 
 $\Xi_N(\theta, t)$ splitting the whole sum into independent subsums
 similarly to \cite{KV2} and applying some time rescaling. It turns out that
under additional arithmetic conditions specified in the above theorems
the limiting behavior as $N\to\infty$ of the process $\Xi_N$ is similar in
both cases (\ref{2.8}) and (\ref{2.9}) while, in general, it is quite 
different in these two cases. Indeed, in the $\ell$-dependence case consider 
$F(x_1,x_2)=
x_2-x_1$ with $\ell=2$ then $S_N(t)=X_{[Nt]}-X_1$ and it is not difficult 
to understand (see \cite{AT}) that all finite dimensional distributions
of $S_N/b_N$ converge to the unit mass at 0 while there is no weak 
convergence in any of Skorokhod's topologies. It is shown also in \cite{AT}
that if, for instance, we take here $F(x_1,x_2)=x_1+x_2$ then there 
will be weak convergence of $S_N/b_N$ in the $M_1$ topology but not in $J_1$.
On the other hand, in the arithmetic progression case with $F(x_1,x_2)=x_1+x_2$ 
and $\ell=2$ the weak convergence of $S_N/b_N$ in $J_1$ will hold true but the 
increments of the limiting process on the time intervals $[T/4,T/2]$ and
$[T/2,T]$ will be dependent. The same remains true for vector processes 
from Theorem \ref{thm2.4}(i) considering $g_{\te_1}(x_1,x_2)=x_1$ and
$g_{\te_2}(x_1,x_2)=x_2$ so that we will be dealing with the sums
$S_N(t)=\sum_{1\leq n\leq Nt}(X_n,X_{2n})$. Then the partial sums from $TN/4$
to $TN/2$ and from $TN/2$ to $TN$ will be strongly dependent which will lead
to dependence of increments of the limiting vector process on the time
intervals $[T/4,T/2)$ and $[T/2,T]$.

\begin{remark}\label{rem2.5}
The truncated average $E(Z_\theta\bbI_{|Z_{\theta}|\leq b_n})$ is often
taken as a centering expression in stable limit theorems in place of
$a^\theta_N$ introduced above. The latter is more convenient for our
purposes and this change does not influence convergence in the 
corresponding limit theorem but leads only to an additional drift term in
the L\' evy limiting process. Actually, we can 
interpret truncation in a wider sense as replacing $EX$ by $E(f(X))$ where
$f$ is a bounded function and $|f(x)-x|=o(|x|^2)$ near $0$. Two common 
choices are $f(x)=x{\bf 1}_{|x|\le \tau}(x)$ or $\frac{x}{1+|x|^2}$.
They affect only the values of $\gamma$ in the L\'{e}vy-Khintchine 
representations
$$
\log\psi(t)=i <\gamma_1, t>+\int_{|x|\le \tau}(e^{i <t, x>}-1-
i<t, x>)+\int_{|x|>\tau}(e^{i <t, x>}-1)dM(x)
$$
or
$$
\log\psi(t)=i  <\gamma_2,  t>+\int\big(e^{i <t, x>}-1-\frac{i 
<t, x>}{1+|x|^2}\big)dM(x)
$$
with
$$
\gamma_1=\gamma_2+\int\big(x {\bf 1}_{|x|\le \tau}-\int \frac {x}{1+
|x|^2}\big)dM(x)
$$
If we want to relate the truncated mean of $X+Y$ to the sum of the truncated 
means of $X$ and $Y$ it is easier to handle $f(X+Y)-f(X)-f(Y)$
with $f(x)=\frac{x}{1+|x|^2}$ than with $f(x)=x {\bf 1}_{|x|\le \tau}$. 
If we use the truncated mean with some $f(x)$ and center by subtracting the 
truncated mean then we end up in the limit  with the representation
$$
\log\psi(t)=\int[e^{i\ <t, x>}-1- i <t, f(x)>]dM(x)
$$
with $\gamma=0$. We can do one truncation and still go to the other 
representation by defining $\gamma$ suitably (cf. \cite{GK}).
\end{remark}
\begin{remark}\label{rem2.6}
An obvious corollary of Theorems \ref{thm2.3} and \ref{thm2.4} when
$\al_*>1$ is a weak law of large numbers saying that for all $\te\in\Te$,
\[
\lim_{N\to\infty}\frac 1NS_N(\te,1)=EZ_\te
\]
where convergence is considered in probabability. This is not new in the 
$\ell$-dependence case since then the summands in $S_N(\te,1)$ form a 
stationary sequence but in the arithmetic progression case of Theorem 
\ref{thm2.4} this assertion does not seem to follow directly from previous
results.
\end{remark}

\section{Tails of products of independent random variables}\label{sec3}
\setcounter{equation}{0}

Clearly, in order to establish Theorem \ref{thm2.1} for products, i.e.
$F(x_1,x_2,...,x_\ell)=x_1x_2\cdots x_\ell$ it suffices to prove it for
$\ell=2$ and then to proceed by induction. Thus, we prove first
\begin{proposition}\label{prop3.1}
Let $X_1$ and $X_2$ be independent random variables such that $X_1$ satisfies
(\ref{2.1}) with $0<\al_1<2,\, k_1\geq 0$ and $c_1^\pm\geq 0,\,
c_1^++c_1^->0$.

(i) Suppose that for some $\al_2>\al_1$,
\begin{equation}\label{3.1}
\lim\sup_{z\to\infty}z^{\al_2}P\{ |X_2|>z\}=\rho<\infty.
\end{equation}
Then
\begin{eqnarray}\label{3.2}
&\lim_{z\to\infty}z^{\al_1}(\ln z)^{-k_1}P\{\pm X_1X_2>z\}\\
&=\al_1\int_0^\infty x^{\al_1-1}(c_1^+P\{\pm X_2>x\}+c^-_1
P\{\mp X_2>x\})dx.
\nonumber\end{eqnarray}

(ii) Suppose that $X_2$ satisfies (\ref{2.1}) with $\al_2=\al_1\in(0,2)$ and
 some $k_2\in\bbZ_+\cup\{ 0\}$ and $c_2^\pm\geq 0,\,c_2^++c_2^->0$. Then
\begin{eqnarray}\label{3.3}
&\lim_{z\to\infty}z^{\al_1}(\ln z)^{-(k_1+k_2+1)}P\{\pm X_1X_2>z\}\\
&=\al_1c^\pm\frac {\Gam(k_1+1)\Gam(k_2+1)}{\Gam(k_1+k_2+2)}
\nonumber\end{eqnarray}
where $c^+=c^+_1c^+_2+c^-_1c^-_2$, $c^-=c^+_1c^-_2+c^-_1c^+_2$ and $\Gam$
denotes the Gamma function.
\end{proposition}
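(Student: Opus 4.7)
The plan is to condition on $X_2$ and reduce to the one-dimensional tail asymptotic of $X_1$. By splitting on the four sign combinations of $(X_1,X_2)$ and using symmetry, it suffices to analyze
\[
I_+(z) := \int_0^\infty P\{X_1 > z/x\}\,dG(x), \qquad G(x) := P\{0 < X_2 \le x\},
\]
with the three analogous integrals treated in the same way. Throughout I will use the pointwise asymptotic $P\{X_1 > y\} \sim c_1^+ y^{-\al_1}(\ln y)^{k_1}$ and the consequent global upper bound $P\{X_1 > y\} \le C y^{-\al_1}\bigl(1 + (\ln_+ y)^{k_1}\bigr)$ for $y \ge 1$, both immediate from (2.1).

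For part (i), the condition $\al_2 > \al_1$ together with (3.1) gives $E|X_2|^{\al_1} < \infty$. The rescaled integrand $f_z(x) := z^{\al_1}(\ln z)^{-k_1} P\{X_1 > z/x\}$ converges pointwise to $c_1^+ x^{\al_1}$ (since $\ln(z/x)/\ln z \to 1$) and is dominated by $C x^{\al_1}(1 + |\ln x|/\ln z)^{k_1} \le C' x^{\al_1}(1 + |\ln x|)^{k_1}$ on $x \in (0, z]$. A separate decomposition at $x = z^{1/2}$ shows that the contribution from $x > z^{1/2}$ is at most $z^{\al_1}(\ln z)^{-k_1}P\{X_2 > z^{1/2}\} = O\bigl(z^{\al_1-\al_2/2}(\ln z)^{-k_1}\bigr) \to 0$, and dominated convergence handles the remainder. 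The resulting limit $c_1^+ E[X_2^{\al_1}\mathbf{1}_{X_2>0}] = \al_1 c_1^+ \int_0^\infty x^{\al_1-1} P\{X_2>x\}\,dx$ (integration by parts), combined with the three other sign combinations, gives (3.2).

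For part (ii), the moment $E|X_2|^{\al_1}$ is now logarithmically divergent, which is precisely the source of the additional $\ln z$ factor. I would split the integration range into a bulk region $\{z^\ve \le x \le z^{1-\ve}\}$, where both $z/x$ and $x$ are large enough for the asymptotics of $X_1$ and $X_2$ to apply simultaneously, and three boundary regions controlled by the upper bound. Using (2.1) for $X_2$ in the form $dG(x) \sim \al_1 c_2^+ x^{-\al_1-1}(\ln x)^{k_2}\,dx$ and $P\{X_1 > z/x\} \sim c_1^+(z/x)^{-\al_1}(\ln(z/x))^{k_1}$ in the bulk, and substituting $x = z^s$ transforms the bulk integral into
\[
\al_1 c_1^+ c_2^+\,z^{-\al_1}(\ln z)^{k_1+k_2+1} \int_\ve^{1-\ve} s^{k_2}(1-s)^{k_1}\,ds.
\]
Letting first $z \to \infty$ and then $\ve \to 0$, the inner integral converges to the Beta value $B(k_2+1,k_1+1) = \Gamma(k_1+1)\Gamma(k_2+1)/\Gamma(k_1+k_2+2)$. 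Summing over the four sign combinations produces the constants $c^\pm = c_1^+ c_2^\pm + c_1^- c_2^\mp$ claimed in (3.3).

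The main obstacle is the uniform control in part (ii) of the boundary contributions $x \le z^\ve$ and $x \ge z^{1-\ve}$: one must verify that these pieces are $o\bigl((\ln z)^{k_1+k_2+1} z^{-\al_1}\bigr)$ as $z \to \infty$, uniformly in $\ve$ small, before one is entitled to interchange the limits. This is achieved by combining the global tail upper bound with the observation that $\int_0^\ve s^{k_2}(1-s)^{k_1}\,ds \to 0$, which provides enough slack to absorb the non-asymptotic contributions near the endpoints. In part (i) no such delicate tradeoff is required, since the moment $E|X_2|^{\al_1}$ is finite and supplies an integrable dominator on its own.
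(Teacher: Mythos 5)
Your overall strategy---condition on $X_2$, decompose by sign, use the pointwise tail asymptotic of $X_1$, and control the error terms---is the same program the paper follows, so the comparison hinges on whether the details close. There are two concrete gaps.

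\textbf{The cutoff at $z^{1/2}$ in part (i) is wrong.} You bound the contribution from $x>z^{1/2}$ by
$z^{\al_1}(\ln z)^{-k_1}P\{X_2>z^{1/2}\}=O\bigl(z^{\al_1-\al_2/2}(\ln z)^{-k_1}\bigr)$
and claim this tends to $0$, but that requires $\al_2>2\al_1$, which is not among the hypotheses (only $\al_2>\al_1$ is assumed). For $\al_1<\al_2\le 2\al_1$ your tail estimate does not vanish and the decomposition breaks down. You need a cutoff closer to $z$: any $z^\beta$ with $\beta\in(\al_1/\al_2,1)$ works, as does the paper's choice $z/(\ln z)^\delta$. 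With such a cutoff the tail contribution is $O(z^{\al_1-\beta\al_2}(\ln z)^{-k_1})\to 0$, and on $(0,z^\beta]$ your dominator $Cx^{\al_1}(1+|\ln x|)^{k_1}$ remains valid and integrable (since $E|X_2|^{\al_1'}<\infty$ for $\al_1<\al_1'<\al_2$), so dominated convergence applies as you intended.

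\textbf{The boundary control in part (ii) is asserted, not proved, and it is actually the delicate point.} You correctly identify the bulk computation producing the Beta integral $\int_\ve^{1-\ve}s^{k_2}(1-s)^{k_1}ds$, and you correctly note that the problem is to show the regions $x\le z^\ve$ and $x\ge z^{1-\ve}$ contribute $o\bigl(z^{-\al_1}(\ln z)^{k_1+k_2+1}\bigr)$ with enough uniformity to justify the iterated limit. But the sentence ``combining the global tail upper bound with the observation that $\int_0^\ve s^{k_2}(1-s)^{k_1}ds\to 0$'' does not yet constitute that verification. In particular, for $x\in[1,z^\ve]$ the crude bound $P\{X_1>z/x\}\le C(z/z^\ve)^{-\al_1}(\ln z)^{k_1}$ combined with $P\{1\le X_2\le z^\ve\}\le 1$ gives only $O(z^{\al_1\ve}(\ln z)^{-(k_2+1)})$, which diverges; one genuinely must feed the $X_2$ tail asymptotic into the estimate (e.g.\ via integration by parts in $x$, or by using a dominating density $Cx^{-\al_1-1}(\ln x)^{k_2}dx$ valid only for $x\ge u_\ve$, plus a separate bounded-mass argument on $[0,u_\ve]$). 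This is exactly what the paper does by introducing the secondary threshold $u_\ve$ and integrating by parts to pass from $Q_2^+$ through $Q_3^+,Q_4^+$ to the Beta value, plus the error controls (3.14)--(3.19). You also need a separate bound for $x>z$, where $P\{X_1>z/x\}\le 1$ is the only available estimate and one uses $z^{\al_1}(\ln z)^{-(k_1+k_2+1)}P\{X_2>z\}\sim c_2(\ln z)^{-(k_1+1)}\to 0$. None of these steps is hard, but they are the substance of the proof and should be written out.

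In short: the decomposition-and-asymptotics approach is sound and matches the paper, the Beta-function computation in the bulk is correct, the constants $c^\pm=c_1^+c_2^\pm+c_1^-c_2^\mp$ are right, but the $z^{1/2}$ cutoff in (i) must be moved and the boundary estimates in (ii) must be carried out rather than declared.
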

\begin{proof}
The assertion (i) follows actually from the theorem of Breiman
as presented by means of Proposition 7.5 in \cite{Re}. On the other hand, the
assertion (ii) seems to be specific for the class of 
random variables satisfying (\ref{2.1}) and for readers' convenience we will
 give the complete proof of the whole result here. In the proof we will employ 
 several times integration by parts for Stiltjes integrals which will be
 legitimate since integrands in our circumstances will be differentiable
 (see, for instance, \cite{Bi}, Theorem 18.4 and remarks there or \cite{Sh},
 Theorem 11 and Corollary 1 in \S 6, Ch.II). For any $z>1$ we write
\begin{equation}\label{3.4}
Q(z)=P\{ X_1X_2>z\}=Q_1^+(z)+Q_1^-(z)+R_1(z)
\end{equation}
where for some small $\del>0$,
\[
Q_1^+(z)=E{\bf 1}_{0<X_2\leq z/(\ln z)^\del}P\{X_1>\frac z{X_2}|X_2\},
\]
\[
Q_1^-(z)=E{\bf 1}_{0>X_2\geq -z/(\ln z)^\del}P\{-X_1>\frac z{-X_2}|X_2\}
\]
and by (\ref{3.1}) or by (\ref{2.1}) depending on the case,
\begin{equation}\label{3.5}
|R_1(z)|\leq P\{ |X_2|>\frac z{(\ln z)^\del}\}\leq C_1z^{-\al_2}
(\ln z)^{\al_2\del}\big(\ln\frac z{(\ln z)^\del}\big)^{k_2}
\end{equation}
for some $C_1>0$ independent of $z$. Next, set
\begin{eqnarray*}
&Q^+_2(z)=c^+_1E{\bf 1}_{0<X_2<z/(\ln z)^\del}\big(\frac {X_2}z\big)^{\al_1}
\big(\ln\frac z{X_2}\big)^{k_1}=c_1^+I_1^+(z)\\
&\mbox{where}\,\, I_1^+(z)=\int_0^{z/(\ln z)^\del}(\frac xz)^{\al_1}(\ln
\frac zx)^{k_1}dP\{ X_2\leq x\}.
\end{eqnarray*}
Then taking into account that $\frac z{X_2}\geq(\ln z)^\del$ when $0<X_2
<z/(\ln z)^\del$ we obtain that
\begin{equation}\label{3.6}
|Q_1^+(z)-Q_2^+(z)|\leq I_1^+(z)R_2(z)
\end{equation}
where by (\ref{2.1}),
\begin{equation}\label{3.7}
R_2(z)=\sup_{u\geq(\ln z)^\del}|u^{\al_1}(\ln u)^{-k_1}P\{ X_1>u\}-c^+_1|
\to 0\,\,\mbox{as}\,\, z\to\infty.
\end{equation}

Now integrating by parts we obtain
\begin{eqnarray}\label{3.8}
&I_1^+(z)=z^{-\al_1}x^{\al_1}(\ln\frac zx)^{k_1}P\{ X_2\leq
 x\}|_0^{z/(\ln z)^\del}\\
&-z^{-\al_1}\int_0^{z/(\ln z)^\del}x^{\al_1-1}\big(\al_1(\ln\frac zx)^{k_1}-
k_1(\ln\frac zx)^{k_1-1}\big)P\{ X_2\leq x\}dx=Q_3^+(z)\nonumber
\end{eqnarray}
where
\[
Q_3^+(z)=z^{-\al_1}\int_0^{z/(\ln z)^\del}x^{\al_1-1}\big(\al_1
(\ln\frac zx)^{k_1}-
k_1(\ln\frac zx)^{k_1-1}\big)P\{\frac z{(\ln z)^\del}>X_2>x\}dx.
\]
Set
\[
Q_4^+(z)=z^{-\al_1}\int_0^{z/(\ln z)^\del}x^{\al_1-1}\big(\al_1
(\ln\frac zx)^{k_1}-
k_1(\ln\frac zx)^{k_1-1}\big)P\{ X_2>x\}dx.
\]
Then
\begin{equation}\label{3.9}
|Q_3^+(z)-Q_4^+(z)|\leq R_3(z)P\{ |X_2|>z/(\ln z)^\del\}
\end{equation}
where changing variables $y=\ln\frac zx$ we have
\[
R_3(z)=|\int_{\del\ln\ln z}^\infty e^{-\al_1y}(\al_1y^{k_1}-k_1y^{k_1-1})dy|.
\]

Assuming $\al_2>\al_1$ we have from (\ref{3.1}) that $x^{\al_2}P\{ X_2>x\}$ is
bounded and then it follows easily that
\begin{equation}\label{3.10}
\lim_{z\to\infty}z^{\al_1}(\ln z)^{-k_1}Q^+_4(z)=\al_1\int_0^\infty x^{\al_1-1}
P\{ X_2>x\}dx
\end{equation}
and the integral in (\ref{3.10}) converges. On the other hand, when 
$\al_2>\al_1$, it follows from (\ref{3.1}) and (\ref{3.5})--(\ref{3.9}) that
\begin{equation}\label{3.11}
\lim_{z\to\infty}z^{\al_1}(\ln z)^{-k_1}|Q^+_1(z)-c_1^+Q_4^+(z)|=0.
\end{equation}
This together with (\ref{3.10}) and similar estimates for $Q^-_1(z)$ yields
(\ref{3.2}) proving (i).

Next we complete the proof of (ii) considering the case $\al_2=\al_1$. By 
(\ref{2.1}),
\begin{equation}\label{3.12}
R_4(u)=\sup_{w\geq u}|c^+_2-w^{\al_2}(\ln w)^{-k_2}P\{ X_2>w\}|\to 0\,\,
\mbox{as}\,\, u\to\infty.
\end{equation}
For each $\ve>0$ choose $u_\ve\geq 1$ so that
\begin{equation}\label{3.13}
R_4(u_\ve)\leq\ve\,\,\mbox{and}\,\, u_\ve\to\infty\,\,\mbox{as}\,\,\ve\to 0.
\end{equation}
If $z$ is large enough so that $z/(\ln z)^\del>u_\ve$ then setting $y=\ln x$ 
we write
\begin{eqnarray*}
&Q_5^{(\ve)}(z)=c_2^+I_2^{(\ve)}(z)\,\,\mbox{where}\\
&I^{(\ve)}_2(z)=z^{-\al_1}\int_{u_\ve}^{z/(\ln z)^\del}x^{-1}
(\ln x)^{k_2}\big(\al_1(\ln\frac zx)^{k_1}-k_1(\ln\frac zx)^{k_1-1}\big)dx\\
&=c^+_2z^{-\al_1}\int_{u_\ve}^{\ln z-\del\ln
\ln z}y^{k_2}\big(\al_1(\ln z-y)^{k_1}
-k_1(\ln z-y)^{k_1-1}\big)dy
\end{eqnarray*}
and
\[
Q_6^{(\ve)}(z)=z^{-\al_1}\int_0^{u_\ve}x^{\al_1-1}
\big(\al_1(\ln\frac zx)^{k_1}-k_1(\ln\frac zx)^{k_1-1}\big)P\{ X_2>x\}dx.
\]
Then
\begin{equation}\label{3.14}
|Q_4^+(z)-Q_5^{(\ve)}(z)-Q_6^{(\ve)}(z)|\leq\ve I_2^{(\ve)}..
\end{equation}
Changing variables $u=\frac y{\ln z}$ and integrating by parts repeatedly
we obtain
\begin{eqnarray}\label{3.15}
&\lim_{z\to\infty}z^{\al_1}(\ln z)^{-(k_1+k_2+1)}I_2^{(\ve)}(z)\\
&=\al_1\int_0^1u^{k_2}(1-u)^{k_1}du=\al_1\frac {\Gam(k_1+1)\Gam(k_2+1)}
{\Gam(k_1+k_2+2)}\nonumber
\end{eqnarray}
since the last integral above is the well known $\beta$-function $B(k_1+1,
k_2+1)$. On the other hand,
\begin{equation}\label{3.16}
\lim_{z\to\infty}z^{\al_1}(\ln z)^{-(k_1+k_2+1)}Q_6^{(\ve)}(z)=0.
\end{equation}

Now observe that by (\ref{3.5}) choosing $\del<\frac {k_1}{\al_1}$ we have
\begin{equation}\label{3.17}
\lim_{z\to\infty}z^{\al_1}(\ln z)^{-(k_1+k_2+1)}|R_1(z)|=0.
\end{equation}
Similarly, by (\ref{3.5}) and (\ref{3.9}),
\begin{equation}\label{3.18}
\lim_{z\to\infty}z^{\al_1}(\ln z)^{-(k_1+k_2+1)}|Q^+_3(z)-Q^+_4(z)|=0.
\end{equation}

It follows from (\ref{3.8}), (\ref{3.9}), (\ref{3.12})--(\ref{3.16}) 
and (\ref{3.18}) that the (finite) limit
\[
\lim_{z\to\infty}z^{\al_1}(\ln z)^{-(k_1+k_2+1)}Q_2^+(z)
\]
exists. Since  $\lim_{z\to\infty} R_2(z)=0$ by (\ref{3.7}) we see from 
(\ref{3.6}) that
\begin{equation}\label{3.19}
\lim_{z\to\infty}z^{\al_1}(\ln z)^{-(k_1+k_2+1)}|Q^+_1(z)-Q^+_2(z)|=0.
\end{equation}
Finally, (\ref{3.8}) and (\ref{3.14})--(\ref{3.19}) together with similar
 estimates for $Q^-_1(z)$ yields (\ref{3.3}) with $"+"$ while (\ref{3.3})
  with $"-"$ follows in the same way.
\end{proof}

\begin{remark}\label{rem3.2}
Suppose that $X_1$ and $X_2$ are positive random variables having the tail
 behavior $P\{ X_i>x\}\sim c_ix^{-\al_i}(\ln x)^{k_i}$, $i=1,2$ as $x\to\infty$
 with $\al_i,c_i>0$ and $k_i\geq 0$, $i=1,2$. Then $Y_i=\ln X_i$, $i=1,2$ have
 the tail behavior $P\{ Y_i>y\}\sim c_ie^{-\al_i}y^{k_i}$, $i=1,2$ as 
 $y\to\infty$. The latter tail behavior (with a proper normalization) is the 
 same as for the $\Gam$-distribution with parameters $k_i+1$ and $\al_i^{-1}$.
 Thus, we can first study the tail behavior of the sum $Y_1+Y_2$ of two
 independent random variables $Y_1$ and $Y_2$ having the $\Gam$-distribution
 with parameters $k_1+1,\al_1^{-1}$ and $k_2+1,\al^{-1}$, respectively, and
 then consider $\exp(Y_1+Y_2)$. This is strightforward when $\al_1=\al_2=\al$
 since then $Y_1+Y_2$ has the $\Gam$-distribution with parameters $k_1+k_2+2,\,
 \al^{-1}$. If $\al_1\ne\al_2$ then one has to obtain the tail behavior of 
 a convolution of two $\Gam$-distributions which involves computation of some
 integrals. Of course, we can take the logarithm only if $X_1$ and $X_2$ are
 positive but negative values can be treated similarly considering positive
 tails of $-X_1$ and $-X_2$. Still, one has to take special care of the cases
 when some of $c_i$'s are zero and when $X_i$'s may take on zero values.
 \end{remark}

Proceeding by induction in the number of variables we derive from Proposition
\ref{prop3.1} the following result.
\begin{corollary}\label{cor3.3}
Theorem \ref{thm2.1} holds true for any monomial 
\[
F(X_1,...,X_\ell)=X_1^{\sig_1}X_2^{\sig_2}\cdots X_\ell^{\sig_\ell},\,\, 
\sig_j\in\bbZ_+
\]
where $X_1,...,X_\ell$ are independent random variables satisfying (\ref{2.1}).
\end{corollary}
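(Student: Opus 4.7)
The plan is to reduce the corollary to a statement about products of independent random variables each satisfying (2.1), and then induct on $\ell$ using Proposition~\ref{prop3.1}. For a monomial, $|\Theta|=1$, so parts (i) and (ii) of Theorem~\ref{thm2.1} collapse to the one-dimensional tail asymptotic (2.4), which by the Resnick characterization cited before (2.2) is equivalent to (2.3) with a measure $\nu$ supported on $S^0=\{-1,+1\}$. Thus it suffices to establish (2.4) with $c^F_\pm\geq 0$ and $c^F_++c^F_->0$.

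First I would observe that each factor $U_i:=X_i^{\sig_i}$ itself satisfies (2.1) with exponent $\tilde\al_i:=\al_i/\sig_i$ and logarithmic exponent $k_i$: for odd $\sig_i$ this is a direct change of variable $z\mapsto z^{1/\sig_i}$ in (2.1), while for even $\sig_i$ we have $U_i\geq 0$ and the two tails of $X_i$ merge into one, so that $\bfP\{U_i>z\}\sim(c_i^++c_i^-)\sig_i^{-k_i}z^{-\tilde\al_i}(\ln z)^{k_i}$. This reduces Corollary~\ref{cor3.3} to showing that for independent factors $U_1,\ldots,U_\ell$ with these parameters, the product $Y=U_1\cdots U_\ell$ satisfies (2.1) with exponent $\al_*=\min_i\tilde\al_i$ and log-power $k_*=p-1+\sum_{i\in J}k_i$, where $J=\{i:\tilde\al_i=\al_*\}$ and $p=|J|$.

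I would now induct on $\ell$, the base case $\ell=1$ being handled above. For the inductive step set $V:=U_1\cdots U_{\ell-1}$ with parameters $(\al',k')$ supplied by the induction hypothesis and write $Y=V\cdot U_\ell$. Three sub-cases arise. If $\tilde\al_\ell>\al'$, apply Proposition~\ref{prop3.1}(i) with $X_1=V$, $X_2=U_\ell$; this yields tail exponent $\al'$ and log-power $k'$, matching the updated $\al_*,k_*$ since $\ell\notin J$. If $\tilde\al_\ell<\al'$, reverse the roles: Proposition~\ref{prop3.1}(i) with $X_1=U_\ell$, $X_2=V$ applies because $V$'s tail of order $z^{-\al'}(\ln z)^{k'}$ satisfies (3.1) for any intermediate $\al_2\in(\tilde\al_\ell,\al')$, so the log factor is absorbed; the new parameters $(\tilde\al_\ell,k_\ell)$ correspond to $J=\{\ell\}$, $p=1$. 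If $\tilde\al_\ell=\al'$, Proposition~\ref{prop3.1}(ii) produces log-power $k'+k_\ell+1=(p'+1)-1+\sum_{i\in J'\cup\{\ell\}}k_i$, which is exactly the updated $k_*$. Positivity propagates: in the equal-exponent case Proposition~\ref{prop3.1}(ii) gives $c^++c^-=({c'}^++{c'}^-)(\tilde c_\ell^++\tilde c_\ell^-)>0$, while in the unequal cases the integrand in (3.2) is strictly positive on the side with nontrivial tail.

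The main obstacle I expect is bookkeeping: ensuring Proposition~\ref{prop3.1}(i) applies in the case $\tilde\al_\ell<\al'$ despite the logarithmic correction in $V$'s tail (resolved by the intermediate $\al_2$ above), and verifying that the combinatorial identities for $J$, $p$, $k$ update correctly at each induction step.
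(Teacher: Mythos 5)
Your argument follows the paper's proof essentially verbatim: reduce to one dimension via the $m=1$ case of the Resnick correspondence, observe via (3.20)--(3.22) that each $X_i^{\sigma_i}$ satisfies (2.1) with parameters $(\alpha_i/\sigma_i,\,k_i)$, and then induct on $\ell$ using Proposition~\ref{prop3.1}. The paper simply writes ``we derive the result for $ZY_n$ from Proposition~\ref{prop3.1}'' without spelling out the three exponent-comparison sub-cases, which you make explicit (including the useful remark that when $\tilde\alpha_\ell<\alpha'$ one applies part~(i) with the roles swapped, with the logarithmic factor in $V$'s tail absorbed by choosing an intermediate $\alpha_2$), but the underlying reasoning is identical.
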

\begin{proof}
Observe first that the assertions (i) and (ii) of Theorem \ref{thm2.1} 
coincide here since now $F$ consists of only one monomial, and so the
descriptions of the tail behavior (\ref{2.3}) and (\ref{2.4}) are 
equivalent in view of Theorem 3.6 in \cite{Re} and Lemma \ref{l5.1} of
Appendix.

Next, set $Y_i=X_i^{\sig_i}$, $i=1,...,\ell$. Then (\ref{2.1}) implies that for 
$i=1,...,\ell$,
\begin{equation}\label{3.20}
\lim_{z\to\infty}z^{\al_i/\sig_i}(\ln z)^{-k_i}P\{ -Y_i>z\}=0\,\,\,\mbox{if}
\,\, \sig_i\,\,\,\mbox{is even and}
\end{equation}
\begin{equation}\label{3.21}
\lim_{z\to\infty}z^{\al_i/\sig_i}(\ln z)^{-k_i}P\{\pm Y_i>z\}=
\lim_{z\to\infty}z^{\al_i/\sig_i}(\ln z)^{-k_i}P\{ \pm X_i>z^{1/\sig_i}\}=
\sig_i^{-k_i}c_i^\pm
\end{equation}
provided $\sig_i$ is odd and, furthermore, if $\sig_i>0$ is even then
\begin{eqnarray}\label{3.22}
&\lim_{z\to\infty}z^{\al_i/\sig_i}(\ln z)^{-k_i}P\{ \pm Y_i>z\}
=\lim_{z\to\infty}\\
&z^{\al_i/\sig_i}(\ln z)^{-k_i}(P\{ X_i>z^{1/\sig_i}\}+
P\{ X_i<-z^{1/\sig_i}\})=\sig_i^{-k_i}(c_i^++c_i^-).\nonumber
\end{eqnarray}
Next, we proceed by induction in $\ell$. For $\ell=1$ the result follows
from (\ref{3.20})--(\ref{3.22}). Suppose that it still holds true for
$\ell=1,2,...,n-1$. In order to obtain it for $\ell=n$ we set 
$Z=Y_1\cdots Y_{n-1}$. Then by the induction hypothesis 
\[
\lim_{z\to\infty}z^\al(\ln z)^{-k}P\{\pm Z>z\}=c^\pm
\]
for some $\al,\, k$ and $c^\pm$ described in Theorem \ref{thm2.1}. Since $Y_n$
 satisfies (\ref{3.20})--(\ref{3.22}) we derive the result for $ZY_n$ from 
 Proposition \ref{prop3.1} completing both the induction step and the proof
 of this corollary.
\end{proof}

Next, we derive (\ref{2.6}) of Example \ref{ex2.2}. We write
\begin{equation}\label{3.23}
Q_0(z)=P\{ X_1X_2>z\}=2Q_1(z)+R_1(z)
\end{equation}
where $Q_1(z)=Q_1^+(z)=Q_1^-(z)$ and $R_1(z)$ are the same as in (\ref{3.4})
and now
\begin{equation}\label{3.24}
|R_1(z)|\leq P\{ |X_2|>\frac z{(\ln z)^\del}\}\leq Cz^{-\al}(\ln z)^{\al
\del}\big(\ln\frac z{(\ln z)^\del}\big)^{-1}
\end{equation}
for some $C>0$.
Similarly to (\ref{3.6}) we have also
\begin{equation}\label{3.25}
|Q_1^+(z)-Q_2^+(z)|\leq\frac 1{c^+_1}Q_2^+(z)R_2(z)
\end{equation}
where 
\begin{equation}\label{3.26}
Q_2(z)=c_1E{\bf 1}_{0<X_2<\frac z{(\ln z)^\del}}(\frac {X_2}z)^\al=c_1z^{-\al}
\int_0^{z/(\ln z)^\del}x^\al dP\{ X_2\leq x\}
\end{equation}
and
\[
R_2(z)=\sup_{u\geq(\ln z)^\del}|u^{\al}P\{ X_1>u\}-c_1|.
\]
Next, we write
\begin{equation}\label{3.27}
Q_2(z)=Q_3(z)-R_3(z)
\end{equation}
where
\[
Q_3(z)=c_1\al z^{-\al}\int_0^{z/(\ln z)^\del}x^{\al-1}dP\{ X_2>x\}dx-R_3(z)
\]
and by (\ref{3.23}),
\begin{equation}\label{3.28}
R_3(z)=\frac {c_1}{(\ln z)^{\del\al}}P\{ X_2>\frac z{(\ln z)^\del}\}\leq
Cc_1z^{-\al}\big(\ln\frac z{(\ln z)^\del}\big)^{-1}.
\end{equation}

Now, we write
\begin{equation}\label{3.29}
Q_3(z)=Q_4^{(\ve)}(z)+Q_5^{(\ve)}(z)+R_4^{(\ve)}(z)
\end{equation}
where
\begin{equation}\label{3.30}
Q_4^{(\ve)}(z)=c_1\al z^{-\al}\int_0^{u_\ve}x^{\al-1}P\{ X_2>x\} dx
\leq c_1z^{-\al}u_\ve^\al,
\end{equation}
\begin{equation}\label{3.31}
Q_5^{(\ve)}(z)=c_1c_2\al z^{-\al}\int_{u_\ve}^{z/(\ln z)^\del}x^{-1}
(\ln x)^{-1}dx=c_1c_2\al z^{-\al}\ln\big(\frac {\ln-\del\ln\ln z}
{\ln u_\ve}\big)
\end{equation}
and
\begin{equation}\label{3.32}
|R^{(\ve)}_4(z)|\leq\frac \ve{c_2\al} Q_5^{(\ve)}(z)=\ve c_1z^{-\al}
\ln\big(\frac {\ln-\del\ln\ln z}{\ln u_\ve}\big)
\end{equation}
provided $u_\ve\to\infty$ as $\ve\to 0$ is chosen so that 
\[
\sup_{w\geq u_\ve}|c_2-w^\al(\ln w)P\{ X_2>w\}|\leq\ve.
\]

By (\ref{3.29})--(\ref{3.32}),
\begin{equation}\label{3.33}
\lim_{z\to\infty}z^{\al}(\ln\ln z)^{-1}Q_3(z)=\lim_{\ve\to 0}\lim_{z\to\infty}
Q_5^{(\ve)}(z)=c_1c_2\al
\end{equation}
which together with (\ref{3.23})--(\ref{3.28}) yields (\ref{2.6}). \qed

\section{Tails of polynomials}\label{sec4}\setcounter{equation}{0}

In order to prove Theorem \ref{thm2.1} we will view the collection of
monomials which compose the polynomial $F$ as a vector and fit them 
into the following setup. For $i=1,2,...,M$ let $Z_i=(Z_{i1},Z_{i2},...,
Z_{im_i})$ be $m_i$-dimensional random vectors and $\mathcal W_i$ be the
spaces of bounded continuous functions $W$ on $R^{m_i}$ satisfying 
(\ref{2.1+}). Suppose that for each $i$ and any $W\in\cW_i$,
\begin{equation}\label{4.1}
\lim_{\rho\to\infty}\rho^\al(\ln\rho)^{-k}E[W(\frac {Z_i}\rho)]=
\int_0^\infty\int_{S^{m_i-1}}W(sz)\frac {\nu_i(ds)dz}{|z|^{1+\al}}
\end{equation}
where $\al>0$ and $\nu_i$ are measures on $S^{m_i-1}$, $i=1,...,M$ with 
$\nu_i(S^{m_i-1})>0$. Let also $Z_0=(Z_{01},...,Z_{0,m_0})$ be a 
$m_0$-dimensional random vector such that
\begin{equation}\label{4.2}
\lim_{\rho\to\infty}\rho^\al(\ln\rho)^{-k}P\{ |Y_0|>\rho\}=0.
\end{equation}

\begin{lemma}\label{lem4.1} With the above notations consider the
$m=\sum_{i=0}^Mm_i$-dimensional random vector $Z=(Z_0,Z_1,...,Z_M)$ and
assume that
\begin{equation}\label{4.3}
\lim_{\rho\to\infty}\rho^\al(\ln\rho)^{-k}P\{ |Z_{il_1}|>\rho,\,
|Z_{jl_2}|>\rho\}=0
\end{equation}
for any $i,j=1,...,M,\, i\ne j$ and $1\leq l_1\leq m_i,\, 1\le l_2\leq m_j$.
Then for each bounded continuous $W$ on $R^m$ satisfying (\ref{2.1+})
the limit,
\begin{equation}\label{4.4}
\lim_{\rho\to\infty}\rho^\al(\ln\rho)^{-k}E[W(\frac Z\rho)]=\int_0^\infty
\int_{S^{m-1}}W(sx)\frac {\nu(ds)dx}{|x|^{1+\al}}
\end{equation}
exists with a measure $\nu$ supported on $S^{m-1}\cap (\cup_{i=1}^M\Gam_i)$
where $\Gam_i=(0,...,0,R^{m_i},0,...,0),\, i=1,...,M$. Moreover, the 
projection to $R^{m_i}$ of the restriction $\nu |_{\Gam_i}$ coinsides with 
$\nu_i,\, i=1,...,M$.
\end{lemma}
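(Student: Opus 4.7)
The plan is to localize the mass of the rescaled measures $\mu_\rho(\cdot):=\rho^\alpha(\ln\rho)^{-k}P\{Z/\rho\in\cdot\}$ on the axes $\Gamma_i$ by combining (4.1), (4.2), and (4.3). Fix $W\in\cW$ and $\eta>0$. With $B_i:=\{|Z_i|>\eta\rho\}$ for $i=0,1,\ldots,M$, partition the sample space as $A_0^\eta\cup\bigcup_{i=1}^M A_i^\eta\cup A_*^\eta$, where $A_0^\eta=\bigcap_{i=0}^M B_i^c$, $A_i^\eta=B_0^c\cap B_i\cap\bigcap_{j\ne i,\,j\ge 1}B_j^c$ for $i\ge 1$, and $A_*^\eta$ is the remainder (either block $0$ is large, or two different blocks $\ge 1$ are simultaneously large). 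A union bound together with (4.2) applied to $P(B_0)$ and (4.3) applied to each $P(B_i\cap B_j)$ gives $\rho^\alpha(\ln\rho)^{-k}P(A_*^\eta)\to 0$, so the $A_*^\eta$-contribution to $E[W(Z/\rho)]$ is negligible after scaling.

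On $A_0^\eta$ all $|Z_j/\rho|\le\eta$, hence $|W(Z/\rho)|\le C|Z/\rho|^2\le C\sum_j|Z_j/\rho|^2$ for $\eta$ small. I plan to extract the tail $P\{|Z_j|>t\}\sim c_j t^{-\alpha}(\ln t)^k$ from (4.1) (by testing against bounded continuous approximations of an indicator), then use integration by parts on $\int_0^{\eta\rho}2t\,P\{|Z_j|>t\}\,dt$ to obtain $\rho^{-2}E[|Z_j|^2;|Z_j|\le\eta\rho]=O(\eta^{2-\alpha}\rho^{-\alpha}(\ln\eta\rho)^k)$ for $j\ge 1$, and an analogous but $o(\cdot)$ bound for $j=0$ from (4.2). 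Since $\alpha<2$, this produces $\limsup_\rho\rho^\alpha(\ln\rho)^{-k}|E[W(Z/\rho);A_0^\eta]|=O(\eta^{2-\alpha})\to 0$ as $\eta\to 0$.

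The heart of the argument concerns $E[W(Z/\rho);A_i^\eta]$ for $i\ge 1$. Let $\pi_i,\iota_i$ be the block projection and inclusion, set $W_i(y):=W(\iota_i\pi_i y)$, and split $W(Z/\rho)=W_i(Z/\rho)+R_i(Z/\rho)$. The main term $E[W_i(Z/\rho);A_i^\eta]$ differs from $E[W_i(Z_i/\rho)]$ only by integrals over $A_0^\eta$ and $A_*^\eta$, already shown to be negligible, so (4.1) applied to $W_i$ yields the scaled limit $\int_0^\infty\int_{S^{m_i-1}}W(\iota_i(sz))\,\nu_i(ds)\,dz/|z|^{1+\alpha}$. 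The remainder $E[R_i(Z/\rho);A_i^\eta]$ is the main obstacle. To control it I truncate: let $\chi_{a,b}$ be a continuous bump equal to $1$ on $[2a,b/2]$ and supported in $[a,b]$, and put $W^{a,b}(y):=W(y)\chi_{a,b}(|y|)$. The pointwise bound $|W-W^{a,b}|\le C|y|^2{\bf 1}_{|y|\le 2a}+C{\bf 1}_{|y|\ge b/2}$, combined with the tail estimates above and (4.1), gives an approximation error $\varepsilon(a,b)\to 0$ as $a\to 0$, $b\to\infty$, uniformly in $\rho$. For $\eta<a$, on $A_i^\eta$ the support constraint forces $W^{a,b}(\iota_j\pi_j(Z/\rho))=0$ for $j\ne i$ and $|Z_i/\rho|\le b\sqrt 2$, and uniform continuity of $W^{a,b}$ on a compact set yields $|R_i^{a,b}(Z/\rho){\bf 1}_{A_i^\eta}|\le\omega(\sqrt M\,\eta){\bf 1}_{\{|Z_i|\ge a\rho/\sqrt 2\}}$ with modulus $\omega(\cdot)\to 0$; since $\rho^\alpha(\ln\rho)^{-k}P\{|Z_i|\ge a\rho/\sqrt 2\}$ is bounded by (4.1), sending $\rho\to\infty$ and then $\eta\to 0$ kills this term.

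Sending $\rho\to\infty$, $\eta\to 0$, $a\to 0$, $b\to\infty$ in that order assembles (4.4) with $\nu$ the measure on $S^{m-1}$ supported in $S^{m-1}\cap\bigcup_{i=1}^M\Gamma_i$ whose restriction to $S^{m-1}\cap\Gamma_i$ is the pushforward of $\nu_i$ under $\iota_i$, exactly as asserted. The hardest step is the remainder bound: a direct appeal to uniform continuity of $W$ on $R^m$ would fail because $W\in\cW$ need not be uniformly continuous on the whole space, and only after truncating away both a neighborhood of $0$ and a neighborhood of infinity does a usable modulus of continuity become available; the tail decay of $\mu_\rho$ coming from the quadratic-cutoff bound in the definition of $\cW$ is precisely what makes the truncation errors controllable.
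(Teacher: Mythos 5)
Your proof is correct, and it is a careful, quantitative implementation of exactly what the paper's two-sentence sketch gestures at: localize the vague limit of $\rho^\al(\ln\rho)^{-k}P\{Z/\rho\in\cdot\}$ on the union of the axes $\Gam_i$ via (4.2)--(4.3), then identify the restriction to each $\Gam_i$ via (4.1). What you add beyond the paper is the explicit machinery needed to make this rigorous: the partition $A_0^\eta, A_i^\eta, A_*^\eta$, the use of the quadratic cut-off in the definition of $\cW$ to dispose of the near-origin mass, and -- the genuinely nonobvious step that the paper silently elides -- the truncation $W^{a,b}$ to obtain a usable modulus of continuity so that the cross-term $E[R_i(Z/\rho);A_i^\eta]$ can be bounded by $\omega_{a,b}(\sqrt M\,\eta)$ times the tail probability of $|Z_i|$. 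One small bookkeeping imprecision: when you write that $E[W_i(Z/\rho);A_i^\eta]$ differs from $E[W_i(Z_i/\rho)]$ ``only by integrals over $A_0^\eta$ and $A_*^\eta$'', the correct statement is that it also differs by the terms over $A_j^\eta$ for $j\ne i$, $j\ge 1$; but on those events $|Z_i|\le\eta\rho$, so they are controlled by the same $O(\eta^{2-\al})$ estimate as the $A_0^\eta$ piece, and the conclusion is unaffected. Subject to that cosmetic fix, the argument is complete, and the order of limits $\rho\to\infty$, $\eta\to 0$, $a\to 0$, $b\to\infty$ works as you indicate.
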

\begin{proof} 
In view of (\ref{4.2}) and (\ref{4.3}) any weak limit as $\rho\to\infty$ of 
the distributions
\[
\rho^\al(\ln\rho)^{-k}P\{\frac Z\rho\in\cdot\}
\]
has support on $\cup_{i=1}^M\Gam_i$, and by (\ref{4.1}) the limiting measure
$\nu$ exists and the projection of $\nu |_{\Gam_i}$ to $R^{m_i}$ coincides
with $\nu_i$.
\end{proof}

We will need also the following result where we denote by $\cW$ and $\cW_m$
the spaces of bounded continuous functions on $R$ and $R^m$, respectively,
satisfying (\ref{2.1+}).

\begin{lemma}\label{lem4.2}
Let  a scalar random variable  $V$ be such that for some $\alpha, k$ with 
$0<\alpha<2$, $k\ge 0$ and all $W\in\cW$,

 $$
\lim_{\rho\to\infty} \rho^\alpha(\ln \rho)^{-k}E[W\big(\frac{V}{\rho}\big)]=
\int_{-\infty}^\infty (c_-{\bf 1}_{x<0}+c_+{\bf 1}_{x>0})W(x)\frac{dx}{|x|^{1+
\alpha}}.
$$
Next, let $Y$ be  a random vector in $R^m$ independent of $V$ and satisfying
$E[|Y|^\alpha]<\infty$.  Then the tail behavior of the  vector $Z=VY$ is
given by the limit
\begin{equation}\label{4.5}
\lim_{\rho\to\infty} \rho^\al(\ln \rho)^{-k} E[ W(\frac{Z}{\rho})]=
\int_0^\infty\int_{S^{m-1}} W(sx)\frac{\nu(ds)\,dx}{|x|^{1+\alpha}}
\end{equation}
which holds true for any $W\in\cW_m$. The measure $\nu(ds)$ on $S^{m-1}$ is 
computed for subsets $A\subset S^{m-1}$ from the identity
$$
E[\int_0^\infty (c_+W(xY)+c_-W(-xY)] \frac{dx}{x^{1+\alpha}})dx=\int_{S^{m-1}}
\int_0^\infty W(sr)\frac{\nu(ds)dr}{r^{1+\alpha}}
$$
as
$$
\nu(A)=c_+\int_{\widehat A} |{\bf y}|^\alpha d\lambda+c_-\int_{-\widehat A} 
|{\bf y}|^\alpha d\lambda
$$
where ${\widehat A}$ is the cone $\cup_{\sigma>0}\sigma A$, ${-\widehat A} =
\cup_{\sigma<0} \,\sigma A$ and $\lambda$ is the distribution of $Y$ on $R^m$.
\end{lemma}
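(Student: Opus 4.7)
The plan is to condition on $Y$ and reduce to the one-dimensional hypothesis on $V$. By independence of $V$ and $Y$, Fubini gives
\[
\rho^{\al}(\ln\rho)^{-k}E[W(Z/\rho)]=\int_{\bbR^m}\rho^{\al}(\ln\rho)^{-k}E[W(Vy/\rho)]\,\la(dy),
\]
and for each $y\ne 0$ the function $\tilde W_y(t):=W(ty)$, $t\in\bbR$, is bounded, continuous and satisfies $|\tilde W_y(t)|\le C_y\,t^2/(1+t^2)$ for some constant $C_y$ depending on $|y|$, hence lies in $\cW$ in dimension one. The hypothesis on $V$ therefore yields the pointwise limit
\[
\lim_{\rho\to\infty}\rho^{\al}(\ln\rho)^{-k}E[W(Vy/\rho)]=\int_{-\infty}^\infty (c_-{\bf 1}_{x<0}+c_+{\bf 1}_{x>0})W(xy)\,\frac{dx}{|x|^{1+\al}}=:\phi(y)
\]
for every $y\ne 0$ (the case $y=0$ is trivial since $W(0)=0$ by (\ref{2.1+})).

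The main technical step is to interchange this limit with the $\la$-integration by dominated convergence. Using $|W(z)|\le C|z|^2/(1+|z|^2)\le C\min(1,|z|^2)$ and the tail bound $P(|V|>u)\le C_0(1+u)^{-\al}(\ln(2+u))^k$ (which is equivalent to the one-dimensional hypothesis via Lemma~\ref{l5.1}), I would split
\[
E[|W(Vy/\rho)|]\le C\Big(\frac{|y|^2}{\rho^2}E\big[V^2{\bf 1}_{|V|\le\rho/|y|}\big]+P(|V|>\rho/|y|)\Big),
\]
and a standard integration by parts in the first term, combined with the tail bound in the second, produces
\[
\rho^{\al}(\ln\rho)^{-k}E[|W(Vy/\rho)|]\le C_1|y|^{\al}\Big(\frac{\ln(2+\rho/|y|)}{\ln(2+\rho)}\Big)^k.
\]
A case analysis in $|y|$ (splitting at $|y|=1$ and at $|y|=\rho^{1/2}$) then shows that for all sufficiently large $\rho$ this envelope is dominated by a function of the form $C_2(1+|y|^{\al})$: on $|y|\le 1$ one uses that $|y|^{\al}(\ln(1/|y|))^k$ is bounded on $(0,1]$; on $1<|y|\le\rho^{1/2}$ the log ratio is bounded by a constant; and on $|y|>\rho^{1/2}$ one combines $|W|\le\|W\|_\infty$ with the trivial bound $E[\min(1,V^2|y|^2/\rho^2)]\le 1$ and uses $P(|Y|>\rho^{1/2})\le E[|Y|^{\al}]/\rho^{\al/2}$. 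Since $E[|Y|^{\al}]<\infty$, dominated convergence applies and the limit passes inside the integral.

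Finally, one identifies the measure $\nu$ by unfolding $\int_{\bbR^m}\phi(y)\la(dy)$ in polar form. Writing
\[
\int_{\bbR^m}\phi(y)\la(dy)=c_+\int_{\bbR^m}\!\int_0^\infty W(xy)\frac{dx}{x^{1+\al}}\la(dy)+c_-\int_{\bbR^m}\!\int_0^\infty W(-xy)\frac{dx}{x^{1+\al}}\la(dy),
\]
and substituting $r=x|y|$, $s=y/|y|$ (resp.\ $s=-y/|y|$) in the two terms, one obtains $dx/x^{1+\al}=|y|^{\al}dr/r^{1+\al}$ and $\pm xy=rs$. Interpreting the resulting integrals as pushforwards of $c_+|y|^{\al}\la(dy)$ along $y\mapsto y/|y|$ and of $c_-|y|^{\al}\la(dy)$ along $y\mapsto -y/|y|$ produces exactly the measure $\nu$ of the statement, via $\nu(A)=c_+\int_{\widehat A}|y|^{\al}d\la+c_-\int_{-\widehat A}|y|^{\al}d\la$. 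The single most delicate step is the uniform bound in the dominated-convergence argument: the slowly varying factor $(\ln\rho)^k$ does not commute cleanly with the scaling in $y$, so one must track the log ratio $(\ln(\rho/|y|)/\ln\rho)^k$ across the full range of $|y|$, and it is here that the hypothesis $E[|Y|^{\al}]<\infty$ is genuinely used.
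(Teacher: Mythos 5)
Your proposal follows the same overall route as the paper: condition on $Y$, apply the one-dimensional hypothesis to $x\mapsto W(xy)$ pointwise, pass the limit inside the $\lambda$-integral by dominated convergence with an envelope $\le C(1+|y|^\alpha)$, and then read off $\nu$ from a polar substitution. The one genuine difference lies in how the dominating envelope is produced: the paper uses a slick self-similar trick --- it takes the uniform bound $\sup_{\rho\ge 3,\,|{\bf y}'|=1}\rho^\alpha(\ln\rho)^{-k}E[W(V{\bf y}'/\rho)]\le C$, replaces $\rho$ by $\rho'=\rho/|{\bf y}|$, and uses $(\rho/\rho')^\alpha(\ln\rho/\ln\rho')^{-k}\le|{\bf y}|^\alpha$ for $\rho'\le\rho$ --- whereas you go through the explicit tail bound $P(|V|>u)\le C_0(1+u)^{-\alpha}(\ln(2+u))^k$ and an integration by parts for $E[V^2{\bf 1}_{|V|\le\rho/|y|}]$. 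Both arrive at the same envelope and both rely on $E[|Y|^\alpha]<\infty$; the paper's version is slightly shorter, while yours makes the role of the logarithmic factor more visible.

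One local slip worth fixing: your third case (the split at $|y|=\rho^{1/2}$) does not work as written. Bounding $E[|W(Vy/\rho)|]\le\|W\|_\infty$ there gives only $\rho^\alpha(\ln\rho)^{-k}\|W\|_\infty$, which is \emph{not} dominated by $C(1+|y|^\alpha)$ when, say, $|y|\asymp\rho^{1/2}$; and the auxiliary estimate $\rho^\alpha(\ln\rho)^{-k}\,\|W\|_\infty\,P(|Y|>\rho^{1/2})\le\rho^{\alpha/2}(\ln\rho)^{-k}\,\|W\|_\infty\,E[|Y|^\alpha]$ diverges rather than vanishes, so it cannot be used to discard that region. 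Fortunately the split is also unnecessary: your intermediate bound $\rho^\alpha(\ln\rho)^{-k}E[|W(Vy/\rho)|]\le C_1|y|^\alpha(\ln(2+\rho/|y|)/\ln(2+\rho))^k$ already furnishes the envelope $C_1|y|^\alpha$ for \emph{all} $|y|\ge 1$, because $\rho/|y|\le\rho$ there and hence the log ratio is $\le 1$; combined with your $|y|\le 1$ case this gives $C(1+|y|^\alpha)$ uniformly in $\rho\ge 3$, and dominated convergence goes through. So simply drop the $\rho^{1/2}$ threshold and use two cases ($|y|\le 1$ and $|y|>1$).
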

\begin{proof} Given $W\in \mathcal W$, for each fixed ${\bf y}\in R^m$ as a
function of $x$, $W(x{\bf y})\in \mathcal W$ on $R$.  Therefore
$$
\lim_{\rho\to\infty}\rho^\alpha (\ln \rho)^{-k} E[W(\frac{V{\bf y}}{\rho})]=
\int_0^\infty (c_+W(x{\bf y})+c_-W(-x{\bf y})) \frac{dx}{x^{1+\alpha}}
$$
Moreover, the convergence is easily seen to be uniform over  bounded sets of 
${\bf y}$. Hence if  $\rho\ge 3$,
$$
\sup_{\rho\ge 3}\sup_{\|{\bf y}\|\le 1}\rho^\alpha (\ln \rho)^{-k} 
E[W(\frac{V{\bf y}}{\rho})]\le C.
$$ 
If $1\le |{\bf y}|\le \frac{\rho}{3}$ then setting  
$\rho'=\frac{\rho}{|{\bf y}|}
\ge 3$, writing ${\bf y}=|{\bf y}|{\bf y'}$ with $|{\bf y}'|=1$ and observing
 that $\rho'\le \rho$ we obtain
\begin{eqnarray*}
&\rho^\alpha (\ln \rho)^{-k} E[W(\frac{V{\bf y}}{\rho})] =\rho^\alpha 
(\ln \rho)^{-k} E[W(\frac{V{\bf y}'}{\rho |{\bf y}|^{-1}})]
=\rho^\alpha (\ln \rho)^{-k} E[W(\frac{V{\bf y}'}{\rho'})]\\
&\le \frac{\rho^\alpha (\ln \rho)^{-k} }{(\rho')^\alpha (\ln \rho')^{-k} }
(\rho')^\alpha (\ln \rho')^{-k}E[W(\frac{V{\bf y}'}{\rho'})]
\le C\frac{\rho^\alpha (\ln \rho)^{-k} }{(\rho')^\alpha (\ln \rho')^{-k} }
\le C |{\bf y}|^\alpha. 
\end{eqnarray*}
If $|{\bf y}|\ge \frac{\rho}{3}$ and $\ln \rho\ge 1$ then 
$$
\rho^\alpha (\ln \rho)^{-k} E[W(\frac{V{\bf y}}{\rho})]\le \|W\|_\infty 
\rho^\alpha \le  C|{\bf y}|^\alpha.
$$
We can now apply the dominated convergence theorem to conclude that
$$
\lim_{\rho\to\infty}\rho^\alpha (\ln \rho)^{-k} E[W(\frac{VY}{\rho})]=
E\int_0^\infty (c_+W(xY)+c_-W(-xY)) \frac{dx}{x^{1+\alpha}}.
$$
Replacing $W(\cdot)$ by $W(\sigma\, \cdot)$ and changing variables we see 
that
$$
E\int_0^\infty (c_+W(xY)+c_-W(-xY)) \frac{dx}{x^{1+\alpha}}
$$
 is homogeneous of order $\alpha$ under dilation and therefore can be 
 expressed as 
 $$
 \int_{S^{m-1}} \int_0^\infty  W(rs)\frac{\nu(ds)dr}{r^{1+\alpha}}
$$
where for a Borel $A\subset S^{m-1}$,
$$
\nu(A)=c_+\int_{\widehat A} |{\bf y}|^\alpha d\lambda+c_-\int_{-\widehat A} 
|{\bf y}|^\alpha d\lambda
$$
with $\lambda$ being the distribution of $Y$.  If $Y$ is a scalar then the
 L\'{e}vy measure on $R$ is given for each Borel $A\subset R$ by
$$
\nu(A)=\int_A(c^*_- {\bf 1}_{x<0}+c^*{\bf 1}_{x>0})\frac{dx}{|x|^{1+\alpha}}
$$
where
 $$
 c^*_-=c_+\int_{-\infty}^0 |y|^\alpha d\lambda+c_-\int_{0}^\infty |y|^\alpha  
 d\lambda
 $$
 and
  $$
  c^*_+=c_-\int_{-\infty}^0 |y|^\alpha d\lambda+c_+\int_{0}^\infty 
  |y|^\alpha d\lambda.
  $$
 \end{proof}

In order to apply Lemma \ref{4.1} to the collection of monomials composing
the polynomial $F$  we will need the following result which will
ensure the compliance with the condition (\ref{4.3}).

\begin{lemma}\label{lem4.3}
Let $Y=V_1V_2$ and $Z=V_2V_3$ where $V_1,\, V_2$ and $V_3$ are independent
random variables such that 
\begin{equation}\label{4.6}
\lim_{z\to\infty}z^\al(\ln z)^{-u_i}P\{\pm V_i>z\}=v^{\pm}_i
\end{equation}
where $\al>0,\, u_i\geq 0,\, v_i^++v_i^->0,\, i=1,2,3$. Then
\begin{equation}\label{4.7}
\limsup_{z\to\infty}z^\al(\ln z)^{-u_2}P\{ |Y|>z,\, |Z|>z\}<\infty.
\end{equation}
\end{lemma}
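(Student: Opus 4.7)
The plan is to condition on $V_2$ and exploit the independence of $V_1$ and $V_3$. Writing $G_i(\rho)=P\{|V_i|>\rho\}$, independence yields
\[
P\{|Y|>z,\,|Z|>z\}=E\bigl[G_1(z/|V_2|)\,G_3(z/|V_2|)\,\mathbf 1_{V_2\neq 0}\bigr].
\]
The hypothesis (\ref{4.6}) gives, for some constants $C_i$ and some $t_0\ge 1$, the bounds $G_i(t)\le C_i t^{-\al}(\ln t)^{u_i}$ for $t\ge t_0$, together with the crude bound $G_i(t)\le 1$ for all $t$.

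I would then fix a constant $M>\max(t_0,\,e^{(u_1+u_3)/(2\al)})$ and split the expectation according to whether $|V_2|>z/M$ or $|V_2|\le z/M$. In the first regime, the trivial bound $G_1 G_3\le 1$ reduces the contribution to $P\{|V_2|>z/M\}$, which by (\ref{4.6}) is $O(z^{-\al}(\ln z)^{u_2})$, already of the desired order. In the second regime, $z/|V_2|\ge M\ge t_0$, so the power-log tail bounds for $V_1,V_3$ apply and the contribution is controlled by
\[
Cz^{-2\al}\,E\bigl[|V_2|^{2\al}\bigl(\ln(z/|V_2|)\bigr)^{u_1+u_3}\mathbf 1_{|V_2|\le z/M}\bigr].
\]
The problem thus reduces to showing that this expectation is $O(z^{\al}(\ln z)^{u_2})$.

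To bound the last expectation, I would represent it by Fubini as $\int_0^{z/M} f'(s)\bigl(F(s)-F(z/M)\bigr)ds$, where $f(t)=t^{2\al}(\ln(z/t))^{u_1+u_3}$ and $F(s)=P\{|V_2|>s\}$. The choice of $M$ makes $f$ nondecreasing on $[0,z/M]$, so $f'(s)\ge 0$ and the integrand is controlled by $f'(s)F(s)$. Splitting the integration at $s=M$, the piece on $[0,M]$ is bounded by $f(M)=M^{2\al}(\ln z)^{u_1+u_3}$, which is negligible compared to $z^{\al}(\ln z)^{u_2}$. For the piece on $[M,z/M]$ I would substitute $s=z^u$ so that $\ln s=u\ln z$ and $\ln(z/s)=(1-u)\ln z$, reducing the integral to the form $(\ln z)^{u_1+u_2+u_3+1}\int_{\epsilon}^{1-\epsilon}z^{\al u}(1-u)^{u_1+u_3}u^{u_2}du$ with $\epsilon=\ln M/\ln z$. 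Substituting further $\tau=\al(1-u)\ln z$ turns the inner integral into an incomplete Gamma-type integral, yielding the sharp asymptotic $z^{\al}(\ln z)^{-(u_1+u_3+1)}$ times a bounded factor; multiplied by the prefactor $(\ln z)^{u_1+u_2+u_3+1}$, this gives exactly $z^{\al}(\ln z)^{u_2}$.

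The main obstacle is precisely this delicate balance in the exponent of the logarithm: the naive bound $(\ln(z/s))^{u_1+u_3}(\ln s)^{u_2}\le(\ln z)^{u_1+u_2+u_3}$ is too generous and yields an extra factor $(\ln z)^{u_1+u_3}$. One must exploit the identity $\ln(z/s)+\ln s=\ln z$ via the Laplace-type change of variables above, which shows that the integral $\int_\epsilon^{1-\epsilon}z^{\al u}(1-u)^{u_1+u_3}u^{u_2}du$ concentrates near $u=1$ and produces a compensating factor $(\ln z)^{-(u_1+u_3+1)}$, leaving only $(\ln z)^{u_2}$ as required. Once this single estimate is established, combining the two regimes yields (\ref{4.7}).
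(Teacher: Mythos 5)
Your proof is correct, but it takes a genuinely different route from the paper's. The paper's argument is a two-line reduction: it starts from the pointwise inequality $|Y|>z,\ |Z|>z\Rightarrow|YZ|^{1/2}>z$, giving
\[
P\{|Y|>z,\,|Z|>z\}\le P\{|YZ|^{1/2}>z\}=P\{|V_1V_3|^{1/2}\,|V_2|>z\},
\]
then observes (Proposition~\ref{prop3.1}(ii) applied to $V_1V_3$, and a change of variable for the square root) that $|V_1V_3|^{1/2}$ has tail index $2\al$ with logarithmic power $u_1+u_3+1$, and finally applies Proposition~\ref{prop3.1}(i) to the product $|V_1V_3|^{1/2}\,|V_2|$: the heavier-tailed factor $V_2$ dominates and the surviving logarithmic exponent is $u_2$. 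You instead condition on $V_2$, use independence of $V_1,V_3$ to factorize the conditional probability as $G_1(z/|V_2|)G_3(z/|V_2|)$, and estimate the resulting integral from scratch via the Fubini rewriting $E[f(|V_2|)\mathbf 1_{|V_2|\le z/M}]=\int_0^{z/M}f'(s)(F(s)-F(z/M))\,ds$ (with $M$ chosen so $f'\ge 0$ there) followed by the Laplace-type substitutions $s=z^u$, $\tau=\al(1-u)\ln z$. Your computation is right and does indeed recover the compensating factor $(\ln z)^{-(u_1+u_3+1)}$ needed to cancel the naive overcount, landing precisely on $(\ln z)^{u_2}$; you have correctly identified the point where the crude bound $(\ln(z/s))^{u_1+u_3}(\ln s)^{u_2}\le(\ln z)^{u_1+u_2+u_3}$ would be too lossy. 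What your route buys is a self-contained, explicit estimate that does not re-invoke Proposition~\ref{prop3.1} and even exhibits the Gamma-function constant $\Gamma(u_1+u_3+1)/\al^{u_1+u_3+1}$; what the paper's route buys is brevity and economy by reducing the joint tail to a single product tail already handled earlier, which is enough since only a $\limsup$ bound is wanted here.
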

\begin{proof} Observe that
\begin{equation}\label{4.8}
P\{ |Y|>z,\, |Z|>z\}\leq P\{ |YZ|^{1/2}>z\}=P\{ |V_1V_2|^{1/2}|V_2|>z\}.
\end{equation}
By (\ref{4.6}) and Proposition \ref{prop3.1},
\[
\limsup_{z\to\infty}z^{2\al}(\ln z)^{-(u_1+u_3+1)}P\{ |V_1V_2|^{1/2}>z\}
<\infty,
\]
and so, again, by (\ref{4.6}) and Proposition \ref{prop3.1},
\[
\limsup_{z\to\infty} z^\al(\ln z)^{-u_2}P\{ |V_1V_3|^{1/2}|V_2|>z\}<\infty
\]
which together with (\ref{4.8}) gives (\ref{4.7}).
\end{proof}

\begin{corollary}\label{cor4.3+}
Let $\theta_1,\theta_2$ be two multi indices  with corresponding monomials 
$g_{\theta_1}=\Pi_{j=1}^\ell x_j^{ \sigma_j}$ and $g_{\theta_2}=\Pi_{j=1}^\ell
 x_j^{\tau_j}$. If $\alpha(\theta_1)=\alpha(\theta_2)=\alpha^\ast$ and 
 $k(\theta_1)=k(\theta_2)=k^\ast$ but $J(\theta_1)\not= J(\theta_2)$, then
$$
\lim_{x\to\infty}  x^{\alpha^\ast}(\log x)^{-k^\ast} P[|g_{\theta_1}(X_1,
\ldots,X_\ell) |\ge  x, \,|g_{\theta_2}(X_1,\ldots,X_\ell )|\ge  x]=0
$$
\end{corollary}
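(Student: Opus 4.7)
The plan is to bound the joint event by a single-monomial event via the elementary inequality: if $|g_{\te_1}| \geq x$ and $|g_{\te_2}| \geq x$ then $|g_{\te_1} g_{\te_2}| \geq x^2$. Writing $\te_3 = \te_1 + \te_2 = (\sig_1+\tau_1,\ldots,\sig_\ell+\tau_\ell)$ componentwise, the product $g_{\te_1} g_{\te_2} = g_{\te_3}$ is again a monomial in $X_1, \ldots, X_\ell$, so that
\[
P[|g_{\te_1}(X_1,\ldots,X_\ell)| \geq x,\ |g_{\te_2}(X_1,\ldots,X_\ell)| \geq x] \leq P[|g_{\te_3}(X_1,\ldots,X_\ell)| \geq x^2],
\]
and by Corollary \ref{cor3.3} applied to $g_{\te_3}$ the right-hand side is of order $x^{-2\al(\te_3)}(\ln x)^{k(\te_3)}$ with $\al(\te_3)$ and $k(\te_3)$ computed from the Section \ref{sec2} formulas applied to $\te_3$.

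Next I would work out $\al(\te_3)$ and $k(\te_3)$ by a case analysis on each index $j$. For $j \in J(\te_1) \cap J(\te_2)$, the equalities $\al_j/\sig_j = \al_j/\tau_j = \al^*$ force $\sig_j = \tau_j$ and give $\al_j/(\sig_j + \tau_j) = \al^*/2$. For every other $j$, at least one of $\sig_j < \al_j/\al^*$ and $\tau_j < \al_j/\al^*$ is a strict inequality, so $\sig_j + \tau_j < 2\al_j/\al^*$ and $\al_j/(\sig_j + \tau_j) > \al^*/2$. Hence $\al(\te_3) \geq \al^*/2$, with equality iff $J(\te_1) \cap J(\te_2) \neq \emptyset$, in which case $J(\te_3) = J(\te_1) \cap J(\te_2)$.

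If $J(\te_1) \cap J(\te_2) = \emptyset$ then $2\al(\te_3) > \al^*$, and the claim follows immediately even without the logarithmic correction. The main step is the remaining case, where $2\al(\te_3) = \al^*$ and one must establish the strict inequality $k(\te_3) < k^*$. Using $k^* = k(\te_1) = |J(\te_1)| - 1 + \sum_{j \in J(\te_1)} k_j$ and $k(\te_3) = |J(\te_1) \cap J(\te_2)| - 1 + \sum_{j \in J(\te_1) \cap J(\te_2)} k_j$ yields the telescoping identity
\[
k^* - k(\te_3) = |J(\te_1) \setminus J(\te_2)| + \sum_{j \in J(\te_1) \setminus J(\te_2)} k_j,
\]
and a symmetric identity with the roles of $\te_1$ and $\te_2$ interchanged. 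Since $J(\te_1) \neq J(\te_2)$, at least one of $J(\te_1) \setminus J(\te_2)$ and $J(\te_2) \setminus J(\te_1)$ is nonempty, which gives $k^* - k(\te_3) \geq 1$. Combining the two cases, $x^{\al^*}(\ln x)^{-k^*} P[|g_{\te_3}| \geq x^2] = O((\ln x)^{k(\te_3)-k^*}) \to 0$, as required. The only real subtlety is this symmetric-difference argument in the second case; the replacement $\ln x^2 = 2 \ln x$ merely alters the implicit multiplicative constant.
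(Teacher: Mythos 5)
Your proof is correct, and it makes fully explicit an argument that the paper states rather tersely. The paper's proof routes through Lemma \ref{lem4.3}: it sets $Y=g_{\te_1}$, $Z=g_{\te_2}$, chooses $V_2$ to be the product of the variables indexed by $J(\te_1)\cap J(\te_2)$, and declares that "the result follows immediately." You instead bound the joint event by $P[|g_{\te_3}|\ge x^2]$ with $\te_3=\te_1+\te_2$ and apply Corollary \ref{cor3.3} directly to the product monomial $g_{\te_3}$, then do the bookkeeping on $\al(\te_3)$ and $k(\te_3)$. Both approaches rest on the same elementary bound $P\{|Y|\ge x,\,|Z|\ge x\}\le P\{|YZ|\ge x^2\}$ (this is exactly the first line of the paper's proof of Lemma \ref{lem4.3}), so the content is essentially the same; but your version is cleaner in two respects. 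First, the factors $V_1=g_{\te_1}/V_2$ and $V_3=g_{\te_2}/V_2$ that the paper implicitly uses may share variables $X_j$ for $j\notin J(\te_1)\cap J(\te_2)$, so they need not be mutually independent as Lemma \ref{lem4.3} formally requires; going directly through Corollary \ref{cor3.3} applied to the single monomial $g_{\te_3}$ sidesteps this issue entirely. Second, you spell out what the paper leaves implicit: the case split on whether $J(\te_1)\cap J(\te_2)$ is empty, the identification $J(\te_3)=J(\te_1)\cap J(\te_2)$ when nonempty, and the strict inequality $k(\te_3)<k^*$ obtained from the nonemptiness of at least one of the symmetric differences (where the hypothesis $k(\te_1)=k(\te_2)$ forces, in fact, both $J(\te_1)\setminus J(\te_2)$ and $J(\te_2)\setminus J(\te_1)$ to be nonempty). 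A minor point worth noting in passing is that $\al(\te_3)<2$ always holds, since some $\sig_j+\tau_j\ge 1$ while all $\al_j<2$, so Corollary \ref{cor3.3} is genuinely applicable to $g_{\te_3}$ in both cases.
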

\begin{proof} In Lemma \ref{lem4.3} set $Y=g_{\te_1}(X_1,...,X_\ell)$,
$Z=g_{\te_1}(X_1,...,X_\ell)$ and $V_2=\prod_{i=1}^mX_{j_i}$ where
$j_1,...,j_m\in J(\te_1)\cap J(\te_2)$ while if the latter intersection
is empty we take $V=1$. Now, the result follows immediately from Lemma 
\ref{lem4.3}.
\end{proof} 

Now we are able to complete the proof of Theorem \ref{thm2.1}. Order
 arbitrarily the sets $J(\te),\,\te\in\Te_*$ and denote these different
  sets by $J_1,J_2,...,J_M$. Next, we define vectors $Z_i,\, i=1,...,M$ so that
  $Z_i$ consists of different monomials $g_\te,\,\te\in\Te_*$ having the form
  \begin{equation}\label{4.9}
  g_\te=(\prod_{j\in J_i}x_j)^{\sig_*}\prod_{l\not\in J_i,\sig_l\in\te}
  x^{\sig_l}_l
  \end{equation}
  observing that $\sig_l<\sig_*$ here for all $l\not\in J_i$. Define
  also the vector $Z_0$ which consists of monomials $g_\te,\,\te\in\Te
  \setminus\Te_*$ taken with an arbitrary order. In order to obtain
  (\ref{4.1}) for $Z_i,\, i\geq 1$ consider random variables
   $V_i=(\prod_{j\in J_i}X_j)^{\sig_*}$ and form random vectors $Y_i$ which
   consist of random monomials $\prod_{l\not\in J_i,\sig_l\in\te}X_l^{\sig_l}$,
   so that $Z_i=V_iY_i$. Next, the conditions of Lemma \ref{lem4.2} are
   verified relying on Corollary \ref{cor3.3} which enables us to apply 
   Lemma \ref{lem4.2} in order to
   obtain (\ref{4.1}) with $\al=\al_*$ and $k=k_*$. Now, for such $\al$ and
    $k$ the condition (\ref{4.2}) follows from Corollary \ref{cor3.3} and the 
   condition (\ref{4.3}) follows from Lemma \ref{lem4.3} using 
   Corollary \ref{cor3.3}. Hence, applying Lemma \ref{lem4.1} we derive
   the assertion (i) of Theorem \ref{thm2.1}. Since taking the sum of
   components of vectors is a particular case of a linear map the assertion 
   (ii) of Theorem \ref{thm2.1} follows from (i), Lemma \ref{l5.1} and the
    following observation.
   
   \begin{lemma}\label{lem4.4} (see Proposition 7.3 of \cite{Re}) Suppose 
   that a random $m$-dimensional vector
   $Z$ satisfies (\ref{4.4}) for some measure $\nu$ on $S^{m-1}$ with
   $\nu(S^{m-1})>0$ and any bounded continuous function $W$ satisfying
   (\ref{2.1+}). Let $T:R^m\to R^d$, $d\leq m$ be a linear map. Then
   $Z'=TZ$ will again satisfy (\ref{4.4}) with $Z',\nu'$ and $d$ in place
   of $Z,\nu$ and $m$ where $\nu'$ is defined for any Borel set $\Gam\subset
   S^{d-1}$ by
   \[
   \nu'(\Gam)=\int_{S^{m-1}}{\bf 1}_\Gam(\frac {Ts}{|Ts|})|Ts|^\al\nu(ds).
   \]
   \end{lemma}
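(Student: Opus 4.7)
The plan is to derive the limit for $Z'=TZ$ by pulling test functions back through $T$. Given any $W'\in\mathcal W_d$, set $W(z)=W'(Tz)$ on $R^m$. Boundedness and continuity of $W$ are immediate. For (\ref{2.1+}) applied to $W$: since $W'$ satisfies (\ref{2.1+}) we have in particular $W'(0)=0$ and $|W'(y)|\le C|y|^2$ near the origin, so using $|Tz|\le\|T\|\,|z|$ one gets $|W(z)|\le C\|T\|^2|z|^2$ for small $|z|$, while for large $|z|$ boundedness of $W$ suffices. Combining yields $|W(z)|\le C'|z|^2/(1+|z|^2)$, hence $W\in\mathcal W_m$.

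With this $W$ the hypothesis (\ref{4.4}) applied to $Z$ and the identity $E[W(Z/\rho)]=E[W'(TZ/\rho)]=E[W'(Z'/\rho)]$ yield
\[
\lim_{\rho\to\infty}\rho^\al(\ln\rho)^{-k}E[W'(Z'/\rho)]=\int_0^\infty\int_{S^{m-1}}W'(xTs)\,\frac{\nu(ds)\,dx}{|x|^{1+\al}}.
\]
It then suffices to rewrite the right-hand side in the form required by (\ref{4.4}) for $Z'$ on $R^d$.

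On the set $\{s\in S^{m-1}:Ts=0\}$ the integrand vanishes identically because $W'(0)=0$, so that set makes no contribution. For $s$ with $Ts\ne 0$ decompose $Ts=|Ts|\,s'$ with $s'=Ts/|Ts|\in S^{d-1}$ and perform the substitution $u=|Ts|\,x$ in the radial integral; this generates a Jacobian factor $|Ts|^\al$, and the double integral becomes
\[
\int_{S^{m-1}}|Ts|^\al\,\Big(\int_0^\infty W'(us')\,\frac{du}{|u|^{1+\al}}\Big)\,\nu(ds).
\]
The inner bracket depends on $s$ only through $s'\in S^{d-1}$, so defining $\nu'$ on $S^{d-1}$ as the pushforward of the finite measure $|Ts|^\al\nu(ds)$ (restricted to $\{Ts\neq 0\}$) under $s\mapsto Ts/|Ts|$ recovers exactly the formula stated in the lemma. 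By the very definition of pushforward the expression equals $\int_{S^{d-1}}\int_0^\infty W'(us')\,du/|u|^{1+\al}\,\nu'(ds')$, which is the desired limit.

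There is no serious obstacle here; the only delicate points are (i) verifying that $W'\circ T$ lies in $\mathcal W_m$ when $T$ is not injective, which is handled by the fact that (\ref{2.1+}) forces $W'(0)=0$, and (ii) interpreting the pushforward when $\nu(\{s:Ts=0\})>0$, which again is harmless because the vanishing of $W'$ at the origin removes this subset from all integrals of interest. The finiteness of $\nu'$ on $S^{d-1}$ follows from $\nu(S^{m-1})<\infty$ together with the bound $|Ts|^\al\le\|T\|^\al$.
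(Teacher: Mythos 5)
Your proposal is correct and follows exactly the route the paper takes: the paper's proof is a one-sentence sketch noting that one should pull back a test function $\tilde W\in\mathcal W_d$ through $T$ to obtain $W(x)=\tilde W(Tx)\in\mathcal W_m$ and then apply (\ref{4.4}). You have simply filled in the details (verifying $W'\circ T\in\mathcal W_m$, the radial substitution $u=|Ts|x$ producing the Jacobian factor $|Ts|^\al$, and the pushforward interpretation of $\nu'$, with the observation that the set $\{Ts=0\}$ contributes nothing since $W'(0)=0$ and $|Ts|^\al=0$ there).
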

   \begin{proof} The result follows considering bounded continuous functions
   $\tilde W$ on $R^d$ which satisfy (\ref{2.1+}) while observing that $W(x)
   =\tilde W(Tx)$ is a bounded continuous function on $R^m$ satisfying 
   (\ref{2.1+}) to which we can apply (\ref{4.4}).
   \end{proof}

 \section{Limit theorem in the $\ell$-dependence case}\label{sec5}
  \setcounter{equation}{0}

In this section we will prove Theorem \ref{thm2.3}. We are dealing here 
with the sums
\begin{equation}\label{5.1}
S_N(\te,t)=\sum_{1\leq n\leq Nt}g_\te(X_n,X_{n+1},...,X_{n+\ell-1}),\,
\te\in\Te
\end{equation}
containing the summands which form a stationary $\ell$-dependent sequence. 
By this reason our proof will rely on the following result which appears
in \cite{TK} as Corollary 1.4.

\begin{proposition}\label{prop5.1} Let $\{ Z_n\}$ be a stationary
$\ell$-dependent sequence with values in $R^d$ which satisfies (\ref{4.1})
and such that for any $j=2,3,...,\ell$ and $\del>0$,
\begin{equation}\label{5.2}
\lim_{N\to\infty}NP\{ |Z_1|>\del b_N\,\,\mbox{and}\,\, |Z_j|>\del b_N\}=0
\end{equation}
where $b_N=N^{\frac 1\al}(\frac {\ln N}\al)^{\frac k\al}$. Set $a_N=
E(\frac {b_N^2Z_1}{b_N^2+|Z_1|^2})$. Then the process
\begin{equation}\label{5.3}
\Up_N(t)=\frac 1{b_N}(\sum_{1\leq n\leq Nt}Z_n-Nta_N)
\end{equation}
weakly converges in the $J_1$ topology on $D([0,T],\, R^d)$ to an $\al$-stable
L\' evy process with the same L\' evy measure as $Z_1$.
\end{proposition}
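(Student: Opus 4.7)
The plan is to prove this functional stable limit theorem by the point process method: establish weak convergence of the empirical point process of scaled summands to a Poisson random measure, then use a continuous-mapping argument to translate it into $J_1$ convergence of the summation process. Introduce the point process on $[0,T]\times(R^d\setminus\{0\})$
\[
N_N=\sum_{1\le n\le NT}\delta_{(n/N,\, Z_n/b_N)} .
\]
The regular variation condition (4.1) applied to the marginal $Z_1$ ensures that for each $\epsilon>0$ the mean measure of $N_N$ restricted to $\{|x|>\epsilon\}$ converges to $dt\otimes\mu$, where $\mu$ is the Lévy measure produced by the limit in (4.1) (homogeneous of index $-\alpha$). The target is weak convergence $N_N\Rightarrow N$ to the Poisson random measure with this mean.

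I would carry out the point process convergence by a blocking argument exploiting $\ell$-dependence. Partition $\{1,\ldots,N\}$ into $k_N\sim N/r_N$ consecutive blocks of length $r_N$ with $r_N\to\infty$, $r_N/N\to 0$, chosen so that $r_N\,P\{|Z_1|>\epsilon b_N\}\to 0$ for every $\epsilon>0$; this is possible because $N\,P\{|Z_1|>\epsilon b_N\}$ stays bounded by the definition of $b_N$. Since the sequence is $\ell$-dependent, discarding the last $\ell$ indices of every block renders the reduced blocks mutually independent, while stationarity with $r_N\gg\ell$ makes the probabilistic mass of the buffer negligible. Within each reduced block, condition (5.2) combined with stationarity gives
\[
k_N\cdot r_N^2\cdot\max_{2\le j\le\ell+r_N}P\{|Z_1|>\epsilon b_N,\,|Z_j|>\epsilon b_N\}\longrightarrow 0,
\]
so the probability of two or more scaled entries exceeding $\epsilon$ in a single block is $o(1/k_N)$, making the block's point-process contribution asymptotically Bernoulli with intensity $\mu(\{|x|>\epsilon\})/k_N$. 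Kallenberg's convergence criterion for simple point processes then yields $N_N\Rightarrow N$.

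Next I would pass from the point-process convergence to $J_1$ convergence of $\Upsilon_N$ by the standard truncation scheme. For $\epsilon>0$ set $\Upsilon_N^{(\epsilon)}(t)=b_N^{-1}\sum_{n\le Nt} Z_n \mathbf{1}_{|Z_n|>\epsilon b_N}-Nt\,c_N^{(\epsilon)}$ with $c_N^{(\epsilon)}$ the analogous truncated version of $a_N$. The functional that sends a point measure on $[0,T]\times\{|x|>\epsilon\}$ to the cumulative sum of its atoms is continuous into $D([0,T],R^d)$ with the $J_1$ topology at configurations with no coincident time-atoms, so $\Upsilon_N^{(\epsilon)}\Rightarrow L^{(\epsilon)}$ in $J_1$, where $L^{(\epsilon)}$ is the compound Poisson process driven by $N|_{\{|x|>\epsilon\}}$ with the matching drift. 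The small-jump remainder is controlled by an $L^2$ maximal inequality: $\ell$-dependence reduces the partial sum to at most $\ell+1$ sub-sums of independent terms, and regular variation with $\alpha<2$ gives
\[
E\Big[\sup_{t\le T}\big|\Upsilon_N(t)-\Upsilon_N^{(\epsilon)}(t)\big|^2\Big]\le C\,\epsilon^{2-\alpha}
\]
uniformly in $N$; the choice of centering $a_N$ (versus the truncated mean) contributes only an asymptotically constant drift, as noted in Remark 2.5. Sending $\epsilon\to 0$ after $N\to\infty$ and applying the Billingsley approximation lemma for weak convergence upgrades this to $\Upsilon_N\Rightarrow L$ in $J_1$, with $L$ the $\alpha$-stable Lévy process of Lévy measure $\mu$.

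The hard part is the Poisson approximation, specifically showing that no clustering of large values arises inside a block. This is precisely what the anti-clustering hypothesis (5.2) rules out, but to make the argument close one must select $r_N$ reconciling the competing demands $r_N\gg\ell$, $r_N/N\to 0$, $r_N\,P\{|Z_1|>\epsilon b_N\}\to 0$ for each fixed $\epsilon$, and the $o(1/k_N)$ bound on double exceedances produced by (5.2); a diagonal choice of $r_N$ along a countable family of $\epsilon$'s suffices. Once the Poisson limit is in hand, tightness in $J_1$ follows automatically from the point-process framework, since the limiting PRM almost surely has distinct time atoms and the sum-of-atoms functional is continuous on such configurations.
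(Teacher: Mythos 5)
Your proposal is sound in outline, but it is not the route this paper takes: what you sketch is the classical point-process proof (Durrett--Resnick, Davis, Resnick), i.e.\ precisely the machinery behind Corollary 1.4 of \cite{TK}, which the paper only invokes by citation, whereas the paper's own self-contained argument in the Appendix deliberately avoids point processes. There the sum is cut into blocks of length $k$ separated by gaps of length $\ell-1$, so the block sums are i.i.d.; the stable functional limit theorem for i.i.d.\ vectors (\cite{Rv}, Section 7.2 of \cite{Re}) is applied to the block-sum process, the gap contribution is suppressed by the maximal estimate of Lemma \ref{lem6.2}, the $J_1$ modulus-of-continuity bounds are transferred from block sums back to the original partial sums using exactly the anti-clustering condition (\ref{5.2}) (on the complement of the set $F(N,\theta,\delta)$ at most one summand per block can be large), and finally $k\to\infty$ via the Prokhorov-distance Cauchy argument as at the end of Section \ref{sec6}. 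The paper's route is more elementary (no weak convergence of random measures) and also yields finite-dimensional convergence with an explicit cluster L\'evy measure $M^*$ when (\ref{5.2}) fails, which is what Theorem \ref{thm2.3}(i) requires; your route is more modular and gives the Poisson description of the large jumps directly, but covers only the situation where (\ref{5.2}) holds -- which is all Proposition \ref{prop5.1} asks. One bookkeeping caveat: the displayed claim $k_N r_N^2\max_{2\le j\le \ell+r_N}P\{|Z_1|>\epsilon b_N,\,|Z_j|>\epsilon b_N\}\to 0$ does not follow from (\ref{5.2}) for an arbitrary $r_N\to\infty$, since the maximum is only $o(1/N)$, giving $o(r_N)$; either retain your diagonal choice of $r_N$ growing slowly enough relative to the decay rate in (\ref{5.2}), or, more simply, split the pairs in a block by lag -- at most $\ell r_N$ pairs with lag $<\ell$, each $o(1/N)$ by (\ref{5.2}), and at most $r_N^2$ pairs with lag $\ge\ell$, each $O(N^{-2})$ by $\ell$-dependence -- which yields the required $o(1/k_N)$ double-exceedance bound for every $r_N\to\infty$ with $r_N=o(N)$.
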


 Actually, the identification of the limiting L\' evy measure is not stated 
 explicitly in Corollary 1.4 of \cite{TK} but this follows from the proof there.
In Appendix we will exhibit a more general result for the $\ell$-dependent 
stationary sequences which will yield either convergence in the $J_1$ topology
or only of finite dimensional distributions depending on whether the condition (\ref{5.2}) 
is assumed or not and we will describe in both cases limiting L\' evy measures. We will
 give a direct proof there unlike Corollary 1.4 of \cite{TK} which follows from a more
general result whose proof relies on the point processes machinery.
 
 Observe that the one dimensional version of Proposition \ref{prop5.1} is
 applicable to each sum $S_N(\te,t),\,\te\in\Te_*$ in (\ref{5.1}). Indeed,
 each $g_\te,\te\in\Te_*$ has the form
 \begin{equation}\label{5.4}
 g_\te(x_1,...,x_\ell)=(\prod_{i=1}^{p_*}x_{j_i}^{\sig_*})f_\te(x_{p_*+1},...,
 x_\ell)
 \end{equation}
 where $J(\te)=(j_1,j_2,...,j_{{p_*}})$ and $f_\te$ is a monomial in 
 complementary to $x_{j_1},...,x_{j_{p_*}}$ variables rised to powers
  lower than $\sig_*$. Then for $n_1\ne n_2$ we can apply Lemma \ref{lem4.3}
  setting there $Y=g_\te(X_{n_1},X_{n_1+1},...,X_{n_1+\ell-1})$,
  $Z=g_\te(X_{n_2},X_{n_2+1},...,X_{n_2+\ell-1})$ and defining $V_2$ as
  the common part of the products $\prod_{i=1}^{p_*}X^{\sig_*}_{n_1+j_i-1}$
  and $\prod_{i=1}^{p_*}X^{\sig_*}_{n_2+j_i-1}$ (taking $V_2=1$ if these
  products do not have a common part). This will yield the condition
  (\ref{5.2}) for the (scalar) sequence $Z_n=g_\te(X_n,X_{n+1},...,
  X_{n+\ell-1})$.
  
  The main problem in application of Proposition \ref{prop5.1} to the
  vector sums $\{ S_N(\te,t),\,\te\in\Te\}$ is that, in general, we may have pairs
   of large (vector) summands there so that the condition (\ref{5.2}) will
   not hold true. Indeed, consider a simple example with $F(x_1,x_2)=x_1+x_2$
   and $q_1(n)=n,\, q_2(n)=n+1$ so that $\te_1=(1,0),\,\te_2=(0,1)$ and
   $g_{\te_i}(x_1,x_2)=x_i,\, i=1,2$. Then both vectors $Z_n=
   (g_{\te_1}(X_n,X_{n+1}))=(X_n,X_{n+1})$ and $Z_{n+1}=(X_{n+1},X_{n+2})$ 
   may be large in norm if $|X_{n+1}|$ is large. In other words, 
   the probability
   $P\{ |Z_1|>\del b_N,\, |Z_2|>\del b_N\}$ may be of the same order as 
   $P\{ |X_1|>\del b_N\}$ which is of order $1/N$. We observe that in this
   situation there is no weak convergence of the process $\Up_N$ from 
   (\ref{5.3}) in the $J_1$ topology. Indeed, if this convergence would take
    place then also the process $\xi_N(t)=b_N^{-1}\sum_{1\leq n\leq Nt}
    (X_n+X_{n+1})$  would weakly converge in the $J_1$ topology which is
    false as shown in \cite{AT}.
    
     In order to adjust our sums to requirements of Proposition \ref{prop5.1}
     we will perform a rearrangement procedure which will produce new sums
     $\cS_N(\te,t)$ not much different from $S_N(\te,t)$ where large pairs
     of summands can emerge only with negligible probability. In the above
     simple example we will set $\cZ_n=(X_{n+1},X_{n+1})$ for $n\geq 2$
     and the latter vector sequence will satisfy the conditions of 
     Proposition \ref{prop5.1}. This yields the convergence in the $J_1$ 
     topology of the process $\tilde\Up_N(t)$ obtained from $\Up_N(t)$ by
     replacing $Z_n$ by $\cZ_n$ but the estimate
     \[
     |\sum_{1\leq n\leq Nt}Z_n-\sum_{2\leq n\leq Nt}\cZ_n|\leq |X_1|
     +|X_{[Nt]}|
     \]
     does not enable us to obtain convergence of $\Up_N$ in the $J_1$ 
     topology but only provide weak convergence of finite dimensional 
     distributions. In Appendix we will exhibit an alternative approach which 
will yield directly convergence of finite dimensional distributions without the
 condition (\ref{5.2}) as required in Theorem \ref{thm2.3}(i) and the rearrangement
which is special for the polynomial setup here will not be used there.
     
     In order to deal with the general case we consider the disjoint subsets
     $\Psi_1,\Psi_2,...,\Psi_r$ of $\Te_*$ which are not singletons and
     such that for each pair $\te_1,\te_2\in\Psi_i$ the sets $J(\te_1)=
     (i_1,...,i_{p_*})$ and $J(\te_2)=(j_1,...,j_{p_*})$ have the differences
     $j_l-i_l,\, l=1,...,p_*$ equal to a constant independent of $l$. If
     there are no such (non singleton) subsets of $\Te_*$ then we are in the 
     circumstances of the second part of Theorem \ref{thm2.3} where there is
     no need in a rearrangement applying directly Proposition \ref{prop5.1}
     and this case will be discussed later on. Thus, we assume now that such
     subsets exist. In each $\Psi_i,\, i=1,...,r$ choose $\te_i$ such that 
     $J(\te_i)=(j_{i1},j_{i2},...,j_{ip_*})$ has the maximal first index
     $j_{i1}$ where we set the order $j_{i1}<j_{i2}<...<j_{ip_*}$ in 
     $J(\te_i)$. Then there exist integers $0=a_{i1}<a_{i2}<...<a_{iz_i}
     <j_{i1}$ such that $\Psi_i=\{\te_{i1},\te_{i2},...,\te_{iz_i}\}$
     and $J(\te_{il})=(j_{i1}-a_{il},j_{i2}-a_{il},...,j_{ip_*}-a_{il})$ for
     each $l=1,2,...,z_i$.
     
      The goal of our rearrangement procedure is to
     produce new vector summands $\cY_n(\te),\,\te\in\Te_*$ which will
      replace the summands $Y_n(\te)=g_\te(X_n,X_{n+1},...,X_{n+\ell-1}),\,
      \te\in\Te_*$ in (\ref{5.1}) so that Proposition \ref{prop5.1} could
      be applied to the new sum while the difference between these two sums
      can be controlled.
      
      If $\te$ does not belong to some $\Psi_i$ (in particular, if $\te\in
      \Te\setminus\Te_*$) defined above then we set $\cY_n(\te)=Y_n(\te)$. 
      Now, suppose that $\te=\te_{il}$ for some $1\leq l\leq z_i$. then for 
      all $n\geq\ell$ set
      \begin{equation}\label{5.5}
      \cY_n(\te)=g_{\te}(X_{n+a_{il}},X_{n+a_{il}+1},...,X_{n+a_{il}+\ell-1})=
      Y_{n+a_{il}}(\te).
      \end{equation}
      Let
      \begin{equation}\label{5.6}
      \cS_N(\te,t)\sum_{\ell\leq n\leq Nt}\cY_n(\te).
      \end{equation}
      It is easy to see that
      \begin{equation}\label{5.7}
      |S_N(\te,t)-\cS_N(\te,t)|\leq(\sum_{1\leq n\leq 2\ell}+
      \sum_{Nt\leq n\leq Nt+\ell})|g_\te(X_n,X_{n+1},...,X_{n+\ell-1})|.
      \end{equation}
      Relying on Lemma \ref{lem4.3} it is easy to see that the vector 
      summands $\cZ_n=(\cY_n(\te),\,\te\in\Te)$ satisfy the condition
      (\ref{5.2}) of Proposition \ref{prop5.1} (considered with $\cZ_n$ in
      place of $Z_n$), and so as $N\to\infty$ the processes $\Up_N(t),\, 
      t\in[0,T]$ defined
       by (\ref{5.3}) (again, with $\cZ_n$ in place of $Z_n$) weakly converge
       in the $J_1$ topology on $D([0,T],\, R^m)$ to an $\al_*$-stable vector
       L\' evy
       process $\{\Xi(\te,t),\,\te\in\Te\}$, $t\in [0,T]$. The estimate
       (\ref{5.7}) enable us to conclude from here that all finite dimensional 
       distributions of the vector process $\{\Xi_N(\te,t),\,\te\in\Te\}$,
       $t\in [0,T]$ weakly converge to the corresponding finite dimensional
       distributions of $\{\Xi(\te,t),\,\te\in\Te\}$, $t\in[0,T]$. Clearly,
       $\frac 1{b_N}\{\Xi_N(\te,t),\,\te\in\Te\setminus\Te_*\}$ will converge
       to zero in probability.

     Still, the estimate (\ref{5.7}), in general, does not yield weak 
     convergence of the vector process $\{\Xi_N(\te,\cdot),\,\te\in\Te\}$
     in any of Skorokhod's topologies. Indeed, take $F(x_1,x_2)=x_1-x_2$,
     $Z_n=(X_n,-X_{n+1})$ with $X_j$ satisfying (\ref{2.1a}) and define 
     $\Up_N$ by (\ref{5.3}). The weak convergence of $\Up_N(\cdot)$ as $N\to
     \infty$ would imply the weak convergence of the process $\frac 1b_N
     (X_{[Nt]}-X_1),\, t\in[0,T]$ which does not converge in any of
     Skorokhod's topologies as explained in \cite{AT}.
     
     The above provides the proof of the assertion (i) of Theorem 
     \ref{thm2.3}. In order to derive the assertion (ii) we observe that
      since there exist no non singleton subsets $\Psi_i$ satisfying 
      conditions of the above proof it follows that already the vector 
      summands $Z_n=(Y_n(\te),\,\te\in\Te)$ satisfy (\ref{5.2}) of
      Proposition \ref{prop5.1}, and so the vector process
      $\Up_N(t)=\{\Xi_N(\te,t),\,\te\in\Te\}$, $t\in[0,T]$ weakly converges
      in the $J_1$-topology to an $\al_*$-stable L\' evy process $\{\Xi(\te,t),
      \,\te\in\Te\}$, $t\in[0,T]$. 
      
      In order to prove that the component processes $\Xi(\te,\cdot)$ will 
      be independent for different $\te$ it suffices to show that the 
      L\' evy measure of the vector process $\{\Xi(\te,\cdot),\,\te\in\Te\}$
      will be concentrated on axes. Relying on Section 4 from \cite{TK} we
      conclude that the latter follows if the random vector
      $Z=(Y_1(\te),\,\te\in\Te)$ satisfies (\ref{4.4}) with the L\' evy
      measure $\nu$ supported on the axes, i.e.
      \[
      \nu\{ (x_1,...,x_2):\, |x_i|>0\,\,\mbox{and}\,\,|x_j|>0,\, i\ne j\}=0.
      \]
      The latter will hold true if we show that for any $\te,\tilde\te\in\Te$,
      $\te\ne\tilde\te$ and $\del>0$,
      \begin{equation}\label{5.8}
      \lim_{\rho\to\infty}\rho^{\al_*}(\ln\rho)^{-k_*}P\{|Y_1(\te)|>\rho\,\,
      \mbox{and}\,\,|Y_1(\tilde\te)|>\rho\}=0.
      \end{equation}
      If $\te,\tilde\te\in\Te_*$ and there exists no integer $r$ such that
      $J(\tilde\te)=J(\te)+r$ (in the sense of Theorem \ref{thm2.3}(ii))
      then we can represent $Y_1(\te)=V_1V_2$ and $Y_1(\tilde\te)=V_2V_3$
      where $V_1,V_2$ and $V_3$ satisfy conditions of Lemma \ref{lem4.3}
      and then (\ref{5.8}) follows from there. If, say, $\te\in\Te\setminus
      \Te_*$ then (\ref{5.8}) still holds true since 
      \[
      P\{|Y_1(\te)|>\rho\,\, \mbox{and}\,\,|Y_1(\tilde\te)|>\rho\}\leq
      P\{|Y_1(\te)|>\rho\}
      \]
      taking into account that for $\te\in\Te\setminus\Te_*$ the random 
      variables $Y_1(\te)$ have faster decaying tail probabilities than
      for $\te\in\Te_*$.

      The assertion
      (iii) follows from (i) and (ii) taking into account properties of
      weak convergence in the $J_1$ topology.  Namely, regarding weak 
      convergence of finite dimensional distributions of vector processes
      it is clear that this remains true also for sums of components of
      vectors. Furthermore, if a sequence of $R^d$ curves converges in
      the $J_1$-topology then the corresponding time changes are the same
       for all $d$-components, and so the same time changes work also for
        the sums of components of these curves.   \qed

\section{Limit theorem in the arithmetic progression case}\label{sec6}
  \setcounter{equation}{0}

Now we turn to the situation where
\begin{eqnarray*}
 &S_N(\te,t)=\sum_{1\le n\le Nt}Y_n(\te),\, Y_n(\te)=g_\te(X_n,X_{2n},...,
 X_{\ell n}),\, Y_n=\sum_{\te\in\Te}h_\te Y_n(\te)\\
&\mbox{and}\,\, S_N(t)=\sum_{1\le n\le Nt}Y_n=
\sum_{1\le n\le Nt}F(X_n,X_{2n},\ldots, X_{\ell n}).
 \end{eqnarray*} 
As in \cite{KV2} we consider all primes $p_1,\ldots, p_s$  in $1,2,\ldots,
\ell$.  Denote by $\Gamma_1$ the set of numbers $\{p_1^{b_1}\cdots p_s^{b_s}\}$
 with  $ b_1,\ldots,b_s\ge 0$ and arrange them in the increasing order as 
 $1=n_1<n_2 <\cdots<n_q<\cdots$. Let ${\bbZ}_0$ be the set of all positive 
 integers that do not 
 have $p_1,p_2,\ldots, p_s$ as factors. Then any positive integer 
 $n\in {\bbZ}_+$
can be written  uniquely as a product $n=in_q$ where $i\in {\bbZ}_0$ and $n_q
\in\Gamma_1$. It is not difficult to see (for instance, by the 
 inclusion-exclusion principle) that
 the set ${\bbZ}_0$ has density $\rho=\Pi_{i=1}^s (1-\frac{1}{p_i})$
  as a subset of ${\bbZ}_+$. For $i\in {\bbZ}_0 $,  we denote by $\Gamma_i$ the 
  set    $ \{in\}$  with $n$ from $\Gamma_1$.  Then  $\Gamma_i$ are disjoint 
  and ${\bbZ}_+=\cup_{i\in {\bbZ}_0} \Gamma_i$.  If $n\in \Gamma_i $ so is $rn$ 
  for $r\le \ell$. In particular $Y_m$  depends only on the variables $\{X_n\}$
   with $n$ in the same $ \Gamma_i$ to which $m$ belongs and the collections 
   $\{Y_m:\, m\in \Gamma_i\}$ are mutually independent for different values 
   of $i$.  We need the following fact.
   
\begin{lemma}\label{lem6.1}
Given $\ell$, the set ${\bbZ}_+$ of positive integers can be divided into 
$\ell^2+1$ mutually disjoint sets $E_i$ such that
if $r$ and $s$ are two different integers in the same $E_i$  then the two sets 
$\{r,2r,\ldots,\ell r\}$ and $\{s,2r,\ldots,\ell s\}$ are disjoint. 
In particular for every $i$, $\{Y_r: r\in E_i\}$ are mutually independent.
\end{lemma}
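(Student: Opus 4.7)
The plan is to translate the statement into a graph-coloring problem and solve it greedily. Define a graph $G$ on vertex set $\bbZ_+$ by declaring $r \sim s$ iff $r \neq s$ and there exist $a, b \in \{1, \ldots, \ell\}$ with $ar = bs$, i.e., iff the two sets $\{r, 2r, \ldots, \ell r\}$ and $\{s, 2s, \ldots, \ell s\}$ meet. A partition into sets $E_i$ with the required property is then precisely a proper vertex coloring of $G$, so it suffices to exhibit such a coloring with $\ell^2 + 1$ colors.

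The main step will be a uniform bound on the degree of every vertex. Fix $r \in \bbZ_+$. Any neighbor $s$ must satisfy $s = ar/b$ for some $(a, b) \in \{1, \ldots, \ell\}^2$, and since $r \neq s$ the case $a = b$ is excluded. Thus the number of neighbors of $r$ is at most the number of ordered pairs $(a, b)$ with $1 \le a, b \le \ell$ and $a \neq b$, which equals $\ell^2 - \ell \le \ell^2$.

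With the degree bound in hand, I would apply greedy coloring using the natural order on $\bbZ_+$. Process $r = 1, 2, 3, \ldots$ in turn; at each step at most $\ell^2$ of the already-colored integers are neighbors of $r$ in $G$, so among $\ell^2 + 1$ available colors at least one is not yet used on a neighbor, and it can be assigned to $r$. Letting $E_i$ denote the $i$-th color class for $i = 1, \ldots, \ell^2 + 1$ produces the desired partition of $\bbZ_+$.

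For the final assertion, note that $Y_r = F(X_r, X_{2r}, \ldots, X_{\ell r})$ is a measurable function of the variables indexed by $\{r, 2r, \ldots, \ell r\}$. For distinct $r, s$ in the same $E_i$ these index sets are disjoint by construction, so across the whole family $\{Y_r : r \in E_i\}$ the various $Y_r$ depend on pairwise disjoint subsets of the independent collection $\{X_n\}_{n \ge 1}$ and are therefore mutually independent. There is no serious obstacle in the argument; the only point requiring any care is the verification that the case $a = b$ can be excluded in the degree count, which is what makes the bound $\ell^2$ (and hence $\ell^2 + 1$ colors) comfortably sufficient.
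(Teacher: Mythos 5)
Your proposal is correct and follows essentially the same route as the paper: define a graph on $\bbZ_+$ with an edge between $r$ and $s$ whenever the two index sets overlap, bound the degree of each vertex by counting solutions of $ar=bs$ with $a,b\in\{1,\dots,\ell\}$, and greedily color with $\ell^2+1$ colors. The only cosmetic difference is that you exclude $a=b$ to obtain the slightly sharper degree bound $\ell^2-\ell$, whereas the paper is content with $\ell^2$; both suffice for $\ell^2+1$ colors.
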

 \begin{proof}
Let us construct a graph with ${\bbZ}_+$ as vertices. There is an edge connecting $r$ and $s$, if the two sets $\{r,2r,\ldots, \ell r\}$ and 
 $\{s,2s,\ldots, \ell s\}$ have a common integer. In other words
 $is=jr$ for some $i,j$  between $1$ and $\ell$. For each fixed $r$ the
 equation $\frac{r}{s}=\frac{i}{j}$ has  at most $\ell^2$ solutions in $i$ 
 and $j$. Hence, the vertices  of the graph have 
 degree at most $\ell^2$. Therefore $\ell^2+1$ colors are  enough to make sure 
   that no two vertices connected by an edge have the same color.
 \end{proof}
 
 \begin{lemma}\label{lem6.2}
Let $T<\infty$ be fixed. For each $q>1$ and $N$, let  ${\bf A}_N^q$ be a 
subset of integers from $[1,NT]$ with density $\lim_{N\to\infty}
\frac{|{\bf A}_N^q|}{NT}= \rho_q$ such that $\rho_q\to 0$ as $q\to\infty$. 
Then for any $\te\in\Te$,
$$
\limsup_{q\to\infty}\limsup_{N\to\infty}P\big\{\sup_{0\le t\le T}\frac{1}{b_N}
|\sum_{r\in {\bf A}_N^q,\,r\le Nt} (Y_r(\te)-a_N^\te)| \ge \delta\big\}=0
$$
where 
\[
a_N^\te=E\big(\frac {b_N^2Y_1(\te)}{b^2_N+Y^2_1(\te)}\big).
\]
\end{lemma}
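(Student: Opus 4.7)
The plan is to truncate each $Y_r(\theta)$ smoothly at scale $b_N$ and, via the partition in Lemma~\ref{lem6.1}, reduce to sums of i.i.d.\ random variables. Split $\bbZ_+$ into the $\ell^2+1$ classes $E_1,\ldots,E_{\ell^2+1}$ so that within each $E_i$ the $\{Y_r(\theta)\}$ are i.i.d., and write $\mathbf{A}_N^q=\bigsqcup_i(\mathbf{A}_N^q\cap E_i)$, which reduces the estimate to the corresponding one for each class separately. Set $f(x)=b_N^2x/(b_N^2+x^2)$, so that $|f|\le b_N/2$ and $a_N^\theta=Ef(Y_1(\theta))$, and decompose
\[
Y_r(\theta)-a_N^\theta=\bigl[f(Y_r(\theta))-a_N^\theta\bigr]+g(Y_r(\theta)),\qquad g(x)=\frac{x^3}{b_N^2+x^2}.
\]

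The bounded centered piece is handled by Kolmogorov's maximal inequality applied within each class $E_i$ to the i.i.d.\ mean-zero summands $f(Y_r(\theta))-a_N^\theta$. A direct computation using the tail of $Y_1(\theta)$ furnished by Theorem~\ref{thm2.1} (which is no weaker than $x^{-\alpha_*}(\ln x)^{k_*}$ in all cases) gives $\mathrm{Var}(f(Y_1(\theta)))\le E[\min(Y_1^2,b_N^2)]\le Cb_N^2/N$, where the factor $1/N$ is exactly the one coming from $b_N^{\alpha_*}(\ln b_N)^{-k_*}\sim N/\alpha_*^{k_*}$. Hence, for each class $E_i$,
\[
P\Bigl\{\sup_{0\le t\le T}\frac{1}{b_N}\bigl|\sum_{r\in\mathbf{A}_N^q\cap E_i,\ r\le Nt}(f(Y_r)-a_N^\theta)\bigr|>\delta\Bigr\}\le\frac{C|\mathbf{A}_N^q|b_N^2/N}{\delta^2b_N^2}\le\frac{C\rho_qT}{\delta^2},
\]
which vanishes on letting first $N\to\infty$ and then $q\to\infty$. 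For the residual I would drop the supremum by $\sup_t|\sum_{r\le Nt}g(Y_r)|\le\sum_{r\in\mathbf{A}_N^q,\,r\le NT}|g(Y_r)|$ and apply Markov in $L^p$ with $p=\alpha_*/2\in(0,1)$: subadditivity of $x\mapsto x^p$ gives $E[(\sum_r|g(Y_r)|/b_N)^p]\le|\mathbf{A}_N^q|E|g(Y_1)|^p/b_N^p$, and the pointwise estimates $|g(x)|^p\le|x|^{3p}/b_N^{2p}$ for $|x|\le b_N$ together with $|g(x)|^p\le|x|^p$ for $|x|>b_N$, combined with the tail of $Y_1(\theta)$, yield $E|g(Y_1)|^p\le Cb_N^p/N$, hence the bound $C\rho_qT/\delta^p$. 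Summing over the $\ell^2+1$ classes finishes the proof.

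The main obstacle is the case $\alpha_*\le 1$, where $E|Y_1(\theta)|=\infty$ so the naive hard truncation with centering by $E[Y_1\mathbf{1}_{|Y_1|\le b_N}]$ is awkward (this truncated mean itself grows like $b_N^{1-\alpha_*}(\ln b_N)^{k_*}$, and $E|g(Y_1)|=\infty$ for $\alpha_*\le 1$). Two ingredients resolve this uniformly in $\alpha_*\in(0,2)$: the smooth cutoff $f$ keeps the centered piece pointwise bounded with variance $O(b_N^2/N)$; and the subadditive $L^p$ estimate with exponent $p<\alpha_*$ avoids any first-moment requirement on the residual. The precise tail asymptotics from Theorem~\ref{thm2.1} are essential to obtain the decisive $1/N$ factor in both the variance and the $L^p$ moment estimates, producing in the end the desired bound of order $\rho_qT$ that tends to $0$ as $q\to\infty$.
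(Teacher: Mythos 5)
Your proof is correct, but it takes a genuinely different route from the one in the paper. Both proofs open the same way, using Lemma~\ref{lem6.1} to partition $\mathbf{A}_N^q$ into $\ell^2+1$ classes on which the $Y_r(\theta)$ are i.i.d.\ and reducing via a union bound. From there the paper is less hands-on: it sets $N_q=|\mathbf{A}_N^q\cap[1,NT]|$, invokes the functional stable limit theorem for i.i.d.\ summands to conclude that $\sup_t b_{N_q}^{-1}|\sum_{r\in\mathbf{A}_N^q,\,r\le Nt}(Y_r-a_{N_q})|$ has a $q$-independent limiting distribution, and then finishes via $\lim_q\lim_N b_{N_q}/b_N=0$. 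Your proof instead performs an explicit truncation at scale $b_N$ via the smooth cutoff $f(x)=b_N^2x/(b_N^2+x^2)$ (which has the pleasant feature that the centering constant $a_N^\theta$ is exactly $Ef(Y_1)$, so the bounded piece is automatically mean zero), handles the bounded centered piece with Kolmogorov's maximal inequality and the variance bound $E[f(Y_1)^2]\le E[\min(Y_1^2,b_N^2)]\lesssim b_N^2/N$ coming from the regularly-varying tail and the definition of $b_N$, and disposes of the residual $g(x)=x^3/(b_N^2+x^2)$ by a subadditive $L^p$-Markov bound with $p=\alpha_*/2<\alpha_*$, which indeed sidesteps the lack of a first moment when $\alpha_*\le1$. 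Your computations yield the explicit rate $C\rho_qT/\delta^2$ (resp.\ $C\rho_qT/\delta^p$), so your argument is both more elementary (no appeal to the stable FCLT as a black box) and more quantitative; the paper's version is shorter but leans on heavy machinery and, as written, centers by $a_{N_q}$ rather than $a_N^\theta$, an issue your proof avoids entirely by keeping the centering fixed throughout.
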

\begin{proof}
We can use Lemma \ref{lem6.1} to split the set   ${\bf A}_N^q$ into
 $\ell^2+1$ subsets such that when $r$ runs within each subset the random
 variables $Y_r(\te)$
 are mutually independent. Since it is enough to prove the estimate for each 
 subset, we can  assume that $Y_r(\te)$ are mutually independent for $r\in 
{\bf A}_N^q$. For each $Y_r(\te)$ with $\te\in\Te_*$ we have the tail estimate,
$$
\lim_{\rho\to\infty} \rho^{\alpha*} (\ln \rho)^{-k_*}E[W(\frac{Y_r}{T})]=\int
  W(x) M(dx)
$$
where
\[
M(dx)=(c_-{\bf 1}_{x<0}+c_+{\bf 1}_{x>0})\frac{dx}{|x|^{1+\frac{\al}{\al_*}}}.
\]
Let $N_q$ be the cardinality of $A^q_N\cap[1,NT]$. If $N_q$ stays bounded
as $N\to\infty$ then the assertion of the lemma is clear. Suppose that
$N_q\to\infty$ as $N\to\infty$. Then relying on standard (functional) stable
limit theorems for i.i.d. random variables (see, for instance, \cite{AG},
\cite{Bi1}, \cite{GK} and \cite{JS}) and properties of the $J_1$ convergence
we conclude that as $N\to\infty$ the quantity  $\sup_{0\le t\le T}
\frac{1}{b_{N_q}}|\sum_{r\in {\bf A}_N^q,\,r\le Nt}(Y_r-a_{N_q})|$ will have
 a limiting distribution which does not depend on $q$. Now, the lemma follows 
 from the observation that
$$
\lim_{q\to\infty}\lim_{N\to\infty}\frac{b_{N_q}}{b_N}=0.
$$
\end{proof}

For each integer $q\geq 1$ set
\[
\Xi_N^q(\te,t)=\frac 1{b_N}\sum_{j=1}^q\, \sum_{i\in {\bbZ}_0,\, in_j\le Nt}
(Y_{in_j}(\te)-a^\te_N).
\]
We will need the following result.

\begin{lemma}\label{lem6.3} For each fixed $T<\infty$ and $\ve>0$,
\begin{equation}\label{6.1}
\lim_{q\to\infty}\limsup_{N\to\infty}P\{\sup_{0\le t\le T}|\Xi_N^q (\te,t)-
\Xi_N(\te,t)|\ge\ve\}=0.  
\end{equation}
\end{lemma}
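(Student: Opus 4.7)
The plan is to derive Lemma \ref{lem6.3} as an essentially immediate consequence of Lemma \ref{lem6.2} once we identify the correct index set over which the difference $\Xi_N - \Xi_N^q$ is summed. First, I would exploit the unique factorization $n = in_j$ with $i \in \bbZ_0$ and $n_j \in \Gamma_1$ to write $\bbZ_+$ as the disjoint union
$$\bbZ_+ = \bigcup_{j\ge 1}\bigcup_{i\in\bbZ_0}\{in_j\}.$$
Setting
$$\mathbf{A}_N^q = \{r\in[1,NT] : r = in_j \text{ for some } i\in\bbZ_0 \text{ and some } j > q\},$$
we have by construction the identity
$$\Xi_N(\te,t) - \Xi_N^q(\te,t) = \frac{1}{b_N}\sum_{r\in\mathbf{A}_N^q,\, r\le Nt}\bigl(Y_r(\te) - a_N^\te\bigr),$$
so that the supremum in (\ref{6.1}) is exactly the object controlled by Lemma \ref{lem6.2}.

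Second, I would verify the density hypothesis of Lemma \ref{lem6.2} for this choice of $\mathbf{A}_N^q$. For each fixed $j$ the set $\{in_j : i\in\bbZ_0\} = n_j\bbZ_0$ has density $\rho/n_j$ in $\bbZ_+$, where $\rho = \prod_{i=1}^s(1-1/p_i)$ is the density of $\bbZ_0$. Using the disjointness observed above, it follows that
$$\lim_{N\to\infty}\frac{|\mathbf{A}_N^q|}{NT} = \rho_q := \rho\sum_{j>q}\frac{1}{n_j}.$$
Since $\bbZ_+$ itself decomposes as $\bigcup_{j\ge 1}n_j\bbZ_0$ and has density $1$, we get $\rho\sum_{j\ge 1}1/n_j = 1$, so the tail $\rho_q$ tends to $0$ as $q\to\infty$.

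With the identification of $\mathbf{A}_N^q$ and the density bound $\rho_q\to 0$ in hand, Lemma \ref{lem6.2} applies verbatim and yields the claim (\ref{6.1}). There is no real obstacle here: the independence structure (supplied by Lemma \ref{lem6.1}) and the stable-CLT argument that made the tail of a low-density sum asymptotically negligible (in particular the computation $b_{N_q}/b_N \to 0$ along the density $\rho_q$) have already been absorbed into the statement of Lemma \ref{lem6.2}. The only minor bookkeeping point is the density identity $\sum_{j\ge 1}1/n_j = 1/\rho$, which is immediate from the partition $\bbZ_+ = \bigcup_i \Gamma_i$ used throughout Section \ref{sec6}.
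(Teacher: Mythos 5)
Your proof is correct and follows essentially the same route as the paper: rewrite $\Xi_N-\Xi_N^q$ as a sum over the index set $\mathbf{A}_N^q$ of integers $r\le NT$ whose $\Gamma_1$-part $n_j$ has index $j>q$, bound the density of that set, and invoke Lemma \ref{lem6.2}. The only cosmetic difference is that you compute the density of $\mathbf{A}_N^q$ exactly as $\rho\sum_{j>q}1/n_j$ (using disjointness of the cosets $n_j\bbZ_0$), whereas the paper is content with the cruder upper bound $\sum_{j>q}1/n_j$; both tend to $0$ because $\sum_j 1/n_j=\prod_i(1-1/p_i)^{-1}<\infty$, so the conclusion is identical.
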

\begin{proof}
Since 
$$
\Xi_N(\te,t)=\frac{1}{b_N}\sum_{s=1}^\infty\, \sum_{ i\in{\bbZ}_0,\, in_s\le Nt}
 (Y_{in_s}(\te)-a_N^\te)
$$
we can write
$$
\Xi_N(\te,t)-\Xi_N^q(\te,t)=\frac{1}{b_N}\sum_{s=q+1}^\infty 
\sum_{ i\in{\bbZ}_0,\, in_s \le Nt} (Y_{in_s}(\te)-a_N^\te).
$$
Hence,
\begin{align}
\sup_{0\le t\le T}|\Xi_N(\te,t)-\Xi_N^q(\te,t)|&\le \sup_{1\le n\le NT} 
|\sum_{s=q+1}^\infty\, \sum_{ i\in {\bbZ}_0,\, in_s \le n} (Y_{in_s}(\te)-
a_N^\te)|\notag\\
&=\sup_{1\le n\le NT}\frac{1}{b_N}|\sum_{r\in {\bf A}_N^q,\, r\le n} 
(Y_r(\te)-a_N^\te)|\label{6.2}
\end{align}
where ${\bf A}_N^q$ is the set of integers in $[1, NT]$ that are divisible by 
some $n_{q'}$ with $q'>q$. The set  of such integers in the interval  $[1, NT]$
 will have a proportion at most $\sum_{q'>q}\frac{1}{n_{q'}}=\epsilon_q\to 0$
  as $q\to\infty$ since
$$
\sum_q \frac{1}{n_q}=\sum_{b_1,\ldots, b_s\ge 0} \frac{1}{p_1^{b_1}\cdots 
p_s^{b_s}}= \prod_{j=1}^s (1-\frac{1}{p_j})^{-1}=\frac{1}{\rho}<\infty.
$$
Now, the result follows from Lemma \ref{lem6.2}.
\end{proof}

We will study first the limiting behavior of $\Xi^q_N$ as $N\to\infty$ in
the $J_1$ topology and then, relying on Lemma \ref{lem6.3}, will let 
$q\to\infty$ and obtain weak limits of distributions of $\Xi_N$ required in
 Theorem \ref{thm2.4}. 
Set $Z_i^{ j ,\theta}=Y_{in_j}(\te)$ so that $Y_{in_j}=\sum_\theta Z_i^{ j ,
\theta}$.
Recalling the notation $\Theta_*=\{\theta: \al(\theta)=\al_*, k(\theta)=k_*\}$
 from Section \ref{sec2} we define an equivalence relation in $\Gamma_1\times
  \Theta_*$ by declaring $(n_{j_1},\theta_1)\sim(n_{j_2},\theta_2)$ if 
  $n_{j_1}J(\theta_1)$ and $n_{j_2} J(\theta_2)$ are identical as subsets of 
  ${\bbZ}$ (viewing these as products of a scalar and a vector). 
  The equivalence classes will be 
   denoted by $\tau$ and together they form a set $\mathcal T$,  a quotient of
    $\Gamma_1\times\Theta_*$. The joint distribution  of the collection  
    $\{Z_i^{ j ,\theta}\}$ as $j$ and $\theta$ vary does not depend on $i$. 
    Let $\cD_q$ be the (finite) set $\{ (n_j,\te):\, j\leq q,\,\te\in\Te\}$
    whose cardinality we denote by $m_q$. The vector space $R^{m_q}$ can be  
    naturally decomposed as
$$ 
R^{m_q}=\oplus_{\tau\in \mathcal T} V_\tau\oplus U
$$
corresponding to the span of coordinates from the equivalence classes $\tau
\in \mathcal T$ and  the span  $U$ of the remaining coordinates from 
$\mathcal D_q$.
\begin{lemma}\label{lem6.4} Let $\bfZ_q$ be the random $m_q$-dimensional
vector $\{ Z_1^{j,\te},\,j\leq q,\,\te\in\Te\}$. Then the limit
\begin{equation}\label{6.3}
\lim_{\rho\to\infty}\rho^{-\alpha_*}(\ln\rho)^{k_*} EW\big(\frac {\bfZ_q}
{\rho}\big)=\int_{S^{m_q-1}}\int_0^\infty W(su)\nu(ds)\frac{du}{u^{1+\alpha*}}
\end{equation}
 exists for any bounded continuous function $W$ satisfying (\ref{2.1+})
 and for some measure $\nu$ on the $(m_q-1)$-dimensional sphere $S^{m_q-1}$. 
 Moreover, $\nu$ is concentrated on 
 $\cup_{\tau\in \mathcal T}\cdots\{0\}\oplus\{0\}\oplus V_{\tau}\oplus\{0\}
 \oplus \cdots$.  
 \end{lemma}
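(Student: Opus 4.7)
My plan is to reduce (\ref{6.3}) to an application of Lemma \ref{lem4.1}, following the strategy already used to prove Theorem \ref{thm2.1}(i). I will organize the coordinates of $\bfZ_q$ into blocks as follows: for each equivalence class $\tau\in\mathcal T$ I take the sub-vector $\bfZ_\tau=\{Z_1^{j,\theta}:(n_j,\theta)\in\tau\}$, and I collect all coordinates with $\theta\in\Theta\setminus\Theta_*$ into a single residual block $\bfZ_0$. This is exactly the decomposition $R^{m_q}=\oplus_\tau V_\tau\oplus U$ of the statement, so it remains to verify conditions (\ref{4.1})--(\ref{4.3}) for this block decomposition and then invoke Lemma \ref{lem4.1}.

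Condition (\ref{4.2}) for $\bfZ_0$ will follow from Corollary \ref{cor3.3} and a union bound, since every coordinate with $\theta\in\Theta\setminus\Theta_*$ has a one-dimensional tail decaying strictly faster than $\rho^{-\alpha_*}(\ln\rho)^{k_*}$. For (\ref{4.1}) on a fixed block $\bfZ_\tau$, I fix any representative $(n_{j_0},\theta_0)\in\tau$ and set $S:=n_{j_0}J(\theta_0)$. Every coordinate of the block factors uniquely as $Z_1^{j,\theta}=V_\tau\cdot R^{(j)}_\theta$, where the common ``big'' factor $V_\tau:=\prod_{r\in S}X_r^{\sigma_*}$ depends only on $\{X_r:r\in S\}$ and the residual monomial $R^{(j)}_\theta$ depends only on $\{X_{l n_j}:l\notin J(\theta)\}$. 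Since these residual variables are disjoint from $S$, the scalar $V_\tau$ is independent of the entire vector $\mathbf R_\tau:=(R^{(j)}_\theta)_{(n_j,\theta)\in\tau}$. Iterating Proposition \ref{prop3.1}(ii) on the $p_*$ independent factors $X_r^{\sigma_*}$ that compose $V_\tau$ shows that $V_\tau$ obeys a one-dimensional tail law with exponent $\alpha_*$ and log-power $(k+1)p_*-1=k_*$, while Corollary \ref{cor3.3} yields $E|\mathbf R_\tau|^{\alpha_*}<\infty$ because every factor in every $R^{(j)}_\theta$ has tail exponent strictly greater than $\alpha_*$. Lemma \ref{lem4.2} applied to the product $V_\tau\cdot\mathbf R_\tau$ then delivers (\ref{4.1}) for $\bfZ_\tau$ with a limiting measure concentrated on $V_\tau$.

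Condition (\ref{4.3}) I will handle by Cauchy--Schwarz, which sidesteps the tripartite independence bookkeeping of Lemma \ref{lem4.3}: if $Z_1^{j_1,\theta_1}$ and $Z_1^{j_2,\theta_2}$ lie in different $\tau$-blocks then
$$
P\{|Z_1^{j_1,\theta_1}|>\rho,\,|Z_1^{j_2,\theta_2}|>\rho\}\leq P\{|Z_1^{j_1,\theta_1}Z_1^{j_2,\theta_2}|>\rho^2\},
$$
and the product is itself a single monomial $\prod_r X_r^{e_1(r)+e_2(r)}$ in independent $X$-variables, to which Corollary \ref{cor3.3} applies: its tail exponent is $\alpha^{**}=\alpha/\max_r(e_1(r)+e_2(r))$ with log-power $(k+1)\cdot\#\{r:e_1(r)+e_2(r)=\max\}-1$. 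A short case analysis against the dominant sets $S_i:=n_{j_i}J(\theta_i)$ gives two alternatives: either $\max(e_1+e_2)<2\sigma_*$, in which case $\alpha^{**}>\alpha_*/2$ and $P\{|\cdot|>\rho^2\}$ decays strictly faster than $\rho^{-\alpha_*}$; or $\max(e_1+e_2)=2\sigma_*$, the max being attained exactly at the indices $r\in S_1\cap S_2$, in which case the multiplicity $|S_1\cap S_2|$ is strictly less than $p_*$ because $S_1\neq S_2$ have the same cardinality $p_*$, so the product tail is $O(\rho^{-\alpha_*}(\ln\rho)^{(k+1)|S_1\cap S_2|-1})=o(\rho^{-\alpha_*}(\ln\rho)^{k_*})$. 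In both cases (\ref{4.3}) follows. Feeding (\ref{4.1})--(\ref{4.3}) into Lemma \ref{lem4.1} then yields both (\ref{6.3}) and the concentration of $\nu$ on $\cup_\tau V_\tau$; the main technical point in the whole argument is precisely this case analysis underlying (\ref{4.3}), everything else being a routine reassembly of the results of Sections \ref{sec3} and \ref{sec4}.
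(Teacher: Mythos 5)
Your proof follows the same overall strategy as the paper: decompose $\bfZ_q$ into the blocks $\bfZ_\tau$, $\tau\in\mathcal T$, plus a residual block $\bfZ_0$, verify hypotheses (\ref{4.1})--(\ref{4.3}), and invoke Lemma \ref{lem4.1}. The handling of (\ref{4.1}) via the factorization $Z_1^{j,\theta}=V_\tau\cdot R^{(j)}_\theta$ and Lemma \ref{lem4.2} is exactly what the paper does (using (\ref{5.4})). The one place where you genuinely diverge is the verification of (\ref{4.3}). The paper applies Lemma \ref{lem4.3} with $V_2=\prod_{r\in S_1\cap S_2}X_r^{\sigma_*}$, which implicitly requires the complementary factors $V_1=Y/V_2$ and $V_3=Z/V_2$ to be independent; in general they are not, since $Y$ and $Z$ may share $X$-variables with exponents below $\sigma_*$. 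Your route -- writing the joint event inside $P\{|Z_1^{j_1,\theta_1}Z_1^{j_2,\theta_2}|>\rho^2\}$, viewing the product as a single monomial $\prod_r X_r^{e_1(r)+e_2(r)}$ in genuinely independent variables, and then applying Corollary \ref{cor3.3} together with the dichotomy $\max(e_1+e_2)<2\sigma_*$ versus $\max(e_1+e_2)=2\sigma_*$ on $S_1\cap S_2$ with $|S_1\cap S_2|<p_*$ -- circumvents that independence issue entirely and gives (\ref{4.3}) directly as a limit rather than a $\limsup$ bound. So while the global architecture is the same as the paper's, your treatment of (\ref{4.3}) is more self-contained and actually closes a small gap in the paper's appeal to Lemma \ref{lem4.3}.
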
 
 \begin{proof}
 In view of (\ref{5.4}) we can write for any $(n_{j_1},\te_1)$ and
 $(n_{j_2},\te_2)$ with $\te_1,\te_2\in\Te_*$,
 \begin{equation}\label{6.4}
 g_{\te_i}(X_{n_{j_i}},X_{2n_{j_i}},...,X_{\ell n_{j_i}})=\big(\prod_{u=1}^{p_*}
 X^{\sig_*}_{l_u^{(i)}}\big)f_{\te_i}(X_{l^{(i)}_{p_*+1}},...,X_{l^{(i)}_\ell}),
 \, i=1,2
 \end{equation}
 where $n_{j_i}J(\te_i)=(l_1^{(i)},...,l_{p_*}^{(i)})$ while $f_{\te_i}$ is
 a monomial containing $X_{l_v}$'s with indexes $l_v$ in $n_{j_i},2n_{j_i},
 ...,\ell n_{j_i}$ which are different from $l^{(i)}_1,...,l^{(i)}_{p_*}$ 
 and $X_{l_v}$'s
 are rised in $f_{\te_i}$ to powers smaller than $\sig_*$. If $(n_{j_1},
 \te_1)\sim(n_{j_2},\te_2)$ then $(l_1^{(1)},...,l_{p_*}^{(1)})=
 (l_1^{(2)},...,l_{p_*}^{(2)})$, and so for $(n_j,\te)$ within one 
 equivalence class we are in circumstances of Lemma \ref{lem4.2}. If
 $(n_{j_1},\te_1)$ and $(n_{j_2},\te_2)$ are in different equivalence classes
 then $(l_1^{(1)},...,l_{p_*}^{(1)})\ne (l_1^{(2)},...,l_{p_*}^{(2)})$ and
 we can apply Lemma \ref{lem4.3} with $Y=g_{\te_1}(X_{n_{j_1}},X_{2n_{j_1}},
 ...,X_{\ell n_{j_1}})$, $Z=g_{\te_2}(X_{n_{j_2}},X_{2n_{j_2}},...,
 X_{\ell n_{j_2}})$ and $V_2$ consisting of the common part of the products
 $\prod_{u=1}^{p_*}X^{\sig_*}_{l_u^{(1)}}$ and $\prod_{u=1}^{p_*}
 X^{\sig_*}_{l_u^{(2)}}$ while setting $V_2=1$ if this common part is empty.
 Thus, we arrive at the circumstances of Lemma \ref{lem4.1} and the current
 lemma follows from there.
 \end{proof}
    
 Now, set
 \[
 \hat\Xi^{j,\te}_N(t)=\frac 1{b_N}\sum_{i\in\bbZ_0,\, i\leq Nt}(Z^{j,\te}_i
 -a^{\te}).   
 \]
 It follows from Lemma \ref{lem6.4} and standard stable limit theorems for
 sums of i.i.d. (regularly varying) random vectors (see, for instance, 
 \cite{Rv} and Section 7.2 in \cite{Re}) that for each $q<\infty$ as 
 $N\to\infty$ the vector process
 \[
 \hat\Xi^q_N(t)=\{\hat\Xi_N^{j,\te}(t),\, j\leq q,\,\te\in\Te_*\},\, t\in[0,T]
 \]
 weakly converges in the $J_1$-topology to an $\al_*$-stable vector L\' evy
 process
 \[
 \hat\Xi^q(t)=\{\hat\Xi^{j,\te}(t),\, j\leq q,\,\te\in\Te_*\},\, t\in[0,T]
 \]
 while for $\te\in\Te\setminus\Te_*$ the components $\hat\Xi_N^{j,\te}$
 converge to $\hat\Xi^{j,\te}\equiv 0$. Moreover, we obtain also from the
 last assertion of Lemma \ref{lem6.4} that the vector processes
  $\{\hat\Xi^{j,\te},\, (n_j,\te)\in\tau\}$ parametrized by $\tau\in\cT$ are
  mutually independent.
  
  Define the vector processes
  \[
  \tilde\Xi^q_N(t)=\{\hat\Xi^{j,\te}_N(\frac t{n_j}),\, j\leq q,\,
  \te\in\Te_*\},\, t\in[0,T].
  \]
  Since the processes $\hat\Xi_N^{j,\te}$ weakly converge as $N\to\infty$
  in $J_1$-topology
  to the corresponding processes $\hat\Xi^{j,\te}$ then the processes
  $\tilde\Xi^{j,\te}_N( t)=\hat\Xi^{j,\te}_N(\frac t{n_j})$ weakly converge
  as $N\to\infty$
  to $\tilde\Xi^{j,\te}( t)=\hat\Xi^{j,\te}(\frac t{n_j})$. Moreover, observe
  that for a fixed $j$ all processes $\tilde\Xi_N^{j,\te},\,\te\in\Te_*$ are 
  obtained from the corresponding processes  $\hat\Xi^{j,\te}_N,\,\te\in\Te_*$ 
  by the same linear time change, and so we can use for them the same change 
  of time functions appearing in the definition of the $J_1$-convergence. It
  follows that for each $j$ the whole vector process $\tilde\Xi_N^q(j,t)=
  \{\tilde\Xi_N^{j,\te}(t),\, \te\in\Te_*\},\, t\in[0,T]$ converges weakly in
  the $J_1$-topology to an $\al_*$-stable L\' evy vector process 
  $\tilde\Xi^q(j,t)=\{\tilde\Xi^{j,\te}(t),\, \te\in\Te_*\},\, t\in[0,T]$.

  Next, relying on Corollary \ref{cor7.1} in Appendix we will prove that as 
  $N\to\infty$ the full vector process $\tilde\Xi^q_N(t),\, t\in[0,T]$ weakly 
  converges in the $J_1$-topology to the vector process
  \[
  \tilde\Xi^q(t)=\{\tilde\Xi^{j,\te}(t),\, j\leq q,\,\te\in\Te_*\},
  \, t\in[0,T].
 \]
 In order to do this we have to show that with probability one the 
 vector components $\tilde\Xi^q(j,t),\, t\in[0,T]$, $j\leq q$ 
 have no (pairwise) simultaneous jumps. Namely, consider $\tilde\Xi^q(i,t)$
 and $\tilde\Xi^q(j,t)$ for $j>i$. Set $c=\frac {n_j}{n_i}$. For each
 integer $k\geq 0$ and $t\in[n_jc^k,n_jc^{k+1})$ define new vector processes
 \[
 \Psi_k^q(i,t)=\tilde\Xi^q(i,t)-\tilde\Xi^q(i,c^{k+1})\,\,\mbox{and}\,\,
 \Psi_k^q(j,t)=\tilde\Xi^q(j,t)-\tilde\Xi^q(j,c^k).
 \]
 Since the limiting vector L\' evy process $\hat\Xi^q$ has independent 
 increments we obtain that the vector processes $\Psi_k^q(i,t)$ and
 $\Psi_k^q(j,t)$ are independent L\' evy processes when $t\in[n_jc^k,
 n_jc^{k+1}]$, and so almost surely they cannot have simultaneous jumps.
  Hence, with probability one the processes 
 $\tilde\Xi^q(i,t)$ and $\tilde\Xi^q(j,t)$ have no simultaneous jumps
 when $t$ runs in $[0,T]$. 
 
 Now, from the $J_1$-convergence of the full vector process $\tilde\Xi^q_N$ 
 to $\tilde\Xi^q$ we obtain also by Corollary \ref{cor7.1} that the vector
  process $\Xi^q_N=\sum_{j=1}^q\tilde\Xi_N^q(j,\cdot)=\{\Xi^q_N(\te,\cdot),
  \,\te\in\Te_*\}$ weakly converges in the $J_1$ topology to the vector 
  process $\Xi^q=\sum_{j=1}^q\tilde\Xi^q(j,\cdot)=\{\Xi^q(\te,\cdot),
  \,\te\in\Te_*\}$ where $\Xi^q(\te,\cdot)=\sum_{j=1}^q\tilde\Xi^{j,\te}$.
  It follows from the convergence of vector processes that as $N\to\infty$ 
  the sum process $\xi^q_N(t)=\sum_{\te\in\Te}h_\te\Xi_N^q(\te,t),\, t\in[0,T]$
  also converges weakly in the $J_1$ topology to $\xi^q(t)=\sum_{\te\in\Te}
  h_\te\Xi^q(\te,t),\, t\in[0,T]$.
  
   Next, we can write
  \begin{eqnarray}\label{6.5}
 & \Xi^q(\te,t)=\sum_{j=1}^q\hat\Xi^{j,\te}(\frac t{n_j})\\
 &=\sum_{j=1}^q\hat\Xi^{j,\te}(\frac t{n_q})+\sum_{j=1}^{q-1}\sum_{i=1}^j
 (\hat\Xi^{i,\te}(\frac t{n_j})-\Xi^{i,\te}(\frac t{n_{j+1}})).
 \nonumber\end{eqnarray}
 Since for each fixed $\te$ the pairs $(i,\te)$ belong for different $i$'s 
 to different equivalence classes and taking into account that each limiting
 L\' evy process $\hat\Xi^{j,\te}$ has independent increments we conclude
 that the summands in the right hand side of (\ref{6.5}) are independent.
 Hence, the process $\Xi^q(\te,t),\,\te\in\Te_*$ is an $\al_*$-stable L\' evy 
 process. If all equivalence classes are singletons then the whole limiting 
 vector process $\hat\Xi^q$ has independent increments and this remains true 
 for $\Xi^q$, as well, in view of its construction by taking sums of certain
  components of $\hat\Xi^q$.
  
  It remains to let $q\to\infty$ and to verify properties of corresponding
  limiting processes. Denote by $\cL^{q,\te}_N,\,\cL^{q,\te},\,\cL^{q}_N,\,
  \cL^{q},\,\hat\cL^{\te}_N$ and $\cL_N$ the distributions of processes
  $\Xi^q_N(\te,\cdot),\,\Xi^q(\te,\cdot),\,\Xi^q_N,\,\Xi^q,\,
  \Xi_N(\te,\cdot)$ and $\{\Xi_N(\te,\cdot),\,\te\in\Te_*\}$, respectively,
  on the time interval $[0,T]$. Denote by $d$ the Prokhorov distance (see,
  for instance, \cite{Bi1} or \cite{JS}) on the corresponding space of
  distributions and observe that convergence in this metric is equivalent
  here to the weak convergence with respect to the $J_1$ topology. The above
  proof yields that for each $\te\in\Te$ and $q\geq 1$,
  \begin{equation}\label{6.6}
  \lim_{N\to\infty}d(\cL_N^{q,\te},\cL^{q,\te})=\lim_{N\to\infty}d(\cL^q_N,
  \cL^q)=0.
  \end{equation}
  It is also clear from the corresponding definitions that for each $\te\in\Te$
  and $N\geq 1$,
   \begin{equation}\label{6.7}
  \lim_{q\to\infty}d(\cL_N^{q,\te},\hat\cL^{\te}_N)=\lim_{q\to\infty}
  d(\cL^q_N,\cL_N)=0.
  \end{equation}
 In addition, it follows from Lemma \ref{lem6.3} that for each $\te\in\Te$,
 \begin{equation}\label{6.8}
  \lim_{q\to\infty}\limsup_{N\to\infty}d(\cL_N^{q,\te},\hat\cL_N^{\te})=
  \lim_{q\to\infty}\limsup_{N\to\infty}d(\cL^q_N,\cL_N)=0.
  \end{equation}
  By the triangle inequality for any $q,q'\geq 1$,
  \[
  d(\cL^{q,\te},\cL^{q',\te})\leq d(\cL^{q,\te},\cL^{q,\te}_N)+d(\cL^{q,\te}_N,
  \hat\cL_N^{\te})+d(\cL^{q',\te}_N,\hat\cL^{\te}_N)+d(\cL^{q',\te},
  \cL^{q',\te}_N)
  \]
  and 
  \[
  d(\cL^{q},\cL^{q'})\leq d(\cL^{q},\cL^{q}_N)+d(\cL^{q}_N,
  \cL_N)+d(\cL^{q'}_N,\cL_N)+d(\cL^{q'},\cL^{q'}_N).
  \]
  These together with (\ref{6.6})--(\ref{6.8}) yields that
 \begin{equation}\label{6.9}
  \lim_{q,q'\to\infty}d(\cL^{q,\te},\cL^{q',\te})=\lim_{q,q'\to\infty}
  d(\cL^q,\cL^{q'})=0.
  \end{equation}
  Hence, $\{\cL^{q,\te},\, q\geq 1\}$ and $\{\cL^q,\, q\geq 1\}$ are Cauchy
  sequences in the corresponding complete metric spaces, and so there exist
  distributions $\hat\cL^\te$ and $\cL$ on the corresponding spaces such that
  \begin{equation}\label{6.10}
  \lim_{q\to\infty}d(\cL^{q,\te},\hat\cL^{\te})=\lim_{q\to\infty}d(\cL^q,
  \cL)=0.
  \end{equation}
 It follows also from (\ref{6.6})--(\ref{6.8}) that
 \begin{equation}\label{6.11}
  \lim_{N\to\infty}d(\hat\cL_N^{\te},\hat\cL^{\te})=\lim_{N\to\infty}d(\cL_N,
  \cL)=0.
  \end{equation}
  
  Let also $\tilde\cL^q_N,\,\tilde\cL^q$ and $\tilde\cL_N$ be distributions of
  the sum processes $\xi^q_N(\cdot)=\sum_{\te\in\Te}h_\te\Xi^q_N(\te,\cdot)$,
   $\xi^q(\cdot)=\sum_{\te\in\Te}h_\te\Xi^q(\te,\cdot)$ and 
   $\xi_N(\cdot)=\sum_{\te\in\Te}h_\te\Xi_N(\te,\cdot)$, respectively. Then,
   in the same way as above we obtain that there exists a distribution
   $\tilde\cL$ such that
   \begin{equation}\label{6.12}
  \lim_{q\to\infty}d(\tilde\cL^q,\tilde\cL)=\lim_{N\to\infty}d(\tilde\cL_N,
  \tilde\cL)=0. 
  \end{equation}
  
  Let $\Xi(\te,\cdot),\,\{\Xi(\te,\cdot),\,\te\in\Te_*\}$ and $\xi$ be the 
  processes on the time interval $[0,T]$ having the distributions $\hat\cL^\te,
  \,\cL$ and $\tilde\cL$, respectively. Thus, we established the following 
  weak convergencies in the $J_1$ topology as $N\to\infty$,
  \[
  \Xi_N(\te,\cdot)\Rightarrow\Xi(\te,\cdot),\,\{\Xi_N(\te,\cdot),\,\te\in\Te_*\}
  \Rightarrow\{\Xi(\te,\cdot),\,\te\in\Te_*\}\,\,\mbox{and}\,\,\xi_N
  \Rightarrow\xi.
  \]
  Observe that from convergence of the vector process above it follows that
  $\xi(\cdot)=\sum_{\te\in\Te}h_\te\Xi(\te,\cdot)$. 
  
  Since for each $q$ the processes $\Xi^q(\te,\cdot)$, $\{\Xi^q(\te,\cdot),
  \,\te\in\Te_*\}$ and $\xi^q$ have $\al^*$-stable distributions we derive from
  convergence as $q\to\infty$ of characteristic functions of marginal and 
  finite dimensional distributions of these processes that the limiting 
  processes $\Xi(\te,\cdot)$, $\{\Xi(\te,\cdot),\,\te\in\Te_*\}$ and $\xi$ 
  must have $\al^*$-stable distributions, as well. Under the conditions of 
  Theorem \ref{thm2.4}(ii) for each $q$ the processes $\Xi^q(\te,\cdot)$, 
  $\{\Xi^q(\te,\cdot),\,\te\in\Te_*\}$ and $\xi^q$ have independent increments
   and, again, in view of convergence of characteristic functions we see that
   the limiting as $q\to\infty$ processes $\Xi(\te,\cdot)$, $\{\Xi(\te,\cdot),\,
   \te\in\Te_*\}$ and $\xi$ have independent increments, as well, i.e. they are
   L\' evy processes, completing the proof of Theorem \ref{thm2.4}.   \qed

\section{Appendix} \label{secA}
\setcounter{equation}{0}

We will start here with a basic property of the $J_1$-convergence.
Let $X$ and $Y$ be two Polish spaces and $D([0,1];X)$ and $D([0,1];Y)$ are 
spaces of $X$ valued and $Y$ valued functions 
that are right continuous, have left limits at every $t\in [0,1]$ and are,
in addition, left continuous at $1$.
In general, if $x_n(\cdot)\in D([0,1];X)$ and $y_n(\cdot)\in D([0,1];Y)$
converge in $J_1$ topology respectively to $x(t)$ and $y(t)$ then
it does not follow that  $z_n(\cdot)=(x_n(\cdot),y_n(\cdot))$ converges to 
$z(\cdot)=(x(\cdot),y(\cdot))\in D([0,1];Z)$ where $Z=X\times Y$. It is 
possible that $x_n(\cdot)$ and $y_n(\cdot)$ have jumps at two distinct points 
 $t_n, t^\prime_n$  that tend to a common point $t$. As functions with values
 in $Z=X\times Y$, $z_n(t)$ has two jumps that come together and this rules 
 out convergence in $D([0,1];Z)$. However we have the following 
 
\begin{lemma}\label{lem7.0}
Let $x_n(\cdot)$ and $y_n(\cdot)$ converge in $J_1$ topology to $x(\cdot)$ and
 $y(\cdot)$ respectively in $D([0,1];X)$ and $D([0,1];Y)$ . Let $x(\cdot)$ and
  $y(\cdot)$ do not have a jump at the same $t$, i.e the sets of discontinuity 
  points of $x(\cdot)$ and $y(\cdot)$ are disjoint. Then $z_n(\cdot)=
  (x_n(\cdot),y_n(\cdot))$ converges in $J_1$ topology to $z(\cdot)=(x(\cdot),
  y(\cdot))$ in $D([0,1]; Z)$ where $Z=X\times Y$. In particular if $f:Z\to S$
   is a continuous map then $f(x_n(\cdot),y_n(\cdot))$ converges in the  $J_1$
    topology to $f((x(\cdot),y(\cdot))\in D([0,1];S)$ . 
\end{lemma}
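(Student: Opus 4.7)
The plan is to work directly with the Skorohod characterization: $J_1$-convergence $x_n\to x$ in $D([0,1];X)$ is equivalent to the existence of homeomorphisms $\lambda_n^X:[0,1]\to[0,1]$ with $\|\lambda_n^X-\mathrm{id}\|_\infty\to 0$ and $\sup_t d_X(x_n(\lambda_n^X(t)),x(t))\to 0$, and analogously for $y_n\to y$ with $\lambda_n^Y$. The difficulty is that $\lambda_n^X$ and $\lambda_n^Y$ are, in general, different, so one cannot directly use them to time-change the pair $(x_n,y_n)$. The hypothesis that the jump sets $J_x$ and $J_y$ are disjoint is what makes the obstruction surmountable: large jumps of $x_n$ (which $\lambda_n^X$ must align) and large jumps of $y_n$ (which $\lambda_n^Y$ must align) are asymptotically concentrated near disjoint finite sets, so a single homeomorphism $\lambda_n$ can be patched together from $\lambda_n^X$ and $\lambda_n^Y$ on disjoint neighborhoods.

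First, I would fix $\epsilon>0$ and let $S_\epsilon\subset J_x$, $T_\epsilon\subset J_y$ be the (finite) sets of jumps of $x,y$ of size at least $\epsilon$. By hypothesis $S_\epsilon$ and $T_\epsilon$ are disjoint, so one can choose $\eta>0$ so small that the $3\eta$-neighborhoods $I_i^x$ of points of $S_\epsilon$ and $I_j^y$ of points of $T_\epsilon$ are pairwise disjoint and contained in $(0,1)$, and furthermore so that the oscillation of $x$ on any sub-interval of $[0,1]\setminus\bigcup_i I_i^x$ of length at most $3\eta$ is bounded by $\epsilon$, and symmetrically for $y$ (this is possible because discontinuities of $x$ outside $S_\epsilon$ have size $<\epsilon$, and similarly for $y$).

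Next, for $n$ large enough that $\|\lambda_n^X-\mathrm{id}\|_\infty,\|\lambda_n^Y-\mathrm{id}\|_\infty<\eta$, I would define $\lambda_n$ by setting it equal to $\lambda_n^X$ on each $I_i^x$, equal to $\lambda_n^Y$ on each $I_j^y$, and extending on the complementary intervals by the piecewise linear interpolation between the values already prescribed at the endpoints. This produces a strictly increasing homeomorphism $[0,1]\to[0,1]$ with $\|\lambda_n-\mathrm{id}\|_\infty<\eta$. To estimate $d_X(x_n(\lambda_n(t)),x(t))$, I split into three cases: (a) if $t\in I_i^x$, then $\lambda_n(t)=\lambda_n^X(t)$, so the estimate is immediate; (b) if $t\in I_j^y$, then $\lambda_n(t)=\lambda_n^Y(t)$, and although this is not the time change tailored to $x_n$, the point $t$ lies outside $\bigcup_i I_i^x$, and since $x_n\circ\lambda_n^X$ is uniformly close to $x$, the function $x_n$ has no jumps of size $\geq 2\epsilon$ in a $2\eta$-neighborhood of $\lambda_n^X(t)$ for $n$ large; hence $x_n(\lambda_n(t))$ and $x_n(\lambda_n^X(t))$ differ by at most $O(\epsilon)$, and we conclude $d_X(x_n(\lambda_n(t)),x(t))=O(\epsilon)$; (c) if $t$ lies in none of the chosen intervals, the same argument as (b) applies. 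By symmetry the analogous bound holds for $y$, so $\sup_t d_Z(z_n(\lambda_n(t)),z(t))=O(\epsilon)$.

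Letting $\epsilon\to 0$ through a diagonal extraction produces a single sequence of homeomorphisms establishing $z_n\to z$ in $J_1$ on $D([0,1];Z)$. The last assertion follows because continuity of $f:Z\to S$ and uniform convergence of $z_n\circ\lambda_n$ to $z$ immediately yield uniform convergence of $f\circ z_n\circ\lambda_n$ to $f\circ z$, so the same time changes $\lambda_n$ witness the $J_1$-convergence $f\circ z_n\to f\circ z$. The main obstacle is case (b): one must upgrade the uniform proximity of $x_n\circ\lambda_n^X$ to $x$ into a statement about the local oscillation of $x_n$ itself, which is exactly where the disjoint-jump-sets hypothesis and the choice of $\eta$ are used.
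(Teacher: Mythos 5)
Your proof is essentially correct, but it takes a genuinely different route from the paper's. The paper argues via the $D[0,1]$ modulus of continuity
\[
\omega_h(z(\cdot))=\sup_{|t_1-t_2|\le h}\inf_{\tau\in(t_1,t_2)}\max\bigl[\Delta_{(t_1,\tau)}(z),\Delta_{(\tau,t_2)}(z)\bigr],
\]
never constructing explicit time changes: it first checks uniform compact containment, then bounds $\omega_h(z_n)$ by a sum of one-component quantities, and finally exploits disjointness of the limit jump sets to guarantee that, for $n$ large and $h$ small, any interval of length $h$ contains at most one large jump of $(x_n,y_n)$, so the splitting point $\tau$ that works for the jumping component also works for the non-jumping one (whose interval oscillation already differs from its $\inf_\tau$ modulus by only $O(\epsilon)$). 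Your approach instead works directly with the Skorokhod time-change characterization, building a single homeomorphism $\lambda_n$ by patching $\lambda_n^X$ near the $\epsilon$-jumps of $x$, $\lambda_n^Y$ near those of $y$, and interpolating in the gaps. The trade-offs: the paper's argument is shorter because it invokes the known equivalence of $J_1$-convergence with the $\omega_h$-criterion and avoids the bookkeeping of building a global $\lambda_n$; yours is more constructive and makes visible exactly what the common time change looks like, which some readers may find more transparent.

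One remark on your constants: the "$3\eta$-neighborhood'' choice is slightly too tight. For the piecewise-defined $\lambda_n$ to be a strictly increasing homeomorphism you need the value assigned at the right endpoint of each designated interval to be strictly less than the value assigned at the left endpoint of the next; with $\|\lambda_n^X-\mathrm{id}\|,\|\lambda_n^Y-\mathrm{id}\|<\eta$ this forces adjacent designated intervals to be separated by more than $2\eta$, which "pairwise disjoint $3\eta$-neighborhoods'' alone does not guarantee. Also, your oscillation control for $x$ should be stated as: $x$ has oscillation $\le\epsilon$ on every interval of length $\le 3\eta$ that contains no jump of $x$ of size $\ge\epsilon$ (rather than restricting to sub-intervals of $[0,1]\setminus\bigcup I_i^x$), since in case (b) the interval $[t\wedge t',t\vee t']$ you compare over may graze a designated interval even though it misses the jump point itself. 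Both are easily repaired by shrinking $\eta$ (or enlarging the separation), so they are presentational gaps rather than logical ones; the core idea--that disjointness of the limit jump sets lets you localize the conflict between $\lambda_n^X$ and $\lambda_n^Y$ to regions where at most one component jumps--is exactly right and is the same structural insight the paper uses in its modulus argument.
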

\begin{proof}
Since $x_n(t)$ and $y_n(t)$ converge in the $J_1$ topologies on $X$ and $Y$,
respectively, there are compact
 sets $K_X$ and $K_Y$ in $X$ and $Y$ such that
$x_n(t)\in K_X$, and $y_n(t)\in K_Y$  for all $n$ and $t\in [0,1]$. Therefore
 $z_n(t)\in K=K_X\times K_Y$ for all $n$ and $t\in [0,1]$. We need to control
  uniformly the $D[0,1]$ modulus of continuity $\omega_h(z_n(\cdot))$   of 
  $z_n(t)=(x_n(t),y_n(t))$ where
$$
\omega_h(z(\cdot))=\sup_{t_1,t_2: |t_1-t_2|\le h}\inf_{\tau\in (t_1,t_2)} 
\max [\Delta_{(t_1,\tau)}(z(\cdot)),\Delta_{(\tau,t_2)}(z(\cdot))]
$$
and 
$$\Delta_{(a,b)}(z(\cdot))=\sup_{t,s\in (a,b)} d(z(t),z(s))$$
is the oscillation of $z(\cdot)$ in the interval $(a,b)$. We can take
$d(z_1,z_2)=d_1(x_1,x_2)+d_2(y_1,y_2)$ where $d, d_1, d_2$ are the metrics in
 $Z,X,Y$ respectively. Then with the obvious definitions of $\Delta_{(a,b)}
 (x(\cdot))$ and $\Delta_{(a,b)}(y(\cdot))$,
\begin{align*}
\omega_h(z(\cdot))\le &\sup_{t_1,t_2: |t_1-t_2|\le h}\inf_{\tau\in (t_1,t_2)} 
\max [\Delta_{(t_1,\tau)}(x(\cdot)),\Delta_{(\tau,t_2)}(x(\cdot))]\\
&\qquad+ \sup_{t_1,t_2: |t_1-t_2|\le h}\sup_{\tau\in (t_1,t_2)} \max 
[\Delta_{(t_1,\tau)}(y(\cdot)),\Delta_{(\tau,t_2)}(y(\cdot))].
\end{align*}
The convergence of $x_n(\cdot)$ and $y_n(\cdot)$ in the corresponding $J_1$
 topologies guarantees that
\begin{align*}
\lim_{h\to 0}&\limsup_{n\to \infty}\bigg[\sup_{t_1,t_2: |t_1-t_2|\le h}
\inf_{\tau\in (t_1,t_2)} \max [\Delta_{(t_1,\tau)}(x_n(\cdot)),
\Delta_{(\tau,t_2)}(x_n(\cdot))]\\
&\qquad+ \sup_{t_1,t_2: |t_1-t_2|\le h}\inf_{\tau\in (t_1,t_2)} 
\max [\Delta_{(t_1,\tau)}(y_n(\cdot)),\Delta_{(\tau,t_2)}(y_n(\cdot))]\bigg]=0.
\end{align*}
Since the jumps of
$x_n(\cdot)$ and $y_n(\cdot)$ converge individually to the jumps of $x(\cdot)$ 
and $y(\cdot)$ while $x(\cdot)$ and $y(\cdot)$ do not have any  common jumps,
for any $\epsilon >0$ there is  a $\delta>0$ such that  all  the jumps of 
$x_n(\cdot)$ and $y_n(\cdot)$ of size at least $\epsilon>0$ are uniformly   
separated from one another by some $\delta=\delta(\epsilon)>0$ . We can now 
estimate the $D[0,1]$  modulus continuity of  $z_n(\cdot)$. 
 If $h<\delta$ any interval of length $h$ will have at most one 
jump of size larger than $\epsilon$. Therefore of the two components  
$x_n(\cdot)$ and $y_n(\cdot)$ only one of them can have a jump larger than 
$\epsilon$. If  $y_n(t)$  does not have a jump of size larger than $\epsilon$
  in $(t_1,t_2)$ and $|t_2-t_1|<h$  then
\begin{eqnarray*}
&\sup_{\tau\in (t_1,t_2)} \max [\Delta_{(t_1,\tau)}(y_n(\cdot)),\Delta_{(\tau,
t_2)}(y_n(\cdot))]\le \Delta_{(t_1,t_2)}(y_n(\cdot))\\
&\le 2\inf_{\tau\in (t_1,t_2)} \max [\Delta_{(t_1,\tau)}(y_n(\cdot)),
\Delta_{(\tau,t_2)}(y_n(\cdot))]+\epsilon
\end{eqnarray*}
Therefore 
$$
\lim_{h\to 0}\limsup_{n\to\infty} \omega_h(z_n(\cdot))=0
$$
and we are done.
\end{proof}
\begin{corollary}\label{cor7.1}

(i) If $x^i_n$ converge to $x^i$, $i=1,...,d$ in the $J_1$ topology on 
$D([0,1]; X_i)$, where $X_i$ are Polish spaces, and for  any pair $i\not=j$
 the limits $x^i$ and $x^j$ have no common jumps, i.e
the discontinuity points $U_l=\{t: x^l(t-0)\not= x^l(t\})$ for $l=i$ and $l=j$
are disjoint, then as $n\to\infty$ the $d$-vector functions $\{ x_n^i,\, 
i=1,...,d\}$ converge to $\{ x^i,\, i=1,...,d\}$ in the $J_1$ topology of 
the product space $D([0,1];\prod^d_{i=1}X_i)$.

(ii) Let $x_n^i$ and $x^i$ satisfy conditions of (i) with $X_i=R^q,\, 
i=1,...,d$ for some integer $q\geq 1$. Then $\sum_{i=1}^dx_n^i$ converges
in the $J_1$ topology of $D([0,1];R^q)$ to $\sum_{i=1}^dx^i$.

(iii) Let $\{P_n,\, n\geq 1\}$ be probability measures on $D([0,1];
\prod_{i=1}^dX_i)$, where $X_i, i=1,...,d$ are the same as in (i), and
suppose that the marginals on $D([0,1];X_i),\, i=1,...,d$ of $P_n$'s 
converge weakly with respect to the $J_1$ topology while the  joint finite
 dimensional distributions of $P_n$ converge on $D([0,1];\prod_{i=1}^dX_i)$
  to a limit $P$. If  $P$ almost surely the components of $d$-vector functions
  $(x_1(t),...,x_d(t))$ have no common jumps pairwise then $P_n$ weakly
   converges to $P$ as $n\to\infty$ in the $J_1$ topology.
\end{corollary}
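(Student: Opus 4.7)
The plan is to reduce all three parts to Lemma~\ref{lem7.0}, using induction, continuous mapping, and a Skorokhod representation argument, respectively.

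For part (i) I would proceed by induction on $d$. The base case $d=2$ is exactly Lemma~\ref{lem7.0}. Assuming the statement for $d-1$, set $y_n=(x_n^1,\ldots,x_n^{d-1})$ and $y=(x^1,\ldots,x^{d-1})$; the induction hypothesis gives $y_n\to y$ in the $J_1$ topology of $D([0,1];\prod_{i=1}^{d-1}X_i)$. The discontinuity set of $y$ is $\bigcup_{i<d}U_i$, where $U_i$ denotes the discontinuity set of $x^i$, and the pairwise no-common-jump hypothesis forces this union to be disjoint from $U_d$. Applying Lemma~\ref{lem7.0} to the pair $(y_n,x_n^d)$, viewed as sequences in the Polish spaces $\prod_{i=1}^{d-1}X_i$ and $X_d$, then delivers the convergence of $(x_n^1,\ldots,x_n^d)$ in the $J_1$ topology of the full product.

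Part (ii) is essentially a continuous-mapping statement: the addition map $(y_1,\ldots,y_d)\mapsto y_1+\cdots+y_d$ from $(\bbR^q)^d$ to $\bbR^q$ is continuous, so the last assertion of Lemma~\ref{lem7.0} (whose proof extends without change to any finite number of factors) combined with part (i) gives $J_1$-convergence of the sums in $D([0,1];\bbR^q)$.

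For part (iii) I would first observe that marginal tightness in the $J_1$ topology of each $D([0,1];X_i)$ implies tightness of $\{P_n\}$ on the Polish space $\prod_{i=1}^d D([0,1];X_i)$ equipped with the product of the $J_1$ topologies, since products of relatively compact sets are relatively compact. Every weak subsequential limit of $P_n$ on this product space must, by the hypothesis on finite dimensional distributions, coincide with $P$, so $P_n\Rightarrow P$ in the product topology. Applying Skorokhod's representation theorem on this Polish product space, I can realize $(x_n^1,\ldots,x_n^d)\to(x^1,\ldots,x^d)$ almost surely with convergence in each factor's $J_1$ topology. On the almost-sure event that the components of $(x^1,\ldots,x^d)$ have no pairwise common jumps, part (i) applies pathwise to yield $J_1$-convergence in $D([0,1];\prod X_i)$; this pathwise convergence then upgrades the weak convergence of $P_n$ to the $J_1$ topology of the full Skorokhod space.

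The principal subtlety, which is where most of the work lies, is in part (iii): one must cleanly separate the $J_1$ topology on $D([0,1];\prod X_i)$ from the (strictly weaker) product of $J_1$ topologies on $\prod D([0,1];X_i)$, and use the Skorokhod representation as the bridge that converts the pairwise no-common-jump hypothesis into joint $J_1$-convergence via the pathwise statement of (i).
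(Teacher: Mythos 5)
Your proposal is correct. Parts (i) and (ii) are handled exactly as intended: induction reducing to Lemma~\ref{lem7.0} (noting that the discontinuity set of a $(d-1)$-tuple is the union of the components' discontinuity sets, which remains disjoint from $U_d$ by the pairwise hypothesis), and then the continuous-mapping observation for sums. For (ii) the parenthetical about extending the last assertion of Lemma~\ref{lem7.0} to more factors is unnecessary — once (i) yields $J_1$-convergence of the $d$-tuple in $D([0,1];(\bbR^q)^d)$, composing with the continuous addition map is a general elementary fact about $J_1$-convergence and does not require re-proving anything.

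Part (iii) is the only one requiring real work and you have the right mechanism. The crucial conceptual point, which you identify explicitly, is the distinction between $D([0,1];\prod X_i)$ with its (finer) $J_1$ topology — requiring a single common time change across components — and $\prod_i D([0,1];X_i)$ with the (coarser) product of $J_1$ topologies — allowing separate time changes. Marginal tightness gives tightness on the product, FDD convergence pins down the subsequential limits, Skorokhod representation converts this into almost-sure coordinatewise $J_1$-convergence, and then the pathwise statement of (i) on the full-measure no-common-jump event upgrades to a common time change, hence to $J_1$-convergence in $D([0,1];\prod X_i)$; almost-sure convergence then gives weak convergence of laws. Two points worth keeping in mind when writing this up carefully: the identification of FDD convergence as determining subsequential limits should be restricted to time points at which both $P$ and the subsequential limit are almost surely continuous (a co-countable, hence dense, set), and the Borel $\sigma$-algebras of $D([0,1];\prod X_i)$ and $\prod_i D([0,1];X_i)$ must coincide (both are generated by the coordinate evaluations), which is what legitimises viewing $P_n$ and $P$ as measures on either space. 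Both of these are standard and your argument implicitly uses them correctly.
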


While considering real valued  random variables   in the domain of attraction
 of a stable law it is natural to consider     tail behavior of the form
\begin{equation}\label{eq5.1}
\lim_{T\to\infty} T^\al(\ln T)^{-k} P[\pm X\ge T]=c_\pm.
\end{equation}
If $X$ is  $R^d$ valued then  a tail behavior similar to the one dimensional 
 case  above will be to require that for every continuous  function $f$ on the unit
  sphere  $S^{d-1}$ the limit

\begin{equation}\label{eq5.2}
 \lim_{\rho\to\infty}\rho^{\alpha} (\ln \rho)^{-k}E\big[{\bf 1}_{|X|\ge \rho}
f\big(\frac{X}{|X|}\big) \big]=\int_{S^{d-1}} f(s)\nu(ds)
\end{equation}
exists where $\nu$ is a finite nonnegative measure on $S^{d-1}$.  To make the
 connection we need only to think of $S^0$ as
$\pm 1$ and $\nu(\{\pm 1\})=c^\pm$. The following result essentially coincides
with Theorem 3.6 of \cite{Re} but for readers' convenience we provide its
proof here.

\begin{lemma}\label{l5.1} The relation (\ref{eq5.2})  holds true for every
 bounded continuous $f$ on $S^{d-1}$ if and only if  
\begin{equation}\label{eq5.3}
\lim_{\rho\to\infty} \rho^\al (\ln \rho)^{-k} E[W(\frac{X}{\rho})]=
\int_{S^{d-1}}\int_0^\infty  W(sr)\frac{\alpha\nu(ds)\,dr}{r^{1+\alpha}}
\end{equation}
for every $W$ from the space ${\mathcal W}$ of bounded continuous functions 
satisfying (\ref{2.1+}).
\end{lemma}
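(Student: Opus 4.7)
\textbf{Proof plan for Lemma \ref{l5.1}.} The natural object is the family of measures on $(0,\infty]\times S^{d-1}$ defined by
\[
\mu_\rho(B)=\rho^{\al}(\ln\rho)^{-k}\,P\bigl((|X|/\rho,\,X/|X|)\in B\bigr),
\]
and my plan is to show that (\ref{eq5.2}) is equivalent to the vague convergence $\mu_\rho\to\mu$ on $(0,\infty]\times S^{d-1}$, where $\mu(dr,ds)=\al\,r^{-1-\al}\,dr\,\nu(ds)$, and then to show that this vague convergence is equivalent to (\ref{eq5.3}). The first equivalence is a scaling argument: replacing $\rho$ by $T\rho$ in (\ref{eq5.2}) and using $\ln(T\rho)/\ln\rho\to1$ yields
\[
\lim_{\rho\to\infty}\rho^{\al}(\ln\rho)^{-k}E\bigl[{\bf 1}_{|X|\ge T\rho}f(X/|X|)\bigr]=T^{-\al}\int_{S^{d-1}}f\,d\nu,
\]
which, by the standard Portmanteau reasoning (the limit does not charge $\{r=T\}\times S^{d-1}$), pins $\mu$ down as the product measure above.

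For the forward direction, given $W\in\cW$ I would split
\[
\int W(rs)\,\mu_\rho(dr,ds)=\Bigl(\int_{r\le\ve}+\int_{\ve<r<T}+\int_{r\ge T}\Bigr)W(rs)\,\mu_\rho(dr,ds).
\]
The middle integral converges to the corresponding integral for $\mu$ by vague convergence on the compact piece bounded away from $0$ (the function $W$ is bounded continuous there). The tail integral is bounded by $\|W\|_\infty\,\mu_\rho([T,\infty]\times S^{d-1})$, which tends to $\|W\|_\infty T^{-\al}\nu(S^{d-1})$ and vanishes as $T\to\infty$. The small-$r$ integral is handled by the bound $|W(rs)|\le Cr^2$ valid near $0$ (from (\ref{2.1+})) and a layer-cake identity,
\[
\int_{r\le\ve}r^2\,\mu_\rho(dr,S^{d-1})=2\int_0^{\ve}t\,\rho^{\al}(\ln\rho)^{-k}P\{|X|\ge t\rho\}\,dt,
\]
which after the change of variables $u=t\rho$ becomes $2\rho^{\al-2}(\ln\rho)^{-k}\int_0^{\ve\rho}uP\{|X|\ge u\}\,du$; because $\al<2$ and $P\{|X|\ge u\}=O(u^{-\al}(\ln u)^k)$ by (\ref{eq5.2}) with $f\equiv1$, this quantity is $O(\ve^{2-\al})$ uniformly in large $\rho$. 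A matching estimate $\int_{r\le\ve}r^2\,\mu(dr,S^{d-1})=O(\ve^{2-\al})$ holds for the limit, so letting first $\rho\to\infty$ and then $\ve\to0$, $T\to\infty$ gives (\ref{eq5.3}).

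For the converse, given $f\in C(S^{d-1})$ I take continuous cutoffs $\phi_\delta:[0,\infty)\to[0,1]$ with $\phi_\delta\equiv0$ on $[0,\delta]$ and $\phi_\delta\equiv1$ on $[1,\infty)$, and apply (\ref{eq5.3}) to $W(x)=\phi_\delta(|x|)f(x/|x|)$. This $W$ is bounded continuous (continuity at $0$ because $\phi_\delta$ vanishes there) and satisfies (\ref{2.1+}) with constant $C=\|f\|_\infty(1+\delta^2)/\delta^2$. Sandwiching ${\bf 1}_{[1,\infty)}$ between two such cutoffs from below and above, and using that the limit measure assigns no mass to $\{r=1\}$, gives (\ref{eq5.2}).

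The step I expect to be the main obstacle is the uniform-in-$\rho$ control of the small-$r$ contribution, because the normalization carries $(\ln\rho)^{-k}$ and one must be sure that the bound $O(\ve^{2-\al})$ survives this normalization for all $\ve>0$ and all sufficiently large $\rho$; the layer-cake rewriting together with the crude tail bound $P\{|X|\ge u\}\le1$ for small $u$ and the asymptotic tail bound from (\ref{eq5.2}) for large $u$ is precisely what makes this uniformity go through.
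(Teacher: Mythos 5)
Your proof is correct and follows essentially the same route as the paper: first use scaling (replace $\rho$ by $T\rho$ in (\ref{eq5.2})) to get the limit for the piece of the integral bounded away from $r=0$, then control the small-$r$ contribution via the quadratic bound from (\ref{2.1+}) and a layer-cake/integration-by-parts estimate that yields $O(\ve^{2-\al})$ uniformly in large $\rho$, and for the converse approximate ${\bf 1}_{[1,\infty)}$ by continuous cutoffs belonging to $\cW$. The vague-convergence framing and the explicit $\phi_\delta$ sandwich are presentation choices; the substance and the one genuinely delicate step (uniformity of the small-$r$ bound under the $(\ln\rho)^{-k}$ normalization, which you identify and handle correctly) coincide with the paper's argument.
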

\begin{proof}
In (\ref{eq5.2}) we can replace $\rho$ by $\rho z$ with $z>0$, to get
 \begin{align}\label{eq5.4}
 \lim_{\rho\to\infty}\rho^{\alpha} (\ln \rho)^{-k}E\big[{\bf 1}_{\frac{|X|}
 {\rho}\ge z}f\big(\frac{X}{|X|}\big) \big]&=\frac{1}{z^\al}\int_{S^{d-1}} 
 f(s)\nu(ds)\notag\\
&=\int_{S^{d-1}} f(s)\nu(ds)\int_z^\infty\frac{\alpha}{u^{1+\alpha}}du
\end{align}
It is now easy to conclude that if $V(r,s)$ is a continuous function of $r>0$ 
and $s\in S^{d-1}$ and   for some $\delta>0$ it is identically $0$ if $r\le 
\delta$ then
 \begin{align}\label{eq5.5}
 \lim_{\rho\to\infty}\rho^{\alpha} (\ln \rho)^{-k}E\big[ V(\frac{|X|}{\rho}, 
 \frac{X}{|X|}) \big]=\int_0^\infty\int_{S^{d-1}} V(u,s)\nu(ds)\frac{\alpha}
 {u^{1+\alpha}}du.
\end{align}
 We now take $V(r,s)=W(rs)$ and obtain (\ref{eq5.3}). To control the 
 contribution near $0$ for $W\in\mathcal W$ we denote by $R(x)$
the tail probability $P[|X|\ge x]$ and obtain
\begin{eqnarray*}
&\rho^\alpha (\ln\rho)^{-k} E[W(\frac{X}{\rho}){\bf 1}_{\frac{|X|}{\rho}
\le \delta }]\le C\rho^\alpha (\ln \rho)^{-k} E[(\frac{X^2}{\rho^2})
{\bf 1}_{\frac{|X|}{\rho}|\le \delta}]\\ 
&=C \rho^{\alpha-2} (\ln \rho)^{-k} E[X^2{\bf 1}_{|X|\le \delta T}]
=-C T^{\alpha-2} (\ln T)^{-k} \int_0^{\delta \rho} x^2 dR(x)\\ 
&\le C \rho^{\alpha-2} (\ln \rho)^{-k}
 \int_0^{\delta \rho} 2\,x\,R(x)\,dx\\ 
&\le C\rho^{\al-2} (\ln \rho)^{-k}\int_0^{\delta\rho}  x (1+x)^{-\alpha}
(\ln (2+x))^kdx\le C\delta^{2-\alpha }
\end{eqnarray*}
is uniformly controlled because $\alpha<2$. Finally to  go from 
(\ref{eq5.3}) to (\ref{eq5.2}), we take $W(x)={\bf 1}_{[1,\infty]}(x) f(\frac{x}
{|x|})$ which can be justified by approximating ${\bf 1}_{[1,\infty]} $ by 
continuous functions. 
\end{proof}

\begin{remark}\label{remA1}
Let  $\{X_n\}$  be a sequence of independent and identically distributed random
 vectors in $R^d$ that satisfy (\ref{eq5.3}).  Let
$$
S_N(t)=\sum_{1\le n\le Nt} X_n
$$
and
\begin{equation}\label{eqA3}
\Xi_N(t)=\frac{1}{b_N}[S_N(t)-Nta_N]
\end{equation}
where the normalizer $b_N$ is given by
$$
b_N=N^{\frac{1}{\al}}(\frac{\ln N}{\al})^{\frac{k}{\al}}
$$
and the centering $a_N$ is given by
$$
a_N=E\big[\frac{b_N^2 X_1}{b_N^2+|X_1|^2}\big].
$$
 Then, according to standard limit theorems for sums of independent random
  vectors (see, for instance, \cite{Rv} and Section 7.2 in \cite{Re}), 
  the processes  $\Xi_N(t)$ converge in the Skorokhod $J_1$ topology 
  on $D[[0,T]; R^d]$ to a limiting stable process $\Xi$ with the 
  characteristic function of the increments $\Xi(t)-\Xi(s)$ given by 
\begin{equation}\label{eqA4}
E[e^{i<\xi,\Xi(t)-\Xi(s)>}]=\exp[(t-s)\psi(\xi)]
\end{equation}
where
\begin{align*}
\psi(\xi)&=\int_{R^d\setminus \{0\}}      [e^{i<\xi, y>}-1- \frac{i<\xi,y>}
{1+|y|^2}] \mu(dy)\\
&=\int_{S^{d-1}}\int_0^\infty  [e^{i<\xi, sr>}-1- \frac{i<\xi,sr>}{1+|r|^2}] 
\frac{\nu(ds) dr}{r^{1+\alpha}}
\end{align*}
It follows that if $\nu$ is concentrated on axes then components of the
process $X(t)$ are independent.
The  proof relies on the calculation 
$$
\lim_{N\to\infty} N\ln [E[e^{i<\xi, X^\prime_N>}]]=\psi(\xi)
$$ 
where $X^\prime_N=\frac{1}{b_N}[X_1-a_N]$.
\end{remark}

\begin{remark}\label{remA2}
The basic assumptions involved in proving the convergence to the  stable 
process with independent  increments is independence of the random variables, 
a common distribution with the correct tail behavior that puts them in the 
domain of attraction of the stable distribution with L\'{e}vy measure  
$\frac{\nu(ds)dr}{r^{1+\alpha}}$ on $R^d$. It is possible that we have a 
random vector in which two components are not independent but in the limiting 
distribution they become independent.
If the L\'{e}vy measure is $\frac{\nu (ds)dr}{r^{1+\alpha}}$, then for  $R^d$ 
 to split  as a sum $\oplus V_j$ with   components of the random vector 
 corresponding to different $V_j$  being  
 mutually independent it is necessary and sufficient that $\nu$ is supported 
 on the union of the subspaces $V_j$. In other words $\nu[ |x_j|>0, |x_{j'}|>0]
 =0 $ where $x=\sum x_j$ is the natural decomposition of $x$
into components from $\{V_j\}$. This requires that
\[
\lim_{\rho\to\infty} \rho^{\al}(\ln\rho)^{-k} P\{|X_{1j}|>\rho, |X_{1j'}|>
\rho\}=0
\]
for any $j\ne j'$ where $X_1=(X_{11},X_{12},...,X_{1d})$ and $X_1$ is the same 
as in Remark \ref{remA1}. 
\end{remark}

\begin{remark}\label{remA3} Let $\{ X_n\}$ be as in Remark \ref{remA1} and
$T$ be a linear map $R^d\to R^m$. Then the process 
$$
Y_N(t)=T\Xi_N(t)=\sum_{1\le n\le Nt} \frac{1}{b_N}(TX_n-Ta_N)
$$
 will, after normalization,  converge to $Y(t)=T\Xi(t)$, a process with 
independent increments given by
$$
E[e^{i<\xi',Y(t)-Y(s)}]=\exp[(t-s)[i <\gamma, \xi'>+\psi'(\xi') ]]
$$ 
where
$$
\psi^\prime(\xi^\prime)=\int_{R^m\setminus \{0\}} [e^{i<\xi', s'r>}-1- 
\frac{i<\xi',sr>}{1+|r|^2}] \frac{\nu'(ds') dr}{r^{1+\alpha}}
$$
and 
$$
\gamma'=\int_{R^d} \bigg[\frac{T(sr)} {1+|T(sr)|^2} - \frac{T(sr)} {1+|r|^2}
\bigg] \frac{\nu(ds) dr}{r^{1+\alpha}}.
$$
The amount by which the process needs centering is only unique up to a  
constant and this requires us to make the adjustment  with  the term $\gamma^
\prime t$ which is the difference between two possibly infinite terms.  
It is not hard 
to see that as $N\to\infty$, $b_N\to\infty$ and by the bounded convergence 
theorem, $\frac{a_N}{b_N}\to 0$.
\end{remark}

In Section \ref{sec5} we proved convergence of the finite dimensional distributions to a stable L\'{e}vy process, by appealing to \cite{TK}.
This required a rearrangement  of the monomial terms, that make
up the polynomial. The new process converged in $J_1$ topology. If condition
 (\ref{A3}) below
were satisfied then the rearrangement was not needed and the original process
 converged  in the $J_1$ topology. We provide here
an alternate proof that does not require this modification, but yields 
directly  finite dimensional convergence  for stationary $\ell$ dependent 
sums as well as convergence in $J_1$ topology under the additional assumption 
(\ref{A3}). 

 \medskip
Let $H$ be  a finite dimensional Euclidean space and  $\{X_i,\, i\geq 1\}$ be 
a stationary sequence of $H$ valued random variables that are $\ell$-dependent 
and have  regularly varying heavy  tails with an index
$\al\in(0,2)$ and a L\' evy measure $M$, i.e. (\ref{eq5.2}) holds true with 
$X=X_1$ and $\nu=M$. We will assume that for $d=2\ell-1$ and all $f\in 
{\mathcal W}(H^d)$,
\begin{equation}\label{A1}
\lim_{N\to\infty} N E[ f(\frac{X_1}{b_N},\ldots, \frac{X_d}{b_N})]=\int_{H^d}
f(x_1,\ldots,x_d) M^d (dx).
\end{equation}
Observe that when the stationary $\ell$-dependent sequence of random vectors
above is obtained in the framework of Theorem \ref{thm2.3} then relying on
Lemma \ref{lem4.1} and Corollary \ref{cor4.3+} we see that the condition
(\ref{A1}) is automatically satisfied.

\begin{lemma} Suppose that (\ref{A1}) holds true.
Then for any integer $k\geq 1$ the limit
\begin{equation}\label{A1+}
\lim_{N\to\infty} N E[ f(\frac{X_1}{b_N},\ldots, \frac{X_k}{b_N})]=\int_{H^d} 
f(x_1,\ldots,x_k) M^k (dx)
\end{equation}
exists and $M^k$ can be computed from $M^d$.
\end{lemma}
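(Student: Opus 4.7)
The plan is to extend the hypothesis (\ref{A1}) from window length $d = 2\ell - 1$ to arbitrary $k \ge 1$, splitting into the cases $k \le d$ and $k > d$.

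For $k \le d$, I would argue by coordinate projection. Given $f \in \cW(H^k)$, set $\tilde f(x_1,\ldots,x_d) := f(x_1,\ldots,x_k)$. The bound $|f(y)| \le C|y|^2/(1+|y|^2)$ on $H^k$ together with boundedness at infinity yields $|\tilde f(x)| \le C'|x|^2/(1+|x|^2)$ on $H^d$, so $\tilde f \in \cW(H^d)$. Applying (\ref{A1}) to $\tilde f$ shows that the limit in (\ref{A1+}) exists and equals
\[
\int_{H^d} f(x_1,\ldots,x_k)\, M^d(dx_1,\ldots,dx_d),
\]
which defines the pushforward measure $M^k := (\pi_k)_* M^d$ on $H^k$, where $\pi_k : H^d \to H^k$ is projection onto the first $k$ coordinates; this is Lévy-type because any set bounded away from $0$ in $H^k$ pulls back to a set bounded away from $0$ in $H^d$.

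For $k > d$, the key ingredient is the one-large-jump principle, which rests decisively on $\ell$-dependence. Since $X_i$ and $X_j$ are independent whenever $|i-j| \ge \ell$ and $P\{|X_1| \ge \del b_N\} \sim c/N$, we have
\[
N\,P\{|X_i| \ge \del b_N,\ |X_j| \ge \del b_N\} = N\, P\{|X_1| \ge \del b_N\}^2 \longrightarrow 0.
\]
Hence only configurations in which all coordinates of significant size lie within a single window of at most $\ell$ consecutive indices can contribute to the limit of $N E[f(X_1/b_N,\ldots,X_k/b_N)]$. I would formalize this by introducing a partition of unity subordinate to windows $[j, j+d-1]$ with $1 \le j \le k-d+1$, truncating $f$ to each window (replacing the coordinates outside by $0$ at leading order, justified by the above tail estimate), and applying (\ref{A1}) to each piece after reindexing by stationarity. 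Summing over $j$ produces a measure $M^k$ on $H^k$ supported on configurations whose nonzero coordinates lie in a window of length at most $\ell$, and on each such window the measure is determined, via projection and translation, by $M^d$ alone.

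The main obstacle I expect is the bookkeeping required to avoid double counting contributions from overlapping windows and to control $f$ at the interfaces between them. I would handle this by restricting each window summand to the event that the maximum of $|X_i|/b_N$ over that window strictly dominates the corresponding maxima outside the window, so that the summands become essentially disjoint, and then absorb the residual cross terms into the $o(1)$ error using the negligibility estimate above.
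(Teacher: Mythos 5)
Your route is genuinely different from the paper's and, in spirit, valid, but it is technically heavier than you acknowledge. The paper does \emph{not} attempt a direct window decomposition of the prelimit. Instead it first observes that the quantities $N\,E[f(X_1/b_N,\ldots,X_k/b_N)]$ are uniformly bounded in $N$ (since $f\in\cW$ and $N\,P[|X_1|\ge \del b_N]=O(1)$), so by a diagonal argument one can pass to a subsequence of $N$'s along which (\ref{A1+}) holds simultaneously for all $k$ and all $f$, producing candidate limit measures $M^k$. It then proves \emph{uniqueness} of these subsequential limits: decompose $H^J\setminus\{0\}$ into pieces $H^J_A$ according to the set $A$ of nonvanishing coordinates; $\ell$-dependence (your ``one-big-jump'' observation) forces $M^J_A=0$ unless $\mathrm{sp}(A)\le\ell$; and for such $A$ the piece $M^J_A$ is carried on the extended interval $\widehat A$ of span $\le 2\ell-1=d$, so by translation invariance it is a fragment of $M^d$. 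Since every subsequential $M^k$ is thereby recovered from $M^d$, the full limit exists. This sidesteps entirely the obstacles your plan runs into: your dominance events for size-$d$ windows are \emph{not} disjoint (if the argmax of $|X_i|$ sits at $i_0$, every window containing $i_0$ shares the same interior maximum, so you need an explicit tiebreaking rule and separate treatment near the boundary $i_0<\ell$ or $i_0>k-\ell+1$), and $N\,E[f\cdot{\bf 1}_{E_j}]$ does not immediately reduce to an application of (\ref{A1}) because the indicator depends on coordinates outside window $j$, requiring a further approximation step to pass to a pure $d$-window expectation. Your $k\le d$ projection argument is fine and coincides with what the paper does implicitly; the difference is all in $k>d$, where the paper's ``take a subsequence, then identify the limit'' scheme buys you freedom from precisely the bookkeeping you flag as the main obstacle.
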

\begin{proof}
If for $k>d$ the limits do not exist then because of stationarity  we can 
always select a subsequence such that the limits exist for all values of $k$
 and $f\in {\mathcal W}(H^k)$
and we  will continue to denote  
corresponding limiting measures by $M^k$.   If we can recover $M^k$ from $M^d$, then all subsequences will have the same limit and hence the limit as $N\to\infty$ will exist. For $-\infty<a\le b<\infty$  with  $b-a\le 2\ell-2$, $J=\{i: a\le i\le b\}$ will be of size at most $2\ell-1$ and the limits $M^J$ on $H^J$ will exist and be translation invariant. For  any partition of $J$ into 
disjoint subsets $A$ and $B$ we can write $H^J\backslash \{0\}$ as the disjoint union
$$
H^J\backslash \{0\}=\cup_{ A \in {\mathcal  A}(J)}H^J_A
$$
where ${\mathcal A}(J)=\{ A:\, (A,B)\in{\mathcal P}(J)\}$ and ${\mathcal P}(J)$ is the set of partitions with nonempty $A$ and
$$
H^J_A=\{x\in H^J: x_i\not=0\ \forall \ i\in A;\ x_i=0\ \forall i\in B\}.
$$  
We can write
$M^J=\sum_{A\in{\mathcal A}(J)}M^J_A$, where
$M^J_A$ is the restriction of $M^J$ to $H^J_A$. By the $\ell$ dependence we 
obtain that if 
$|i-j|\ge \ell$ then for any $\delta>0$
$$
\lim_{N\to\infty} N P[|X_i|\ge \delta  b_N, |X_j|\ge \delta b_N ]=0
$$
which in turn implies that $M^J[ x_i\not=0, x_j\not=0]=0$ whenever $i,j\in J$ 
and $|i-j|\ge \ell$.  Any set $A\in\mathcal A(J)$ 
can be ordered $a_1<a_2<\ldots a_r$  and  integers $r$, and $\sigma_i=
a_{i+1}-a_i$ for $i=1,\ldots,r-1$ determine $A$ up to a translation. Set  
$sp(A)=1+\sigma_1+\cdots+\sigma_r=a_r-a_1+1$.
Then $M^J_A=0$ unless $sp(A)\le \ell$. For any $A$ consider the extended 
interval 
${\widehat A}=\{a_r-\ell+1\le i\le a_1+\ell-1\}$. It is the set of  integers 
$j$ such that
$|j-i|\le \ell-1$ for all $i\in A$. In particular $M^J_A$ is determined by  
$M^{\widehat A}_A$ if $J\supset \widehat A$.
We write 
$$
M^J=\sum_{A\in{\mathcal A}(J)}M^J_A=\sum_{A\in{\mathcal A}(J)\atop {\widehat A}
\subset J}M^J_A+\sum_{A\in{\mathcal A}(J)\atop {\widehat A}\not\subset J}M^J_A.
$$
Since $M^J$ determines $M^J_A$ it determines also $M^{\widehat A}_A$ if 
${\widehat A}\subset J$.
If ${\widehat A}\not\subset J$ we can replace $J$ by $I=J\cup {\widehat A}$ 
and 
$$
M^J_A=\sum_{B\supset A} M^I_B=\sum_{B\supset A} M^{\widehat B}_B
$$
Clearly, $M^{\widehat B}_B=0$ unless $sp(B)\le \ell$ and so $sp(\widehat B)
\le 2\ell-1$. By translation invariance all these measures are determined by 
$M^{2\ell-1}$. 
\end{proof}
Sets  with $sp(A)\le \ell$ can be characterized by  $r$  and $a_1<a_2
\ldots<a_r$ with $a_r-a_1\le \ell-1$ and up to a translation by $r$ and 
$\sigma_i\ge 1 $ for $i=1,\ldots ,r-1$ with $\sum_{i=1}^{r-1}\sigma_i\le 1$.
We map $H^{\widehat A}$ into $H$ by $x=\sum_{i\in  {\widehat A}} x_i$ and the
 push forward  of the measure  $M^{\widehat A}_A$  is denoted  by $M^{r,
 \{\sigma_i\}}$ on $H$. Let
\begin{equation}\label{A4}
M^\ast=\sum_{r,\sigma_1,\ldots,\sigma_r} M^{r,\{\sigma_i\}}.
\end{equation}
For each $r,\{\sigma_i\}$ we set 
$$
\gamma_{r,\{\sigma_i\}}=\int_{H^{\widehat A} }\big[\frac{\sum_{i\in A} x_i}
{1+|\sum_{i\in A} x_i|^2}-\sum _{i\in A} \frac{x_i}{1+|x_i|^2}\big]
M^{\widehat A}_A( dx)
$$
and let
$$
\gamma=\sum_{r,\{\sigma_i\}} \gamma_{r,\{\sigma_i\}}.
$$
We note that $\gamma_{1,\{\sigma_i\}}=0$ because $A$ contains only one integer.

\begin{theorem}
Under the condition (\ref{A1}) the  finite dimensional distributions of the 
process
\begin{equation}\label{A2}
\xi_N(t)=\sum_{1\le i\le Nt}\frac{1}{b_N}  [X_i-a_N]
\end{equation}
converge weakly to those of a stable process with the logarithm of its 
characteristic function  at time $t$ given by
\begin{equation}\label{A8}
\log \psi_t(u)=it\langle \gam ,u\rangle+t\int_H [e^{i\langle u,x \rangle}-1-
\frac{i\langle u, x\rangle}{1+\|x\|^2} ]M^\ast(dx)
\end{equation}
where $M^\ast$ is computed from $M^d$ as in (\ref{A4}). In addition, 
if for  all $i\not= j$ 
\begin{equation}\label{A3}
\lim_{N\to\infty} N P[|X_i|\ge \delta  b_N, |X_j|\ge \delta b_N ]=0
\end{equation}
then there is no need in the assumption (\ref{A1}) as (\ref{A1+})
automatically holds true for any integer $k\geq 1$ and the weak convergence 
as $N\to\infty$ of the processes $\xi_N$
takes place in the $J_1$ topology. Furthermore, in the latter case $M^*=M$.
\end{theorem}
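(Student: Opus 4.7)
My strategy is to analyze the joint characteristic function of finite-dimensional increments of $\xi_N$, exploit $\ell$-dependence to split the sum into asymptotically independent blocks, and identify the per-unit-time contribution with the L\'evy measure $M^\ast$ and drift $\gamma$ via the cluster decomposition developed in the lemma preceding the theorem.

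First, by stationarity it suffices to study $\phi_N(u):=\log E[\exp(i\langle u,\xi_N(1)\rangle)]$. I would partition $\{1,\ldots,N\}$ into alternating ``large'' blocks of length $L$ and ``buffer'' gaps of length $\ell$. Since consecutive large blocks are separated by at least $\ell$ indices they become mutually independent, while the buffer comprises only $O(\ell N/L)$ summands and so contributes negligibly after first letting $N\to\infty$ and then $L\to\infty$. This reduces the computation to $\phi_N(u)\approx (N/L)\bigl(E[\exp(i\langle u,\sum_{i=1}^L(X_i-a_N)/b_N\rangle)]-1\bigr)$. Within a block I would decompose according to which subset $A\subset\{1,\ldots,L\}$ of indices has $|X_i|/b_N$ of order one. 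By (\ref{A1}) and the preceding lemma, the joint tail contribution from cluster shape $A$ with $\mathrm{sp}(A)\le\ell$ is exactly $M^{\widehat A}_A$, while the complementary ``small-index'' contribution vanishes because $\alpha<2$ controls the truncated second moment of the normalized variables. Pushing forward by $x\mapsto\sum_{i\in A}x_i$ and summing over all cluster types $(r,\{\sigma_i\})$ with span at most $\ell$ produces $M^\ast$ as in (\ref{A4}). The drift $\gamma$ emerges from rewriting the componentwise centering $\sum_{i\in A}x_i/(1+|x_i|^2)$ as the aggregate centering $(\sum_{i\in A}x_i)/(1+|\sum_{i\in A}x_i|^2)$: the difference, integrated against $M^{\widehat A}_A$ and summed over cluster types, is precisely $\gamma$.

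For multi-time convergence I would apply the same argument to each increment $\xi_N(t_j)-\xi_N(t_{j-1})$. The $\ell$-dependence forces asymptotic independence across disjoint time intervals (the boundary region has at most $\ell$ indices and so contributes $o(1)$), so the joint log characteristic function factors as $\sum_j (t_j-t_{j-1})[i\langle\gamma,u_j\rangle+\psi^\ast(u_j)]$, identifying the limit as the claimed L\'evy process. For the refinement under (\ref{A3}), observe that (\ref{A3}) forces $M^{\widehat A}_A=0$ whenever $|A|\ge 2$, so (\ref{A1+}) is automatic from the marginal tail of $X_1$. Only singleton clusters survive, yielding $M^\ast=M$ and $\gamma=0$ since $\gamma_{1,\{\sigma_i\}}=0$. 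Tightness in the $J_1$ topology then follows from the standard criterion for heavy-tailed sums with well-separated large jumps: under (\ref{A3}) no two large $X_i$ occur within distance $\ell$ of one another on the rescaled time interval $[0,1]$, so the paths of $\xi_N$ asymptotically look like those of a sum of i.i.d.\ heavy-tailed random vectors, for which $J_1$ convergence is classical.

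The main obstacle is the bookkeeping in the cluster expansion: verifying that the truncation of $e^{iz}-1$ at the linear term produces an absolutely convergent correction against each $M^{\widehat A}_A$ (which uses the $|x|^3$ behaviour of the integrand near the origin together with $\alpha<2$), and that the sum over cluster shapes reproduces $\gamma$ and $M^\ast$ with no double-counting. A secondary technical point is controlling the ``mixed'' configurations in a block where some indices are moderately but not $b_N$-large; here I would rely on the growth estimate $|W(x)|\le C|x|^2/(1+|x|^2)$ of test functions in $\mathcal W$ together with $\alpha<2$ to show such configurations contribute $o(1)$ after scaling.
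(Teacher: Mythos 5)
Your finite-dimensional argument is essentially the paper's: block the index set into segments of length $L$ separated by $\ell$-gaps, discard the gaps and boundary fragments, observe the block sums are i.i.d., and identify the per-block contribution via the cluster decomposition $M^k=\sum_A M^k_A$, pushed forward by $x\mapsto\sum_{i\in A}x_i$ to give $M^\ast$, with the drift $\gamma$ emerging from swapping the componentwise centering $\sum_i x_i/(1+|x_i|^2)$ for the aggregate centering. The paper does this by feeding the block sums through standard stable limit theorems for i.i.d.\ vectors rather than by expanding the log characteristic function as $(N/L)(E[e^{i\langle u,\cdot\rangle}]-1)$, but these are the same argument dressed differently. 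Your observation that (\ref{A3}) kills all non-singleton clusters, hence forces $M^\ast=M$ and $\gamma=0$, also matches the paper.

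Where the proposal is genuinely incomplete is the $J_1$ tightness under (\ref{A3}). You assert that ``tightness follows from the standard criterion for heavy-tailed sums with well-separated large jumps'' because the paths of $\xi_N$ ``asymptotically look like'' those of i.i.d.\ sums. That sentence replaces the most laborious part of the paper's proof with a heuristic. The paper's strategy is: first establish that (\ref{A3}) together with Lemma~\ref{lem4.1} upgrades the block vectors $Z_i=(X_j,\,j\in B^+(i))$ to i.i.d.\ regularly varying vectors, so the \emph{block-sum} process $\zeta^k_N$ is $J_1$-convergent by the i.i.d.\ theory; then transfer $J_1$ tightness from $\zeta^k_N$ to the raw partial-sum process $\xi^k_N$ by controlling the $D[0,T]$ modulus of continuity on the good event $[F(N,\theta,\delta)]^c$ where no two $X_i$ within a $\theta N$-window both exceed $\delta b_N$ --- this is the content of estimates (\ref{A11})--(\ref{A14}) and the analysis splitting a block according to whether $|Y-ka_N|\le\epsilon b_N$ or not. ``Well-separated large jumps'' is not enough by itself: one must show that within a block the small contributions cannot conspire to look like a second jump, which is exactly what the bound $|X_i-a_N|\le|Y-ka_N|+(k-1)\delta b_N$ on $[F(N,\theta,\delta)]^c$ accomplishes. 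Alternatively the $J_1$ claim can be obtained by verifying the hypotheses of Proposition~\ref{prop5.1} (Tyran-Kami\'nska's Corollary 1.4), which the paper also notes; but either route requires detail that your proposal does not supply. You should either verify the conditions of that corollary explicitly (the key one being (\ref{5.2}), which is (\ref{A3})) or carry out the modulus-of-continuity transfer.
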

Note that since $\frac{a_N}{b_N}\to 0$ as $N\to\infty$, (\ref{A3}) is 
equivalent to
\begin{equation*}
\lim_{N\to\infty} N P[|X_i-a_N|\ge \delta  b_N, |X_j-a_N|\ge \delta b_N ]=0
\end{equation*}
\begin{proof}
 Let $k$ be an integer that will eventually  get large. To exploit $\ell$ 
 dependence we want to sum over blocks of size $k$ and leave gaps of size 
 $\ell-1$.  We divide the set of  positive integers into blocks of size 
 $k+\ell-1$. $B(r)=\{i: (r-1)(k+\ell-1)+1\le i\le (r-1)(k+\ell-1)\}$. Each 
 $B(r)$ consists of the initial segment  $B^+(r)$ of length $k$ and the gap 
 $B^-(r)$ of size $\ell-1$.
$Z^+=\cup_r B^+(r)$ and $Z^-(r)=\cup_r B^-(r)$. 
We define 
 
$$
\xi^k_N(t)=\frac{1}{b_N}\sum_{1\le i\le Nt\atop i\in Z^+  }[X_i-a_N] 
$$
and
$$
\eta^k_N(t)=\frac{1}{b_N}\sum_{1\le i\le Nt\atop i\in Z^-  }[X_i-a_N] 
$$
so that $\xi_N(t)=\xi^k_N(t)+\eta_N^k(t)$. It follows from Lemma \ref{lem6.2} 
considered with
$X_i$'s in place of $Y_i(\theta)$'s (as the proof in this lemma does not rely 
on a specific monomial form of summands there) that for any $\epsilon>0$,
\begin{equation}\label{A7}
\lim_{k\to\infty} \limsup_{N\to\infty}P[\sup_{0\le t\le T} |\eta_N^k(t)|\ge 
\epsilon]=0
\end{equation}
It is enough to show that  for fixed $k$ the limit theorem is  valid for 
$\xi_N^k(t)$ as $N\to\infty$. We can then let $k\to\infty$ similarly to the 
end of Section \ref{sec6}.
We denote the block sums by $Y_i=\sum_{j\in B^+(i)} X_j$ and observe that 
$Y_1, Y_2,...$ are
i.i.d. by the $\ell$-dependence and stationarity.  Then
\begin{equation}\label{A6}
\xi_N^k(t)=\frac{1}{b_N}\sum_{j: j(k+\ell-1)\le Nt} [Y_j-ka_N]+\frac{1}{b_N}
\sum_{i\in I(t)}[X_i-a_N]=\zeta^k_N(t)+R^k_N(t)
\end{equation}
where $I(t)$ is an incomplete block at the end.  If we want to show convergence of finite dimensional distributions to the L\'{e}vy process given by (\ref{A4}) we need to show that
$$
\lim_{k\to\infty}\lim_{N\to\infty} \frac{N}{k}E[f(\frac {X_1+\cdots+X_k }{b_N})]=\int_H f(x) M^\ast(dx).
$$
In order to complete the proof of finite dimensional convergence we need to 
check  only that  for each fixed  $t$ the second term in (\ref{A6}) is 
negligible in probability. Since $k$ is fixed and $N\to\infty$ this is obvious.  The tails behavior of $\frac{X_1+\cdots+X_k}{b_N}$ is given by the image of $M^k$ by the map $x_1+x_2+\cdots+x_k\to x$ of $H^k\to H$.
Since $M^k=\sum_A M^k_A$ and they are $0$ unless $sp(A)\le \ell$, averaging 
over translations of $A$, ignoring a few terms at the ends, produces as 
$k\to\infty$, $M^\ast$ for the L\'{e}vy measure. The effect of the gap is a 
factor of $\frac{k}{k+\ell-1}$
that multiplies $M^\ast$  which tends to $1$ as $k\to\infty$.   Any partial 
block consists  of a sum of at most $k$ terms of $\frac{X_i}{b_N}$ and is 
negligible for large $N$. So is $\frac{a_N}{b_N}$.
 
 Next, we need to verify the centering. We use the truncated mean 
 $\frac{x}{1+|x|^2}$ 
which then appears in the representation as a counter term in the integrand
\begin{equation}\label{A21}
\int\big[e^{i<\xi,x>}-1-\sum_j \frac{ i\,\xi_j\,x_j}{1+|x_j|^2}\big] 
M^{\widehat A}_A(dx)
\end{equation}
But when we push forward  the measure $M^{\widehat A}$ from $H^{\widehat A}
\to H$ by taking the sum $y=\sum_{i\in A} x_i$ we end up with
\begin{align}
\int\big[e^{i\,\xi\,y}&-1- \frac{ i\,\xi\,y}{1+|y|^2}\big] M^{r,
\{\sigma_i\}}(dy)\notag\\
\qquad&=\int \big[e^{i\,\xi\,y}-1-\frac{ i\,\xi\,\sum_i x_i}{1+
|\sum_i x_i|^2}\big] M^{\widehat A}_A(dx)\label{A22}
\end{align}
The difference between the two  counter terms  in (\ref{A21}) and (\ref{A22}) 
is $\gamma_{r,\{\sigma_i\}}$ and it adds up to
$$
\gamma=\sum_{r,\{\sigma_i\}}\gamma_{r,\{\sigma_i\}}
$$
which appears outside of the integral in (\ref{A8}) and does not influence
the limiting measure $M^*$.

Now, if the condition (\ref{A3}) is satisfied then convergence in the $J_1$ 
topology follows from Proposition \ref{prop5.1} obtained in \cite{TK} as a 
corollary of a more general result but we will still give an alternative 
direct proof below. Since convergence of finite dimensional distributions
was already obtained above it remains to establish tightness of the processes
 $\xi_N^k,\, N\geq 1$ and the conclusion of the proof is similar to Section 
 \ref{sec6} by letting $k\to\infty$. Observe though that the convergence of
  finite dimensional distributions was established above under the condition
 (\ref{A1}) while we claim that (\ref{A3}) already implies (\ref{A1}),
 and so we discuss this issue first.

Set $m=$dim$H$ and let $Z_i=(X_j,\, j\in B^+(i))$ be $mk$-dimensional random
vectors whose $m$-dimensional components are $X_j$'s and their sum amounts to
$Y_i$. The random vectors $Z_i,\, i=1,2,...$ are i.i.d. and in view of 
(\ref{A3}) we conclude by Lemma \ref{lem4.1} that they have regularly varying 
tails and, in particular, that (\ref{A1}) holds true. Thus by \cite{Rv} 
(or by Section 7.2 in \cite{Re}) the vector process
\[
\Psi^k_N(t)=\frac 1b_N\sum_{i:\, i(k+\ell-1)\leq Nt}\big (X_j-a_N,\, j\in 
B^+(i)\big)
\]
converges weakly in the $J_1$ topology as $N\to\infty$ to a vector L\' evy
process. Considering the linear map $(x_1,...,x_k)\longrightarrow x_1+\cdots
+x_k$ of $H^k$ to $H$ we see by Remark \ref{remA3} that 
\[
\zeta^k_N(t)=\frac 1{b_N}\sum_{i:\, i(k+\ell-)\leq Nt}(Y_i-ka_N)
\]
also converges weakly in the $J_1$ topology as $N\to\infty$ to a corresponding
L\' evy process which implies also tightness of the sequence of processes
$\{\zeta^k_N,\, N\geq 1\}$.

Now, we just need to make sure that  the summation that has  been carried out
 over blocks    $Y=X_1+\cdots+X_k$ will still allow us to derive tightness in
  the $J_1$ topology of the processes $\xi_N^k$ which amounts to boundedness
   and modulus of continuity estimates (see, for instance,
\cite{JS}, Theorem 3.21 in Ch.VI). The $J_1$ tightness of processes 
$\{\zeta^k_N,\, N\geq 1\}$ explained above yields estimates of $D[0,T]$ 
modulus of continuity of these processes in the following form.

For any $\epsilon>0,\eta>0$, there is a $\theta>0$  and a set  $\Delta(N,\theta,
 \epsilon)$ such that 
$$
\limsup_{N\to\infty} P[\Delta(N, \theta,\epsilon)]<\eta
$$
and on the complement $[\Delta(N,\theta,\epsilon)]^c$, given any $u,v$ with 
$|u-v|<\theta$, either
\begin{equation}\label{A11}
\sup_{\frac{Nu}{k_\ell}\le j\le\frac{Nv}{k_\ell}}|\frac{1}{b_N}\sum_{\frac{Nu}
{k_\ell}\le r \le j} (Y_r-ka_N)|<\epsilon
\end{equation}
or there is an integer $q$ such that $Nu\le q\le Nv$ and   both
\begin{equation}\label{A12}
\sup_{\frac{Nu}{k_\ell}\le j\le q-1}|\frac{1}{b_N}\sum_{\frac{Nu}{k_\ell}
\le r \le j} (Y_r-ka_N)|<\epsilon
\end{equation}
and
\begin{equation}\label{A13}
\sup_{q+1\le j\le \frac{Nv}{k_\ell}}|\frac{1}{b_N}\sum_{q+1\le r \le j} 
(Y_r-ka_N)|<\epsilon
\end{equation}
where for brevity we set $k_\ell=k+\ell-1$.
We need a similar estimate for $\xi^k_N(t)$ the  process of partial sums  
of $\{\frac{1}{b_N}[X_i-a_N]\}$.
From (\ref{A3}), we can assume that for any $\eta>0,\delta>0$, there is a 
$\theta>0$ such that 
$$
\limsup_{N\to\infty} P[F(N,\theta,\delta)]<\eta
$$
where 
$$
F(N,\theta,\delta)=\cup_{i,j:|i-j|\le N\theta}\{||X_i-a_N]\ge \delta b_N\ \&\  
|X_j-a_N|\ge \delta b_N\}.
$$
Suppose $Y=X_1+\cdots X_k$ is the sum over a block. On $[F_{N,\theta,
\delta}]^c$,
if some $|X_i-a_N|\ge \delta b_N$, then $|X_j-a_N|\le \delta b_N $ for 
$j\not=i$ and therefore
$$
|X_i-a_N|\le |Y-ka_N|+\sum_{j: j\not=i}  |X_j-a_N|\le |Y-ka_N|+(k-1)\delta b_N
$$
In particular, if $ |Y-ka_N|\le \epsilon b_N$ then $\sup_{1\le i\le k}|X_i-a_N|
\le ((k-1) \delta+\epsilon) b_N$. On the other hand, if $ |Y-ka_N|\ge \epsilon 
b_N$ then  $|X_i-a_N|\ge \frac{\epsilon}{k} b_N$  for some $i$. If $\delta\le 
\frac{\epsilon}{k}$ then $|X_i-a_N|\ge \delta b_N$ and on $[F(N,\theta,
\delta)]^c$, for $j\not=i$, $|X_j-a_N|\le \delta b_N$. 

Now we can estimate the $D[0,T]$  modulus of continuity of the $\xi^k_N$ 
process. Let $\theta $ be small enough and $|i-j|<N\theta$. Then there are 
blocks $B^+(r_i)$ and $B^+(r_j)$ to which $i$ and 
$j$ belong. We consider the rescaled partial sums of the process $Y_r$ in the 
interval  $r_i\le r  \le r_j$. 
We restrict ourselves  to the set $[\Delta(N,\theta,\epsilon)]^c$. Suppose the 
alternative (\ref{A11}) holds. In particular $|Y_r-ka_N|\le \epsilon b_N $ for 
every $r$ in the range. Any $X_q$ 
belonging to any of the blocks will satisfy $\frac{1}{b_N}|X_q-a_N|\le 
(\epsilon+(k-1)\delta)$. Therefore the analog of (\ref{A11}) holds and
\begin{equation}\label{A14}
\sup_{i \le q\le j}|\frac{1}{b_N}\sum_{i\le r \le q} (X_q-a_N)|<\epsilon+k 
(\epsilon+(k-1)\delta)=f_k(\epsilon,\delta)
\end{equation}
which goes to $0$ with $\epsilon$ and $\delta$.

Suppose the alternatives (\ref{A12}) and (\ref{A13}) hold. This provides us a 
 block $B^+(q)$. Suppose $\frac{1}{b_N}|Y_q-ka_N|<\epsilon$ we are in the 
 previous situation of (\ref{A11})
with $3\epsilon$ replacing $\epsilon$. If $\frac{1}{b_N}|Y_q-ka_N|>\epsilon$ 
 and  $\delta<\frac{\epsilon}{k}$, there is a $q\prime$ in the block with  
 $\frac{1}{b_N}|X^\prime_q-a_N|\ge \delta b_N$.  On $[F(N,\theta,\delta)]^c$, 
 for any $q^{\prime\prime}\not= q^\prime$ in $B^+(q)$, 
$\frac{1}{b_N}|X^{\prime\prime}_q-a_N|\le \delta b_N$, and combined with 
(\ref{A12}) and (\ref{A13}) this proved the modulus of continuity estimate for
 the process $\xi^k_N(\cdot)$ in the $J_1$ topology. Now the tightness follows 
 (see Theorem 3.21, Ch.VI in \cite{JS})
since we obtain also uniform boundedness in probability of processes $\xi^k_N$
 from the corresponding result for normalized sums $\zeta^k_N$ of independent 
 blocks together with the above estimates. 
 
 As mentioned above, letting $k\to\infty$ similarly to the end of Section 
 \ref{sec6} we obtain weak convergence in the $J_1$ topology of processes
 $\xi_N$ to a L\' evy process with the measure $M^*$. Finally, we observe
 that under the condition (\ref{A3}) the right hand side of (\ref{A4})
 contains only $r=1$, and so $M^*=M$, completing the proof.
 \end{proof}

\end{document}